\newtheorem{theorem}{Theorem}[section]
\newtheorem{lemma}[theorem]{Lemma}
\newtheorem{corollary}[theorem]{Corollary}
\newtheorem{fact}[theorem]{Fact}
\theoremstyle{definition}
\newtheorem{definition}[theorem]{Definition}
\newtheorem{proposition}[theorem]{Proposition}
\theoremstyle{remark}
\newtheorem{example}[theorem]{Example}
\begin{document}

\title
{Weak countability axioms on the quotient spaces of topological gyrogroups}

\author{Ying-Ying Jin}\thanks{}
\address{(Y.-Y. Jin) General Education Department, Guangzhou Polytechnic University, Guangzhou 511483, P.R. China} \email{yingyjin@163.com, jinyy@gzpyp.edu.cn}

\author{Yi-Ting Wang}\thanks{}
\address{(Y.-T. Wang) School of Mathematics and Computational Science, Wuyi University, Jiangmen 529020, P.R. China} \email{2205616228@qq.com}

\author{Li-Hong Xie*}\thanks{* The corresponding author.}
\address{(L.-H. Xie) School of Mathematics and Computational Science, Wuyi University, Jiangmen 529020, P.R. China} \email{yunli198282@126.com}

%\author{Hanbiao Yang}
%\address{(H. Yang) School of Mathematics and Computational Science, Wuyi University, Jiangmen 529020, P.R. China} \email{hongsejulebu@sina.com}

\thanks{
This work is supported by the Natural Science Foundation of Guangdong
Province under Grant (Nos. 2021A1515010381; 2020A1515110458). The Innovation Project of Department of Education of Guangdong Province (No. 2022KTSCX145), and the Natural Science Project of Jiangmen City (No. 2021030102570004880), Scientific research project of Guangzhou Panyu Polytechnic  (No. 2022KJ02)}

\subjclass[2010]{primary 54H99; secondary 54D30, 54D45, 54D20, 	54B15, 54C10}

\keywords{Topological gyrogroup; Quotient space; Neutral subgyrogroup; Weak countability axiom}

\begin{abstract}
In this paper, we mainly prove that if $H$ is a
closed strong subgyrogroup of a strongly
topological gyrogroup $G$ and $H$ is neutral,
then
  (1) \( G/H \) is biradial if and only if \( G/H \) is nested;
  (2) \( G/H \) is metrizable if and only if \( G/H \) is a biradial space with countable pseudocharacter;
  (3) \( G/H \) is metrizable if and only if \( G/H \) has countable $cn$-character, given that \( G/H \) has the Baire property.
\end{abstract}

\maketitle

\section{Introduction}

Indeed, a topological group \( G \) is a group equipped with a topology that ensures the continuity of both its multiplication operation and the operation of taking inverses within \( G \). Moving forward, let us examine a topological group \( G \) and a closed subgroup \( H \) of \( G \). It is acknowledged that the quotient space \( G/H \) does not maintain the structure of a topological group.
A space $X$ is designated as a coset space if it is homeomorphic to a quotient space $G/H$ formed by the left cosets of a topological group $G$, where $H$ is a closed subgroup. It is important to note that a coset space does not necessarily resemble a topological group in its structure.
Although coset spaces themselves might not be considered topological groups, they exhibit homogeneity and Tychonoff properties. According to Bourbaki \cite{Bou}, every homogeneous, zero-dimensional compact Hausdorff space is a coset space.
As a natural extension of topological groups, it is logical to explore which results that hold for topological groups can be generalized to coset spaces.
Fern¨¢\'{a}ndez, S\'{a}nchez, and Tkachenko \cite[Corollary 3.6]{Fer} demonstrated that for a broad class of coset spaces, metrizability is equivalent to first-countability.
They proved the following:
Let \( H \) be a closed neutral subgroup of a topological group \( G \). Then \( G/H \) is metrizable if and only if \( G/H \) is first-countable.

Birkhoff-Kakutani has proved in \cite[Theorem 3.3.12]{Arha} that
a topological group $G$ is metrizable if and only if $G$ is first-countable.
Based on the Birkhoff-Kakutani Theorem, Malykhin put forward a natural question regarding the extent to which first-countability can be weakened (refer to
\cite{Arha2, Mo, Va}). This problem has drawn considerable attention from numerous scholars. These scholars have delved into various forms of weak first-countability in topological groups, including weakly first-countable spaces, bisequential spaces, csf-countable spaces, and Fr\'{e}chet-Urysohn spaces
\cite{Arha3, Ba1, Gab, Liu, No, Ny, Sh}.
It is worth noting that Hru$\check{s}$\'{a}k and Ramos-Garc\'{\i}a \cite[Theorem 1.6]{Hr} built a model of ZFC. In this model, every separable Fr\'{e}chet-Urysohn topological group is metrizable. As a result, they resolved a problem that Malykhin had presented in 1978.
Inspired by Fern¨¢\'{a}ndez, S\'{a}nchez, and Tkachenko's result, Ling et al. studied the weak countability axioms of coset spaces \cite{Ling}.

Gyrogroups, being a generalization of groups, exhibit a weaker algebraic structure compared to groups. Their foundational principles are largely rooted in the concept of relativistically admissible velocity space \cite{Ung}. Notably, gyrogroups have found applications across a diverse array of disciplines including mathematical physics \cite{Pa}, group theory \cite{Fo}, and abstract algebra \cite{Suk} and so on.
In 2017, Atiponrak introduced the theory of topological gyrogroups, presenting them as an extension of topological groups \cite{Atip}. A pivotal contribution came in 2019 from Cai, Lin, and He \cite{Cai}, who demonstrated that every topological gyrogroup is rectifiable, thereby establishing that any Hausdorff topological gyrogroup is metrizable if and only if it satisfies the first-countability property. This finding provided a positive resolution to a problem posed in \cite{Atip}.
Zhang et al.
in \cite{zh} proved that if $(G, \tau, \oplus)$ is a strongly topological gyrogroup and
$H$ is a closed strong subgyrogroup of $G$, then $G/H$ is $\kappa$-Fr\'{e}chet-Urysohn if and only
if $G/H$ is strongly $\kappa$-Fr\'{e}chet-Urysohn under the condition that $H$ is neutral.
Liu and Zhang \cite{Liu1} demonstrated that metrizability and first-countability are equivalent for a broad class of quotient spaces.
By examining neutral subgyrogroups, they derived the following metrization theorem for quotient spaces,
which extends the Fern¨¢\'{a}ndez, S\'{a}nchez, and Tkachenko's result to a wider range of topological spaces.

\begin{theorem}\cite{Liu1}\label{the1.1s}
Let $H$ be a closed strong subgyrogroup of a strongly topological gyrogroup $(G, \tau, \oplus)$. If $H$ is
neutral, then $G/H$ is metrizable if and only if $G/H$ is first-countable.
\end{theorem}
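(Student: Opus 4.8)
The plan is to prove the two implications separately, noting that the forward direction is immediate: every metrizable space is first-countable, so it remains only to show that first-countability of $G/H$ forces metrizability. The overall strategy is to reconstruct the Birkhoff--Kakutani machinery in the gyrogroup setting --- producing a well-behaved continuous left-invariant pseudometric on $G$ --- and then to push this pseudometric down to a genuine metric on $G/H$ that induces the quotient topology, with the neutrality of $H$ being the hypothesis that makes the descent possible. I emphasize that one cannot simply metrize $G$ first, since $G/H$ being first-countable does not make $G$ first-countable; the pseudometric on $G$ must be built from data pulled back through the quotient map.

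First I would exploit the quotient map $\pi\colon G\to G/H$, $\pi(a)=a\oplus H$, which is open and continuous. Assuming $G/H$ is first-countable, fix a countable decreasing base $\{W_n\}_{n\in\omega}$ at the point $0\oplus H$ and pull it back to the decreasing sequence of $H$-saturated open neighborhoods $\pi^{-1}(W_n)$ of $0$ in $G$. Using that $G$ is a \emph{strongly} topological gyrogroup, I would then choose inside these a decreasing sequence $\{U_n\}_{n\in\omega}$ of open, symmetric, gyration-invariant neighborhoods of $0$ (so $\mathrm{gyr}[a,b](U_n)=U_n$ for all $a,b\in G$ and $U_n\subseteq\pi^{-1}(W_n)$) satisfying the chain condition
\begin{equation*}
U_{n+1}\oplus U_{n+1}\oplus U_{n+1}\subseteq U_n .
\end{equation*}
Openness of $\pi$ then guarantees that $\{\pi(U_n)\}=\{U_n\oplus H\}$ is a base at $0\oplus H$. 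At this stage I would invoke neutrality: for each $n$ there should be an $m>n$ with $H\oplus U_m\subseteq U_n\oplus H$, which couples the left and right behaviour of $H$ against the chain and is exactly the ingredient absent in the bare gyrogroup structure.

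Next I would run the prenorm construction on $G$. From the chain $\{U_n\}$ the Birkhoff--Kakutani lemma yields a continuous, left-invariant pseudometric $d$ on $G$ with balls squeezed between consecutive members of the chain, that is,
\begin{equation*}
U_{n+1}\subseteq\{\,x : d(x,0)<2^{-n}\,\}\subseteq U_n ;
\end{equation*}
here the gyration-invariance of the $U_n$ is precisely what is needed so that the left gyroassociative law and the loop property do not obstruct the verification of the triangle inequality, since the gyrations then act as isometries of $d$. I would then define the candidate quotient metric by
\begin{equation*}
\widehat d(a\oplus H,\,b\oplus H)=\inf\{\,d(x,y): x\in a\oplus H,\ y\in b\oplus H\,\},
\end{equation*}
and verify that $\widehat d$ is nonnegative, symmetric, and well defined on cosets. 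That $\widehat d$ separates distinct cosets uses that $H$ is closed, and the triangle inequality for $\widehat d$ follows from the one for $d$ together with left-invariance.

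The main obstacle, and the crux of the argument, is to show that $\widehat d$ induces precisely the quotient topology on $G/H$ --- equivalently, that the $\widehat d$-balls around $0\oplus H$ coincide up to refinement with the sets $\pi(U_n)=U_n\oplus H$. One inclusion is easy from the continuity of $\pi$ and of $d$; the reverse inclusion is where the neutrality-derived condition $H\oplus U_m\subseteq U_n\oplus H$ is indispensable, since without it the infimum defining $\widehat d$ over the left cosets need not be controlled by a single chain of neighborhoods, and $\widehat d$ could fail to be a metric or could generate a strictly coarser topology. Carefully tracking the saturated sets $U_n\oplus H$ through the gyration-invariant chain and using the openness of $\pi$ should close this gap and establish that $\widehat d$ metrizes $G/H$, completing the nontrivial implication.
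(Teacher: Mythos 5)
First, a point of reference: this paper does not prove Theorem~1.1 at all --- it is imported verbatim from Liu--Zhang \cite{Liu1}, so your attempt can only be compared against that source (and the Fern\'andez--S\'anchez--Tkachenko argument it adapts). Your architecture agrees with that proof up to a point: pull a countable base $\{W_n\}$ at $0^*$ back through $\pi$, build a decreasing chain of symmetric, gyr-invariant open neighborhoods $U_n$ with $U_{n+1}\oplus U_{n+1}\oplus U_{n+1}\subseteq U_n$, and couple $H$ to the chain via neutrality. One small but real repair is needed already here: since $\{U_n\}$ is \emph{not} a neighborhood base at $0$ in $G$ (that is the whole point --- $G$ itself need not be first-countable), you cannot ``invoke'' neutrality afterwards to find $m$ with $H\oplus U_m\subseteq U_n\oplus H$; the condition $H\oplus U_{n+1}\subseteq U_n\oplus H$ must be woven into the inductive choice of $U_{n+1}$, which is exactly what the cited proof does. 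After this stage, however, the cited proof does not write down a metric by an infimum formula; it verifies a metrization criterion for $G/H$ (a countably based compatible uniformity, equivalently a Frink-type condition on the neighborhood assignment $x\oplus H\mapsto \pi(x\oplus U_n)$), and that difference is not cosmetic.

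The genuine gap is your claim that the triangle inequality for $\widehat d(a\oplus H,b\oplus H)=\inf\{d(x,y):x\in a\oplus H,\ y\in b\oplus H\}$ ``follows from the one for $d$ together with left-invariance.'' This inference is false: for \emph{left} cosets, left-invariance reduces $\widehat d$ to the distance from the identity to a \emph{double} coset, $\widehat d(a\oplus H,b\oplus H)=\mathrm{dist}_N\bigl(0,H\oplus(\ominus a\oplus b)\oplus H\bigr)$, and distances to double cosets need not satisfy the triangle inequality. A counterexample already inside the scope of the theorem (trivial gyrations, so every subgroup is strong, and every subgroup of a discrete group is closed and neutral): let $G$ be the free group on $a,b$ with the word-length metric $d$ (left-invariant, compatible), $H=\langle a\rangle$, and take the cosets $H$, $bH$, $ba^{10}b^{-1}H$; then $\widehat d(H,bH)=1$ and $\widehat d(bH,ba^{10}b^{-1}H)=1$, but $\widehat d(H,ba^{10}b^{-1}H)=12$. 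In the group case one fixes this by using a \emph{right}-invariant metric for left cosets, but in a gyrogroup that escape route is blocked: $x\mapsto\ominus x$ does not convert left-invariance into right-invariance, because $(x\oplus z)\ominus(y\oplus z)$ is not a gyration image of $x\ominus y$. What your chain (with neutrality built in) actually yields, via the left gyroassociative law and gyr-invariance, is only the weak implication: if $\widehat d(xH,yH)<2^{-m}$ and $\widehat d(yH,zH)<2^{-m}$ then $\ominus x\oplus z\in(U_{m-1}\oplus H)\oplus(U_{m-1}\oplus H)\subseteq U_{m-3}\oplus H$, i.e.\ a Frink--Chittenden-type uniform triangle condition for a symmetric, not a metric. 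That is in fact enough to conclude metrizability, but only after invoking an additional metrization theorem (Frink's lemma, or equivalently the metrization theorem for uniformities with countable base) --- which is precisely the step the actual proof supplies and your proposal is missing. As written, your argument stops at a symmetrizable structure and does not reach a metric.
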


In this paper, we mainly consider
weak countability axioms of the quotient spaces of strongly topological gyrogroups.
All spaces in this paper are Hausdorff unless stated otherwise,
the symbols $\omega$ and $N$ denote the first infinite ordinal and
the set of all positive integers respectively.

\section{Definitions and preliminaries}
This section contains necessary definitions and background of gyrogroups and paratopological (topological) gyrogroups.
Their basic algebraic and topological properties are included as well.
For notation and terminology not explicitly mentioned here, readers are encouraged to refer to \cite{Arha, En89}.

Let $G$ be a nonempty set, and let $\oplus  : G  \times G \rightarrow G $ be a binary operation on $G $. Then the pair $(G, \oplus)$ is
called a {\it groupoid.}  A function $f$ from a groupoid $(G_1, \oplus_1)$ to a groupoid $(G_2, \oplus_2)$ is said to be
a groupoid homomorphism if $f(x_1\oplus_1 x_2)=f(x_1)\oplus_2 f(x_2)$ for any elements $x_1, x_2 \in G_1$.  In addition, a bijective
groupoid homomorphism from a groupoid $(G, \oplus)$ to itself will be called a groupoid automorphism. We will write $Aut (G, \oplus)$ for the set of all automorphisms of a groupoid $(G, \oplus)$.

The notion of a gyrogroup was first identified by Ungar during his research on Einstein's velocity addition \cite{Ung}.

\begin{definition}\cite[Gyrogroups]{Ung}\label{Def:gyr}
 Let $(G, \oplus)$ be a nonempty groupoid. We say that $(G, \oplus)$ or just $G$
(when it is clear from the context) is a gyrogroup if the followings hold:
\begin{enumerate}
\item[($G1$)] There is an identity element $0 \in G$ such that
$$0\oplus x=x=x\oplus 0\text{~~~~~for all~~}x\in G.$$
\item[($G2$)] For each $x \in G $, there exists an {\it inverse element}  $\ominus x \in G$ such that
$$\ominus x\oplus x=0=x\oplus(\ominus x).$$
\item[($G3$)] For any $x, y \in G $, there exists an {\it gyroautomorphism} $\text{gyr}[x, y] \in Aut(G,  \oplus)$ such that
$$x\oplus (y\oplus z)=(x\oplus y)\oplus \text{gyr}[x, y](z)$$ for all $z \in G$;
\item[($G4$)] For any $x, y \in G$, the equation $\text{gyr}[x \oplus y, y]=\text{gyr}[x, y]$ holds.
\end{enumerate}
\end{definition}

\begin{definition}\cite{Ung}\label{defbox}
Let $(G,\oplus)$ be a gyrogroup with gyrogroup operation (or,
addition) $\oplus$. The gyrogroup cooperation (or, coaddition) $\boxplus$ is a second
binary operation in $G$ given by the equation
$a\boxplus b=a\oplus \text{gyr}[a,\ominus b]b$ for all $a, b\in G$.
The groupoid $(G, \boxplus)$ is called a cogyrogroup, and is said to
be the cogyrogroup associated with the gyrogroup $(G, \oplus)$.

Replacing $b$ by $\ominus b$, we have the identity
$a\boxminus b=a\ominus \text{gyr}[a,b]b$ for all $a, b\in G$, where we use the obvious notation, $a\boxminus b = a\boxplus(\ominus b)$.
\end{definition}

Theorem \ref{the1.3} below summarizes some fundamental algebraic properties of gyrogroups.

\begin{theorem}\cite{Ung}\label{the1.3}
Let $(G, \oplus)$ be a gyrogroup. Then, for any $a, b, c\in G$ we have
\begin{enumerate}
\item[(1)] $(a\oplus b)\oplus c=a\oplus(b\oplus \text{gyr}[b, a]c);$\hfill{Right Gyroassociative Law}
\item[(2)] $a\oplus (b\oplus c)=(a\oplus b)\oplus \text{gyr}[a, b]c;$\hfill{Left Gyroassociative Law}
\item[(3)] $\text{gyr}[a, b]=\text{gyr}[a, b\oplus a];$\hfill{Right Loop Property}
\item[(4)] $\text{gyr}[a, b]=\text{gyr}[a\oplus b, b];$\hfill{Left Loop Property}
\item[(5)] $(\ominus a)\oplus(a\oplus b)= b$;
\item[(6)] $(a\ominus b)\boxplus b= a$;
\item[(7)] $(a\boxminus b)\oplus b= a$;
\item[(8)] $\text{gyr}[a, b](c)=\ominus(a\oplus b)\oplus (a\oplus (b\oplus c))$;
\item[(9)] $\ominus(a\oplus b)=\text{gyr}[a, b](\ominus b\ominus a)$;\hfill{Gyrosum Inversion}
\item[(10)] $\text{gyr}[a, b](\ominus x)=\ominus \text{gyr}[a, b]x$;
\item[(11)] $\text{gyr}^{-1}[a, b]=\text{gyr}[b, a]$; \hfill{Inversive symmetry}
\item[(12)] $\ominus(a\boxplus b)= (\ominus b)\boxplus(\ominus a)$; \hfill{The Cogyroautomorphic Inverse Theorem}
\item[(13)] $\text{gyr}[\ominus a, \ominus b]=\text{gyr}[a, b]$; \hfill{Even symmetry}
\item[(14)] $\text{gyr}[a, 0]=\text{gyr}[0, b]=\text{id}_G$.
\end{enumerate}
\end{theorem}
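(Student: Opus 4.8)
The plan is to derive all fourteen identities from the axioms $(G1)$--$(G4)$ together with the defining relation $a\boxplus b=a\oplus\mathrm{gyr}[a,\ominus b]b$, using left cancellation as the central tool. Items $(2)$ and $(4)$ are nothing but the left gyroassociative law $(G3)$ and the left loop property $(G4)$, so they need no argument. The first genuine step is left cancellation: if $a\oplus x=a\oplus y$, then left-adding $\ominus a$ and applying $(G3)$ gives $\mathrm{gyr}[\ominus a,a]x=\mathrm{gyr}[\ominus a,a]y$, whence $x=y$ since $\mathrm{gyr}[\ominus a,a]$ is a bijection. From here I would read off the easy identities in order.

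First I would establish $(14)$: putting $a=0$, respectively $b=0$, in $(G3)$ and cancelling gives $\mathrm{gyr}[0,b]=\mathrm{id}_G$ and $\mathrm{gyr}[a,0]=\mathrm{id}_G$. Combining $(G4)$ with $\mathrm{gyr}[0,a]=\mathrm{id}_G$ yields $\mathrm{gyr}[\ominus a,a]=\mathrm{gyr}[\ominus a\oplus a,a]=\mathrm{gyr}[0,a]=\mathrm{id}_G$, and substituting this into $\ominus a\oplus(a\oplus b)=(\ominus a\oplus a)\oplus\mathrm{gyr}[\ominus a,a]b$ proves $(5)$. Left-adding $\ominus(a\oplus b)$ to $(G3)$ and cancelling via $(5)$ gives the gyrator identity $(8)$. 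Identity $(10)$ is immediate because $\mathrm{gyr}[a,b]$ is an automorphism of $(G,\oplus)$, hence fixes $0$ and sends $\ominus x$ to $\ominus\mathrm{gyr}[a,b]x$; the same cancellation argument gives $\ominus(\ominus a)=a$. With the double-inverse law in hand, $(9)$ follows from the telescoping computation $(a\oplus b)\oplus\mathrm{gyr}[a,b](\ominus b\ominus a)=a\oplus(b\oplus(\ominus b\ominus a))=a\ominus a=0$, so $\mathrm{gyr}[a,b](\ominus b\ominus a)$ is the inverse of $a\oplus b$.

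The heart of the proof, and the step I expect to be the main obstacle, is the pair of gyration symmetry laws $(11)$ and $(13)$. Two clean nested cancellations already get close. Expanding $a\oplus(b\oplus(\ominus b\oplus(\ominus a\oplus x)))$ in two ways and invoking $(9)$ yields the operator identity $\mathrm{gyr}[a,b]\,\mathrm{gyr}[\ominus b,\ominus a]=\mathrm{id}_G$; expanding $\ominus a\oplus(\ominus b\oplus(b\oplus(a\oplus x)))$ and simplifying the resulting inner gyration by the left loop property $(4)$ yields $\mathrm{gyr}[a,b\oplus a]\,\mathrm{gyr}[b,a]=\mathrm{id}_G$. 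Together these show that the three statements $(3)$ (right loop), $(11)$ (inversive symmetry) and $(13)$ (even symmetry) are mutually equivalent, but they do not by themselves close the loop: one still needs an independent proof of any one of them, and naive substitutions are circular. This is the delicate point. I would resolve it following Ungar's original loop-property argument \cite{Ung}, aided by the conjugation identity $\tau\,\mathrm{gyr}[a,b]\,\tau^{-1}=\mathrm{gyr}[\tau a,\tau b]$, valid for every $\tau\in Aut(G,\oplus)$, which is itself a direct consequence of the gyrator identity $(8)$.

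Once $(11)$ and $(13)$ are secured, the remaining items are short. The right gyroassociative law $(1)$ comes from $(2)$ upon replacing $\mathrm{gyr}[a,b]\,\mathrm{gyr}[b,a]$ by $\mathrm{id}_G$ via $(11)$, and the right loop property $(3)$ then follows by applying $(11)$ and the left loop property $(4)$ in succession. For the cooperation identities I would unfold $a\boxplus b=a\oplus\mathrm{gyr}[a,\ominus b]b$: for $(6)$ the left loop property gives $\mathrm{gyr}[a\ominus b,\ominus b]=\mathrm{gyr}[a,\ominus b]$, so $(a\ominus b)\boxplus b=(a\ominus b)\oplus\mathrm{gyr}[a,\ominus b]b=a\oplus(\ominus b\oplus b)=a$ by $(2)$; $(7)$ is the dual computation using the right loop property $(3)$; and $(12)$ follows by applying gyrosum inversion $(9)$ and inversive symmetry $(11)$ to $\ominus(a\oplus\mathrm{gyr}[a,\ominus b]b)$. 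All of these closing steps are routine once the symmetry laws of the gyration are in place, which is precisely why those laws are the crux.
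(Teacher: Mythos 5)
A preliminary remark: the paper does not prove this theorem at all --- it is reproduced from Ungar's book \cite{Ung} as quoted background material, so there is no proof in the paper to compare yours against, and your attempt must be judged as a self-contained derivation. A substantial part of it is correct: the left-cancellation argument, items (2), (4), (5), (8), (9), (10), (14), the double-inverse law, item (6), and both of your operator identities check out. Indeed, expanding $a\oplus(b\oplus(\ominus b\oplus(\ominus a\oplus x)))$ two ways gives $\text{gyr}[a,b]\,\text{gyr}[\ominus b,\ominus a]=\text{id}_G$, and expanding $\ominus a\oplus(\ominus b\oplus(b\oplus(a\oplus x)))$, with the left loop property turning $\text{gyr}[\ominus b,b\oplus a]$ into $\text{gyr}[a,b\oplus a]$, gives $\text{gyr}[a,b\oplus a]\,\text{gyr}[b,a]=\text{id}_G$; these do establish that (3), (11) and (13) are mutually equivalent.

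The genuine gap is exactly the point you flag and then step around: you never prove any one of (3), (11), (13). Writing that you ``would resolve it following Ungar's original loop-property argument'' is a citation, not an argument --- and it is a citation of the very source whose theorem you are reconstructing, so as a blind proof the attempt is circular precisely at its crux. The auxiliary tool you invoke does not rescue it: the conjugation identity $\tau\,\text{gyr}[a,b]\,\tau^{-1}=\text{gyr}[\tau a,\tau b]$ is correct (it does follow from (8)), but the natural substitutions of available automorphisms only regenerate your two identities, gyrosum inversion, or further nested identities such as $\text{gyr}[a\oplus u,\text{gyr}[a,u]v]\,\text{gyr}[a,u]=\text{gyr}[a,u\oplus v]\,\text{gyr}[u,v]$ and $\text{gyr}[a,\ominus\text{gyr}[a,b]b]\,\text{gyr}[a,b]=\text{id}_G$; none of them yields inversive symmetry itself. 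Deriving the right loop property (equivalently (11) or (13)) from the axioms is where the real content of Ungar's theorem lies, and that chain of lemmas is simply absent from your write-up. Consequently items (1), (3), (7), (11), (12), (13) remain unproven. Note also that, even granting the symmetry laws, your closing one-liners are too optimistic: in (7) the computation is not the mirror image of (6) --- reassociating $(a\ominus\text{gyr}[a,b]b)\oplus b$ produces the gyration $\text{gyr}[a,\ominus\text{gyr}[a,b]b]$, which the right loop property alone does not dispatch --- and (12), the cogyroautomorphic inverse theorem, is a separate result with a substantive proof in \cite{Ung}, not a routine application of (9) and (11).
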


Generally, gyrogroups do not adhere to the associative property. However, they comply with the left and right gyroassociative laws, denoted as Proposition \ref{the1.3} (1) and (2), respectively.
Observe that a group is a type of gyrogroup $(G, \oplus)$ where \text{gyr}$[x, y]$ acts as the identity map for all $x, y$ in $G$. This indicates that gyrogroups serve as a natural extension of the concept of groups.
Specifically, gyrogroups that extend the principles of Abelian groups are assigned a distinct designation.

\begin{proposition} \label{the3.22s}
Let $(G, \oplus)$ be a  gyrogroup. Then
$a\boxplus b=b\oplus((\ominus b\oplus a)\oplus b)$
for all $a, b\in G$.
\end{proposition}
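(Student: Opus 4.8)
The plan is to start from the right-hand side and massage it into the definitional form of the cooperation, namely $a\boxplus b = a\oplus\text{gyr}[a,\ominus b]b$ from Definition \ref{defbox}. Writing $d:=\ominus b\oplus a$ for brevity, the goal becomes showing $b\oplus(d\oplus b)=a\oplus\text{gyr}[a,\ominus b]b$. The first move is to apply the left gyroassociative law (Theorem \ref{the1.3}(2)) to expand $b\oplus(d\oplus b)=(b\oplus d)\oplus\text{gyr}[b,d]b$. This cleanly separates the computation into two independent pieces: the ``base point'' $b\oplus d$ and the gyration $\text{gyr}[b,d]$, which I will match against $a$ and $\text{gyr}[a,\ominus b]$ respectively.

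For the base point I would show $b\oplus d=a$. Applying the left gyroassociative law once more, $b\oplus(\ominus b\oplus a)=(b\oplus\ominus b)\oplus\text{gyr}[b,\ominus b]a=0\oplus\text{gyr}[b,\ominus b]a$ by ($G2$), and the surviving gyration collapses to the identity: by the right loop property (Theorem \ref{the1.3}(3)) together with (14), $\text{gyr}[b,\ominus b]=\text{gyr}[b,\ominus b\oplus b]=\text{gyr}[b,0]=\text{id}_G$. Hence $b\oplus d=a$ by ($G1$), which is exactly the base point we need.

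The key step is the gyration identity $\text{gyr}[b,d]=\text{gyr}[a,\ominus b]$, and I expect this to be the main obstacle, since the gyration $\text{gyr}[b,\ominus b\oplus a]$ produced by the associative expansion does not visibly resemble the $\text{gyr}[a,\ominus b]$ demanded by the cooperation. The trick is to route both through the common intermediate $\text{gyr}[a,d]=\text{gyr}[a,\ominus b\oplus a]$ by invoking the two loop properties in opposite directions: using $b\oplus d=a$ from the previous step, the left loop property (Theorem \ref{the1.3}(4)) gives $\text{gyr}[b,d]=\text{gyr}[b\oplus d,d]=\text{gyr}[a,d]$, while the right loop property (Theorem \ref{the1.3}(3)) gives $\text{gyr}[a,\ominus b]=\text{gyr}[a,\ominus b\oplus a]=\text{gyr}[a,d]$. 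Both therefore equal $\text{gyr}[a,d]$ and so coincide. Substituting the two computed pieces back yields $b\oplus(d\oplus b)=(b\oplus d)\oplus\text{gyr}[b,d]b=a\oplus\text{gyr}[a,\ominus b]b=a\boxplus b$, completing the argument.
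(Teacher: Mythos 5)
Your proof is correct and follows essentially the same route as the paper's: both expand $b\oplus((\ominus b\oplus a)\oplus b)$ via the left gyroassociative law, identify the base point with $a$ by left cancellation, and convert $\text{gyr}[b,\ominus b\oplus a]$ into $\text{gyr}[a,\ominus b]$ using the left and right loop properties. The only cosmetic difference is that you re-derive the left cancellation law from scratch, whereas the paper simply cites it (Theorem \ref{the1.3}(5)).
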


\begin{proof}
For all $a, b\in G$ we have
\begin{align*}
b\oplus((\ominus b\oplus a)\oplus b)&=(b\oplus(\ominus b\oplus a))\oplus \text{gyr}[b,\ominus b\oplus a]b \quad \text{by the left gyroassociative law~}
\\&=a\oplus \text{gyr}[a,\ominus b\oplus a]b \quad \text{by a left cancellation and a left loop property~}
\\&=a\oplus \text{gyr}[a,\ominus b]b \quad \text{by a right loop property~}
\\&=a\boxplus b.  \quad\quad\quad \text{by Definition \ref{defbox}~}
\end{align*}
\end{proof}

\begin{definition}\cite[Proposition 14]{Suk3}
Assume that $(G, \oplus)$ is a gyrogroup and $H$ is a nonempty subset of $G$. Then $H$ is a \textit{subgyrogroup} iff $\ominus a \in H$ and $a \oplus b \in H$ for all $a, b \in H$.
\end{definition}

\begin{definition}\cite[L-subgyrogroup]{Suk3} Let $(G,\oplus)$ be a gyrogroup and $H$ a subgyrogroup of $G$. Then $H$ is an \textit{L-subgyrogroup}, if $\text{gyr}[a,h](H)=H$ holds for all $a\in G$ and $h\in H$.
\end{definition}
 Let $A$ be a subset of a gyrogroup $G$. We call that $A$ is {\it gyr-invariant} if $\text{gyr}[x,y]A=A$ for all $x,y\in G$. Furthermore, a subgyrogroup $H$ of a given gyrogroup $(G,\oplus)$ is said to be a \textit{strong subgyrogroup} (\cite[Definition 3.9]{B1}), if $H$ is gyr-invariant. Note that each strong subgyrogroup is an L-subgyrogroup.

The family of open neighborhoods of the
identity 0 in $G$ will be denoted by $\mathcal{U}$.
No separation restrictions on the topological spaces considered in this paper are imposed unless we mention them explicitly.
%Below $\psi(X)$, $\chi(X)$ and $l(X)$ denote the pseudocharacter, character and Lindel\"{o}f number of aspace $X$, respectively.
%Below $\chi(X)$ denotes the character of aspace $X$.
Moreover, the set of all positive integers denoted by $\mathbb{N}$ and the first infinite ordinal denoted by $\omega$.
We are prepared to present the definition of gyrogroups as follows.

\begin{definition}\cite{Atip}\label{def2.11}
A triple $(G, \tau,  \oplus)$ is called a {\it topological gyrogroup} if and only if
\begin{enumerate}
\item[(1)] $(G, \tau)$ is a topological space;
\item[(2)] $(G, \oplus)$ is a gyrogroup;
\item[(3)] The binary operation $\oplus:G \times G\rightarrow G$ is continuous where $G\times G$ is endowed with the product topology
and the operation of taking the inverse $\ominus(\cdot ) : G  \rightarrow G $, i.e. $x\rightarrow\ominus x$, is continuous.
\end{enumerate}
\end{definition}

If a triple $( G, \tau,  \oplus)$ satisfies the first two conditions and its binary operation is continuous, we call such
triple a {\it paratopological gyrogroup} \cite{Atip1}. Sometimes we will just say that $G$ is a topological gyrogroup (paratopological gyrogroup) if the binary operation and the topology are clear from the context.

\begin{example}\cite[Example 3]{Atip}\label{ex13}
Let's examine the set of all Einsteinian velocities, characterized as follows:
$$\mathbb{R}^3_{c} = \{\mathbf{v} \in \mathbb{R}^3 : \|\mathbf{v}\| < c\}.$$
In this context, $c$ represents the speed of light in a vacuum, and $\|\mathbf{v}\|$ denotes the Euclidean norm of a vector $\mathbf{v}$ in $\mathbb{R}^3$. This set, being a subset of a topological space (namely, $\mathbb{R}^3$ with its standard topology), naturally forms a topological space itself. We then introduce the operation of Einstein addition, $\oplus_E: \mathbb{R}^3_{c} \times \mathbb{R}^3_{c} \to \mathbb{R}^3_{c}$, for any vectors $\mathbf{u}, \mathbf{v}$ in $\mathbb{R}^3_{c}$, as
$$\mathbf{u} \oplus_E \mathbf{v} = \frac{1}{1 + \frac{\mathbf{u} \cdot \mathbf{v}}{c^2}} \left( \mathbf{u} \oplus \frac{1}{\gamma_{\mathbf{u}}} \mathbf{v} + \frac{1}{c^2} \frac{\gamma_{\mathbf{u}}}{1 + \gamma_{\mathbf{v}}} (\mathbf{u} \cdot \mathbf{v}) \mathbf{u} \right),$$
where $\mathbf{u} \cdot \mathbf{v}$ is the standard dot product of vectors in $\mathbb{R}^3$, and the gamma factor $\gamma_{\mathbf{u}}$ within the $c$-ball is defined by
$$\gamma_{\mathbf{u}} = \frac{1}{\sqrt{1 - \frac{\mathbf{u} \cdot \mathbf{u}}{c^2}}}.$$
Moreover, given the standard topology derived from $\mathbb{R}^3$, it is evident that the operation $\oplus_E$ is continuous.
Additionally, it has been established that the inverse of any $\mathbf{u}$ in $\mathbb{R}^3_c$ is $-\mathbf{u}$, thereby ensuring that the inversion operation is continuous as well. Thus, $\mathbb{R}^3_{c}$ constitutes a topological gyrogroup, distinguishing itself from being a topological group, a semitopological group, or a paratopological group, in a rephrased manner.
\end{example}

Assume that $A, B$ are subsets of a gyrogroup $(G, \oplus)$, then $A \oplus B$ and $\ominus A$ are defined by $A \oplus B = \{a \oplus b : a \in A, b \in B\}$ and $\ominus A = \{\ominus a : a \in A\} = (\ominus)^{-1}(A)$.
Next, we present the definition of strongly topological gyrogroups, which is crucial for the content of this paper.

\begin{definition}\cite{BL}\label{defst}
Let $G$ be a topological gyrogroup. We say that $G$ is a strongly topological gyrogroup if
for each neighborhood $U$ at the identity there exists a gyr-invariant neighborhood $V$ such that $V\subseteq U$.
\end{definition}
For a paratopological gyrogroup $( G, \tau,  \oplus)$, we called $( G, \tau,  \oplus)$ a {\it strongly paratopological gyrogroup} if
there exists a neighborhood base $\mathcal{U}$ at the identity element consisting of gyr-invariant subsets in $G$ \cite{JX3}.

\begin{proposition}\label{prop2.10s}
Let $(G,\oplus)$ be a gyrogroup and $V$ a gyr-invariant subset in $G$. Then $(W\oplus U)\oplus V=W\oplus (U\oplus V)$  holds for each
$W,U\subseteq G$.

\end{proposition}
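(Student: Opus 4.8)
The plan is to reduce this set identity to the elementwise right gyroassociative law (Theorem \ref{the1.3}(1)) and then exploit the gyr-invariance of $V$. Unwinding the definitions, the left-hand side is $(W\oplus U)\oplus V=\{(w\oplus u)\oplus v : w\in W,\ u\in U,\ v\in V\}$ while the right-hand side is $W\oplus(U\oplus V)=\{w\oplus(u\oplus v') : w\in W,\ u\in U,\ v'\in V\}$. The bridge between a typical element of each side is the right gyroassociative law, which with $a=w$, $b=u$, $c=v$ gives $(w\oplus u)\oplus v=w\oplus(u\oplus \text{gyr}[u,w]v)$.

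For the inclusion $\subseteq$, I would start from an arbitrary element $(w\oplus u)\oplus v$ of the left-hand side, rewrite it as $w\oplus(u\oplus \text{gyr}[u,w]v)$, and observe that $\text{gyr}[u,w]v\in \text{gyr}[u,w]V=V$ because $V$ is gyr-invariant; hence this element lies in $W\oplus(U\oplus V)$. For the reverse inclusion $\supseteq$, I would take $w\oplus(u\oplus v')$ with $v'\in V$ and use that $\text{gyr}[u,w]$ is a groupoid automorphism (condition $(G3)$), so it is a bijection of $G$; combined with $\text{gyr}[u,w]V=V$ this makes its restriction to $V$ a bijection of $V$ onto $V$. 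Thus there is some $v\in V$ with $\text{gyr}[u,w]v=v'$, and the same identity rewrites $w\oplus(u\oplus v')$ as $(w\oplus u)\oplus v\in(W\oplus U)\oplus V$.

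The only point requiring a little care — and the closest thing to an obstacle — is the reverse inclusion, where surjectivity of $\text{gyr}[u,w]$ onto $V$ is needed: gyr-invariance must be read as the exact equality $\text{gyr}[u,w]V=V$, not a mere inclusion, so that every $v'\in V$ is actually attained. With both inclusions in hand the two sets coincide, which completes the argument.
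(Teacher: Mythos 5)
Your proof is correct and takes essentially the same route as the paper: the paper's one-line argument is exactly the right gyroassociative law $(w\oplus u)\oplus v=w\oplus(u\oplus\text{gyr}[u,w]v)$ combined with the set equality $\text{gyr}[u,w]V=V$, compressed into a single chain of equalities $(W\oplus U)\oplus V=W\oplus\bigl(U\oplus\bigcup_{u\in U,w\in W}\text{gyr}[u,w]V\bigr)=W\oplus(U\oplus V)$. Your version simply spells out the two inclusions, and your point of care (that gyr-invariance is the exact equality $\text{gyr}[u,w]V=V$, so every $v'\in V$ is attained) is precisely what the paper's implicit identification of that union with $V$ relies on.
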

\begin{proof}
By the definition of gyr-invariant subset, we can get
 $(W\oplus U)\oplus V=W\oplus (U\oplus\bigcup_{u\in U, w\in W} \text{gyr}[u,w]V)=W\oplus( U\oplus V)$.
\end{proof}

\begin{definition}\cite{B1}
A subgyrogroup $H$ of a topological gyrogroup $(G,\tau,\oplus)$ is called \textit{inner neutral} (resp. \textit{outer neutral}) if for every neighborhood $U$ of $0$ in $G$, there is an open neighborhood $V$ of $0$ in $G$ such that $H\oplus V \subseteq U\oplus H$ (resp. $V\oplus H \subseteq H\oplus U$).
\end{definition}
We say that $H$ is \textit{neutral} if $H$ is both inner neutral and outer neutral in $G$.

%\begin{lemma}\cite[Theorem 20]{Suk3}
%For a given gyrogroup $(G, \oplus)$, the set $G/H = \{a \oplus H : a \in G\}$ is a disjoint partition of $G$, where $H$ is an $L$-subgyrogroup of $G$.
%\end{lemma}

Consider a topological gyrogroup $(G, \tau, \oplus)$ with $H$ being an $L$-subgyrogroup of $G$.
As derived from \cite[Theorem 20]{Suk3}, the set $G/H = \{a \oplus H : a \in G\}$ constitutes a division of $G$ into distinct parts. The function $\pi$, which maps each element $a\in G$ to the coset $a \oplus H$ onto $G/H$, ensures that the preimage $\pi^{-1}(\pi(a))=a \oplus H$. We refer to $\tau(G)$ as the topology on $G$. In defining a topology $\widetilde{\tau}$ on the set of left cosets $G/H$ of the gyrogroup $G$, we employ $\widetilde{\tau}=\{O \subset G/H : \pi^{-1}(O)\in \tau(G)\}$, with $\tau(G)$ indicating the established topology of $G$.
Based on \cite[Theorem 3.13]{B1}, it can be deduced that the mapping $\pi$ is open and that the quotient space $G/H$ is $T_1$ homogeneous whenever $H$ is identified as a closed strong subgyrogroup.

\begin{proposition}\label{pro2.28}\cite{JX2}
Let $(G, \tau,\oplus)$ be a paratopological gyrogroup and $H$ a $L$-subgyrogroup of $G$.
Then the natural homomorphism
$\pi$ from a paratopological gyrogroup $G$ to its quotient topology on $G/H$ is an open and continuous mapping.
\end{proposition}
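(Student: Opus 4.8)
The plan is to treat the two assertions separately, observing that the continuity of $\pi$ is essentially built into the definition of the quotient topology, while the openness is where the genuine work lies. Continuity is immediate: by the definition of $\widetilde{\tau}$, a set $O\subseteq G/H$ is open precisely when $\pi^{-1}(O)\in\tau(G)$, so for every open $O$ the preimage $\pi^{-1}(O)$ is open in $G$, which is exactly the continuity of $\pi$.

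For openness I would fix an arbitrary $U\in\tau(G)$ and show that $\pi(U)$ is open. Again by the definition of $\widetilde{\tau}$, it suffices to prove that $\pi^{-1}(\pi(U))$ is open in $G$. Since $H$ is an $L$-subgyrogroup, the left cosets $\{a\oplus H:a\in G\}$ form a partition of $G$, so $g\oplus H=u\oplus H$ holds if and only if $g\in u\oplus H$; consequently
$$\pi^{-1}(\pi(U))=\bigcup_{u\in U}(u\oplus H)=U\oplus H=\bigcup_{h\in H}(U\oplus h).$$
Thus the problem reduces to showing that each set $U\oplus h=R_h(U)$ is open, where $R_h\colon G\to G$ denotes the right translation $R_h(x)=x\oplus h$.

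The crux is therefore to prove that every $R_h$ is a homeomorphism. Its continuity is clear from the continuity of $\oplus$ (fix the second coordinate at $h$), and Theorem \ref{the1.3}(6)--(7), together with $\ominus(\ominus h)=h$, shows that $y\mapsto y\boxminus h$ is a two-sided inverse of $R_h$, since $(y\boxminus h)\oplus h=y$ and $(x\oplus h)\boxminus h=x$. The main obstacle is the continuity of this inverse: because $G$ is only \emph{paratopological}, the inversion $\ominus(\cdot)$ need not be continuous, so one cannot invoke continuity of the cooperation $\boxminus$ directly. To circumvent this I would rewrite the inverse by means of Proposition \ref{the3.22s}, obtaining
$$R_h^{-1}(y)=y\boxminus h=y\boxplus(\ominus h)=(\ominus h)\oplus\big((h\oplus y)\oplus(\ominus h)\big).$$
Here $h$ and $\ominus h$ are \emph{fixed} elements of $G$, so the right-hand side is a composition of the three continuous maps $y\mapsto h\oplus y$, $z\mapsto z\oplus(\ominus h)$, and $w\mapsto(\ominus h)\oplus w$, and is therefore continuous in $y$. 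Hence each $R_h$ is a homeomorphism, every $U\oplus h$ is open, and $U\oplus H$ is open as a union of open sets. This gives that $\pi^{-1}(\pi(U))$ is open, so $\pi(U)\in\widetilde{\tau}$ and $\pi$ is open, completing the plan.
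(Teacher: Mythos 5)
Your proof is correct. The paper does not actually reproduce an argument for this proposition (it is quoted from \cite{JX2}), but your proof is the standard one and uses exactly the toolkit the paper sets up: continuity is definitional from $\widetilde{\tau}$; openness reduces, via the coset partition for $L$-subgyrogroups (\cite{Suk3}), to the openness of $\pi^{-1}(\pi(U))=U\oplus H=\bigcup_{h\in H}U\oplus h$; and the one genuinely delicate point in the paratopological setting --- continuity of the inverse of right translation without continuous inversion --- is handled by precisely the identity of Proposition \ref{the3.22s}, which rewrites $y\boxminus h=(\ominus h)\oplus\bigl((h\oplus y)\oplus(\ominus h)\bigr)$ as a composition of translations by the fixed elements $h$ and $\ominus h$, with the needed facts $(y\boxminus h)\oplus h=y$, $(x\oplus h)\boxminus h=x$ and $\ominus(\ominus h)=h$ all following from Theorem \ref{the1.3}.
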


\begin{corollary}\cite{Liu1}\label{cor2.17}
Assume that $(G, \tau, \oplus)$ is a strongly topological gyrogroup and $H$ is a closed strong subgroup of $G$, then the quotient space $G/H$ is complete regular if $H$ is neutral.
\end{corollary}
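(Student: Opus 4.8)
The plan is to prove complete regularity by exhibiting the topology of $G/H$ as the one induced by a separating family of continuous pseudometrics; since $G/H$ is already known to be $T_1$ (indeed $T_1$ and homogeneous), producing such a family that actually generates the topology is exactly enough to conclude that $G/H$ is completely regular (Tychonoff). Thus the whole argument reduces to two tasks: (i) manufacturing continuous invariant pseudometrics on $G$ itself, and (ii) pushing each of them down through the open quotient map $\pi$ onto $G/H$, with the neutrality of $H$ providing the glue in (ii).

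For (i), I would run the Birkhoff--Kakutani recursion inside the strongly topological gyrogroup $G$. Starting from an arbitrary neighborhood $U$ of $0$, use the strongly topological hypothesis to choose a decreasing sequence of symmetric gyr-invariant open neighborhoods with $V_0\subseteq U$ and $V_{n+1}\oplus V_{n+1}\oplus V_{n+1}\subseteq V_n$ for every $n$. The key point is that, because each $V_n$ is gyr-invariant, Proposition \ref{prop2.10s} forces the iterated sums to behave associatively, namely $(W\oplus V_n)\oplus V_n=W\oplus(V_n\oplus V_n)$, so the chain estimates underlying the Birkhoff--Kakutani construction go through despite the failure of associativity in $G$. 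This yields a prenorm, and hence a continuous left-invariant pseudometric $d=d_U$ on $G$ satisfying $\{x:d(0,x)<1\}\subseteq U$, so that the family $\{d_U\}$ generates the topology of $G$.

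For (ii), I would descend each $d_U$ to a function $\widehat{d}$ on $G/H$ by the coset-gap formula
$$\widehat{d}(\pi(x),\pi(y))=\inf\{\, d(x\oplus h,\, y\oplus h'):h,h'\in H\,\}.$$
Neutrality of $H$ is exactly what is needed to control this infimum: the relations $H\oplus V\subseteq U\oplus H$ and $V\oplus H\subseteq H\oplus U$ let one compare distances between representatives of two cosets independently of the representatives chosen, and they are what make $\widehat{d}$ a genuine pseudometric compatible with, and generating, the quotient topology. Since $\pi$ is open and continuous by Proposition \ref{pro2.28}, the $\widehat{d}$-balls are images of open sets and form a neighborhood base, so $\{\widehat{d}_U\}$ generates the topology of $G/H$; combined with $T_1$ this gives complete regularity. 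The main obstacle is precisely this descent: verifying that $\widehat{d}$ satisfies the triangle inequality and is continuous, because the coset-gap does not in general define a pseudometric and the non-associativity of $\oplus$ means the infimum over cosets must be estimated through the gyr-invariant chains of (i) rather than by a direct cancellation. It is here that the neutrality of $H$ and the gyr-invariance supplied by the strongly topological structure must be used in tandem.
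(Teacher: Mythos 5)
First, a point of order: the paper contains no proof of this corollary at all --- it is imported verbatim from \cite{Liu1} --- so your attempt can only be measured against the argument that the cited literature (and the classical group case) actually uses.

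Measured that way, your proposal has a genuine gap, and it sits exactly at the step you yourself flag as ``the main obstacle'': the descent. The coset-gap formula $\widehat{d}(\pi(x),\pi(y))=\inf\{d(x\oplus h,\,y\oplus h'):h,h'\in H\}$ applied to a \emph{left}-invariant pseudometric on the space of \emph{left} cosets need not satisfy the triangle inequality even for topological groups. Writing $d(x,y)=N(x^{-1}y)$ in the group case, the triangle estimate requires chaining through a representative $y\oplus h_1'$ of the middle coset produced by the first infimum, and one cannot restart the second infimum from that representative unless $d$ is right-$H$-invariant, i.e.\ $d(a\oplus h,b\oplus h)=d(a,b)$ for $h\in H$; this is precisely why, for groups, one descends the right-invariant metric $N(xy^{-1})$ rather than the left-invariant $N(x^{-1}y)$. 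Your step (i), the Birkhoff--Kakutani recursion through gyr-invariant neighborhoods, produces gyr-invariant, left-invariant prenorm metrics and gives no right-$H$-invariance whatsoever; and neutrality, being a purely set-theoretic interchange condition ($H\oplus V\subseteq U\oplus H$, $V\oplus H\subseteq H\oplus U$), does not convert into the pointwise metric identity the triangle inequality needs. So the sentence ``neutrality is exactly what is needed'' is an assertion, not a proof, and as formulated the plan breaks at its central step --- in the same way it would break for an arbitrary (non-neutral) closed subgroup of a group.

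The repair, and the decomposition the cited proof effectively uses, is to spend neutrality \emph{before} metrizing rather than after: saturate first. Given an open $O\ni 0^*$, put $U_0=\pi^{-1}(O)$ and recursively choose gyr-invariant open symmetric $V_{n+1}$ with $V_{n+1}\oplus V_{n+1}\subseteq V_n$ and (by neutrality, after shrinking) $H\oplus V_{n+1}\subseteq V_n\oplus H$, with $V_0\oplus H\subseteq U_0$. The $H$-saturated sets $W_n=V_n\oplus H$ are open, gyr-invariant (gyrations are automorphisms preserving $V_n$ and $H$ setwise), and satisfy $W_{n+1}\oplus W_{n+1}\subseteq W_n$: indeed $(V_{n+1}\oplus H)\oplus(V_{n+1}\oplus H)=V_{n+1}\oplus((H\oplus V_{n+1})\oplus H)\subseteq(V_{n+1}\oplus V_n)\oplus H\subseteq V_{n-1}\oplus H$ after the obvious reindexing, where every reassociation is Proposition \ref{prop2.10s}. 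Now run the Birkhoff--Kakutani construction on the chain $\{W_n\}$ itself (legitimate because all sets involved are gyr-invariant, so Proposition \ref{prop2.10s} restores associativity). The resulting continuous function $N:G\to[0,1]$ with $\{N<1\}\subseteq W_0\subseteq U_0$ is automatically constant on left cosets: each approximating set $U(r)$ in the construction is a product ending in some $W_m$, hence $U(r)\oplus H=U(r)$, and conversely $x=(x\oplus h)\oplus\text{gyr}[x,h](\ominus h)\in(x\oplus h)\oplus H$ since $H$ is a strong subgyrogroup. Thus $N$ factors through $\pi$ to a continuous function on $G/H$ separating $0^*$ from $(G/H)\setminus O$, and homogeneity together with the $T_1$ property of $G/H$ yields complete regularity. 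No infimum over cosets, and no triangle inequality for a quotient gap function, is ever required; that is the idea your outline is missing.
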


The following definitions can also be found in \cite{Ling}. For the sake of completeness of this paper, they are listed here.

\begin{definition}
Let $X$ be a space.

(1) $X$ is called a \textit{weakly first-countable space} if for each point $x \in X$ it is possible to assign a sequence $\{B(n,x): n \in \mathbb{N}\}$ of subsets of $X$ containing $x$ in such a way that $B(n+1,x) \subseteq B(n,x)$ and so that a set $U$ is open if, and only if, for each $x \in U$ there exists $n \in \mathbb{N}$ such that $B(n,x) \subseteq U$.

(2) $X$ is called a \textit{sequential space} if for each non-closed subset $A \subset X$, there are a point $x \in X \setminus A$ and a sequence in $A$ converging to $x$ in $X$.

(3) $X$ is a \textit{k-space} if each $A \subseteq X$, then $A$ is closed in $X$ if and only if the intersection of $A$ with any compact subspace $Z$ of $X$ is closed in $Z$.

(4) $X$ is called a \textit{Fr\'{e}chet-Urysohn space} if for any subset $A \subseteq X$ and $x \in \overline{A}$, there is a sequence in $A$ converging to $x$ in $X$.

(5) $X$ is called a \textit{strongly Fr\'{e}chet-Urysohn space} if the following condition is satisfied:

(SFU) For each $x \in X$ and every sequence $\xi = \{A_n: n \in \mathbb{N}\}$ of subsets of $X$ such that $x \in \bigcap_{n \in \mathbb{N}} \overline{A_n}$, there exists a sequence $\eta = \{b_n: n \in \mathbb{N}\}$ in $X$ converging to $x$ and intersecting infinitely many members of $\xi$.

(6) $X$ is called an \textit{$\alpha_4$-space}, if for every point $x \in X$ and each sheaf $\{S_n: n \in \mathbb{N}\}$ with the vertex $x$, there exists a sequence converging to $x$ which meets infinitely many sequences $S_n$.

(7) $X$ is called an \textit{$\alpha_7$-space}, if for every point $x \in X$ and each sheaf $\{S_n: n \in \mathbb{N}\}$ with the vertex $x$, there exists a sequence converging to some point $y \in X$ which meets infinitely many sequences $S_n$.

(8) $X$ is said to have \textit{countable tightness} if for each $A \subseteq X$ and each $x \in \overline{A}$, there exists a countable subset $C \subset A$ such that $x \in \overline{C}$.
\end{definition}

\begin{definition}\cite{Arha}
Let $\zeta$ be a family of non-empty subsets of a space $X$.
\begin{enumerate}
    \item $\zeta$ is called a \textit{prefilter} on $X$ if whenever $P_1$ and $P_2$ are in $\zeta$, there exists $P \in \zeta$ such that $P \subset P_1 \cap P_2$.
    \item A prefilter $\zeta$ on $X$ is said to \textit{converge to a point} $x \in X$ if every open neighborhood of $x$ contains an element of $\zeta$.
    \item A prefilter $\zeta$ on $X$ is said to \textit{accumulate to a point} $x \in X$ if $x$ belongs to the closure of each element of $\zeta$.
    \item Two prefilters $\zeta$ and $\eta$ on $X$ are said to be \textit{synchronous} if, for any $P \in \zeta$ and $Q \in \eta$, $P \cap Q \neq \emptyset$.
    \item $X$ is called a \textit{bisequential space} if, for every prefilter $\zeta$ on $X$ accumulating to a point $x \in X$, there exists a countable prefilter $\eta$ on $X$ converging to the same point $x$ such that $\zeta$ and $\eta$ are synchronous.
    \item $X$ is called a \textit{biradial space} if, for every prefilter $\zeta$ on $X$ converging to a point $x \in X$, there exists a chain $\eta$ of subsets of $X$ converging to $x$ and synchronous with $\zeta$.
\end{enumerate}
\end{definition}

\begin{definition}
Let $\mathcal{P}$ be a family of subsets of a space $X$ with $x \in \bigcap \mathcal{P}$.

(1) The family $\mathcal{P}$ is called a \textit{network at $x$} if for each neighborhood $U$ of $x$ there exists $P \in \mathcal{P}$ such that $P \subset U$.

(2) The family $\mathcal{P}$ is called a \textit{cs-network at $x$} if for any sequence $L$ converging to $x$ and a neighborhood $U$ of $x$, there exists $P \in \mathcal{P}$ such that $L$ is eventually in $P$ and $P \subset U$.

(3) The family $\mathcal{P}$ is called an \textit{sn-network at $x$} if $\mathcal{P}$ is a network at $x$ and each element of $\mathcal{P}$ is a sequential neighborhood of $x$.

(4) The space $X$ is called \textit{csf-countable} (resp., \textit{snf-countable}) if for each $x \in X$, there is a countable cs-network (resp., sn-network) at $x$.\footnote{The terms "csf" and "snf" stand for "countable sequential fan" and "sequential neighborhood fan," respectively.}
\end{definition}

\begin{definition}\cite{Arha3}
Let $\mathcal{U}$ be a prefilter on a space $X$ and $x \in X$.
\begin{enumerate}
    \item $\mathcal{U}$ is called a \textit{nest} if $\mathcal{U}$ consists of open subsets of $X$ and has the following property: For any $U, V \in \mathcal{U}$, either $U \subset V$ or $V \subset U$.
    \item $X$ is \textit{$\pi$-nested at $x$} if there exists a nest in $X$ converging to $x$.
    \item $X$ is \textit{nested at $x$} if there is a nest in $X$ which forms a local base at $x$.
    \item $X$ is \textit{nested} if it is nested at each of its points.
\end{enumerate}
\end{definition}

\begin{definition} Let $X$ be a space and $x \in X$.

(1) The point $x$ is said to have an \textit{$\omega^\omega$-neighborhood base} or a \textit{local $\mathfrak{G}$} if there is a neighborhood base at $x$ of the form $\{U_\alpha(x): \alpha \in \mathbb{N}^\mathbb{N}\}$ such that $U_\beta(x) \subset U_\alpha(x)$ for all elements $\alpha \leq \beta$ in $\mathbb{N}^\mathbb{N}$, where $\mathbb{N}^\mathbb{N}$ consisting of all functions from $\mathbb{N}$ to $\mathbb{N}$ is endowed with the natural partial order, i.e., $f \leq g$ if and only if $f(n) \leq g(n)$ for all $n \in \mathbb{N}$.

(2) The space $X$ is said to have an \textit{$\omega^\omega$-base} if each $x \in X$, $x$ has an $\omega^\omega$-neighborhood base.
\end{definition}
Let us recall definitions of the \textit{sequential fan $S_\omega$} and \textit{Arens' space $S_2$}.

\begin{definition} (1) $S_\omega$ is the quotient space obtained from the disjoint sum of a countable family of convergent sequences via identifying limit points of all these sequences.

(2) $S_2 = (\mathbb{N} \times \mathbb{N}) \cup \{\infty\}$ is the space with each point of $\mathbb{N} \times \mathbb{N}$ isolated. The set $\{n\} \cup \{(m,n): m \geq k\}$ is a $k$-th basic neighborhood of $n \in \mathbb{N}$, and a set $U$ is a neighborhood of $\infty$ if and only if $\infty \in U$ and $U$ is an open neighborhood of all but finitely many $n \in \mathbb{N}$.
\end{definition}

\section{The biradial quotient spaces of strongly topological gyrogroups}
Drawing inspiration from the work of X. Ling and B. Zhao \cite{Ling}, who characterized biradial coset spaces, this section aims to explore and establish several properties pertaining to biradial quotient spaces of strongly topological gyrogroups.

\begin{lemma}\cite[Theorem 2]{Arha3}\label{lem3.3}
Every biradial space is $\pi$-nested at every point.
\end{lemma}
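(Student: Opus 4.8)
The plan is to prove that every biradial space $X$ is $\pi$-nested at each point $x$, i.e., to produce a nest (a prefilter of open sets linearly ordered by inclusion) converging to $x$. The natural starting prefilter is the neighborhood filter $\zeta = \{U : U \text{ is an open neighborhood of } x\}$, which trivially converges to $x$. By the definition of biradial, there exists a chain $\eta$ of subsets of $X$ converging to $x$ and synchronous with $\zeta$. The issue is that the members of $\eta$ are arbitrary subsets, not necessarily open, so $\eta$ itself need not be a nest in the sense required. Thus the heart of the argument is to convert the abstract converging chain $\eta$ into a \emph{nest of open sets} that still converges to $x$.

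First I would record the key properties of $\eta$. Since $\eta$ converges to $x$, every open neighborhood $U$ of $x$ contains some member $E \in \eta$. Since $\eta$ is a chain, its members are linearly ordered by inclusion. The plan is to replace each $E \in \eta$ by its interior, or more robustly, to build from $\eta$ a family of open sets as follows: for each open neighborhood $U$ of $x$, the convergence of $\eta$ gives an $E_U \in \eta$ with $E_U \subseteq U$; I would like to interpose an open set $W$ with $E_U \subseteq W \subseteq U$ containing $x$, but in general no such interpolation exists. The cleaner route is to define, for each open neighborhood $U$ of $x$, the set
\begin{equation*}
V_U = \bigcup \{ O : O \text{ open}, \ x \in O, \ E \subseteq O \text{ for some } E \in \eta \text{ with } E \subseteq U \}.
\end{equation*}
One then checks that the collection $\{V_U\}$ is directed downward compatibly with the chain structure of $\eta$. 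The main obstacle is exactly guaranteeing linear orderability: starting from an arbitrary chain $\eta$ of (possibly non-open) sets, one must manufacture \emph{open} sets that remain totally ordered by inclusion while still forming a converging prefilter. I expect this to require carefully indexing the construction by the chain $\eta$ itself rather than by the neighborhood filter, so that the inherited linear order of $\eta$ is transported to the open sets.

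The concrete execution I would carry out: index $\eta = \{E_i : i \in I\}$ where $(I, \le)$ is a linearly ordered set with $E_i \subseteq E_j$ iff $i \le j$. For each $i$, set $W_i = \operatorname{int}(E_i) \cup$ (something ensuring $x \in W_i$); more safely, since convergence is the only thing needed, define $W_i$ as the interior of the union of all $E_j$ with $j \le i$ intersected with a fixed cofinal supply of neighborhoods. Because the $E_i$ form a chain, the sets $\bigcup_{j \le i} E_j$ again form a chain, and taking interiors preserves inclusions, so $\{W_i\}$ is a nest of open sets. I would then verify convergence: given an open neighborhood $U$ of $x$, convergence of $\eta$ yields some $E_i \subseteq U$, and I must argue that the corresponding $W_i$ (or a later/earlier member) lies inside $U$ and contains $x$; the synchronicity of $\eta$ with the neighborhood filter $\zeta$ is what forces $x$ to sit in the closure-compatible part of each relevant $E_i$, which lets me keep $x$ inside the constructed open sets.

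The step I expect to be the main obstacle is precisely this last verification—ensuring simultaneously that (a) the constructed open sets contain the point $x$, (b) they remain linearly ordered, and (c) they are cofinal enough to converge to $x$. Conditions (a) and (c) pull in tension with the fact that taking interiors can shrink the chain members past $x$; resolving it will rely essentially on biradiality giving a chain synchronous with the full neighborhood filter at $x$, not merely a converging chain. Once the nest $\{W_i\}$ of open sets converging to $x$ is in hand, $\pi$-nestedness at $x$ follows by definition, and since $x$ was arbitrary, $X$ is $\pi$-nested at every point.
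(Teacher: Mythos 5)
The paper itself offers no proof of this lemma: it is quoted directly from Arhangel'ski\v{\i} \cite[Theorem 2]{Arha3}, so your attempt can only be judged on its own merits, and on those merits it has an essential gap: nowhere do you actually produce \emph{nonempty open} sets, which is the whole content of the statement. Your set $V_U=\bigcup\{O : O\ \text{open},\ x\in O,\ E\subseteq O\ \text{for some}\ E\in\eta\ \text{with}\ E\subseteq U\}$ is vacuous: $O=X$ is open, contains $x$, and contains some $E\in\eta$ with $E\subseteq U$ (such an $E$ exists precisely because $\eta$ converges to $x$), so $V_U=X$ for every $U$, and the family $\{V_U\}$ is the trivial chain $\{X\}$, which does not converge to $x$. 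In your ``concrete execution'', since $\eta$ is a chain we have $\bigcup_{j\le i}E_j=E_i$, so $W_i$ is just $\operatorname{int}(E_i)$ (the phrase ``intersected with a fixed cofinal supply of neighborhoods'' is not a defined operation), and this interior can be empty for \emph{every} $i$: biradiality only asserts that \emph{some} synchronous converging chain exists, and nothing prevents that chain from consisting of nowhere dense sets. Concretely, in $\mathbb{R}$ at $x=0$ the tails $T_n=\{1/m : m\ge n\}$ form a chain converging to $0$ and synchronous with the neighborhood filter; if this is the chain biradiality hands you, every $\operatorname{int}(E_i)$ is empty and the construction collapses, since a nest must by definition be a prefilter (hence have nonempty members) of open sets.

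There is also a conceptual misreading that makes your repair attempts circular: you repeatedly insist the nest members must contain $x$, but $\pi$-nestedness does not require this --- convergence only asks that every neighborhood of $x$ \emph{contain} a member. If you force $x\in W_i$, a converging chain of open sets each containing $x$ is exactly a linearly ordered local base, i.e.\ you would be proving ``biradial $\Rightarrow$ nested'', which is false for general spaces (a linearly ordered space such as $\omega_1+1+\omega^{*}$ is biradial but not nested at the join point); in this paper it is the gyrogroup structure, via Proposition \ref{prop3.3}, that upgrades ``$\pi$-nested at one point'' to ``nested''. Likewise, patching $W_i=\operatorname{int}(E_i)\cup N_i$ with open $N_i\ni x$ forces the $N_i$ themselves to be a chain of neighborhoods converging to $x$, which is what you are trying to build. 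The missing idea is to apply biradiality not to the bare neighborhood filter but to a finer prefilter whose synchronicity forces chain members to be somewhere dense; for instance $\zeta=\{V\cap D : V\ \text{an open neighborhood of}\ x,\ D\ \text{dense open in}\ X\}$ is a prefilter converging to $x$, synchronicity of $\eta$ with $\zeta$ forces $\operatorname{int}(\overline{E})\neq\emptyset$ for every $E\in\eta$ (otherwise $X\setminus\overline{E}$ is a dense open set disjoint from $E$), and then $\{\operatorname{int}(\overline{E}) : E\in\eta\}$ is a nest which, in a regular space (the quotient spaces considered in this paper are completely regular by Corollary \ref{cor2.17}), converges to $x$: given a neighborhood $O$ of $x$, pick $V$ with $\overline{V}\subseteq O$ and $E\in\eta$ with $E\subseteq V$, so that $\operatorname{int}(\overline{E})\subseteq\overline{V}\subseteq O$. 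Your proposal contains no mechanism of this kind, so the gap is one of substance rather than detail.
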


\begin{lemma}\label{lem3.2}
Let $G$ be a strongly topological gyrogroup and $H$ a closed $L$-subgyrogroup of $G$. Then for each open neighborhood $V$ at the identity there exists an open neighborhood $W$ at the identity such that $\ominus (W\oplus H)\subseteq H\oplus V$.
\end{lemma}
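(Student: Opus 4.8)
The plan is to reduce the set inclusion to the behaviour of a single element $\ominus(w\oplus h)$ with $w\in W$ and $h\in H$, and to rewrite that element via the Gyrosum Inversion law so that the ``subgyrogroup part'' and the ``neighborhood part'' separate cleanly. The two hypotheses (strong topologicality and $L$-subgyrogroup) are then used to control these two parts independently.

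First I would invoke the strongly topological gyrogroup hypothesis: given the open neighborhood $V$ of $0$, choose a gyr-invariant open neighborhood $V_1$ of $0$ with $V_1\subseteq V$ by Definition \ref{defst}. Then, using continuity of the inversion map at $0$ together with $\ominus 0=0$, choose an open neighborhood $W$ of $0$ with $\ominus W\subseteq V_1$. These are the only two neighborhoods the proof needs.

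Next, fix $w\in W$ and $h\in H$ and analyse $\ominus(w\oplus h)$. By the Gyrosum Inversion identity, Theorem \ref{the1.3}(9), we have $\ominus(w\oplus h)=\text{gyr}[w,h](\ominus h\ominus w)$. Since $\text{gyr}[w,h]$ is an automorphism of $(G,\oplus)$ by ($G3$), this equals $\text{gyr}[w,h](\ominus h)\oplus\text{gyr}[w,h](\ominus w)$. The first factor lies in $H$: because $H$ is an $L$-subgyrogroup, $\text{gyr}[w,h](H)=H$, and $\ominus h\in H$ since $H$ is a subgyrogroup, so $\text{gyr}[w,h](\ominus h)\in H$. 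The second factor lies in $V_1$: since $\ominus w\in V_1$ and $V_1$ is gyr-invariant, $\text{gyr}[w,h](\ominus w)\in\text{gyr}[w,h](V_1)=V_1$. Hence $\ominus(w\oplus h)\in H\oplus V_1\subseteq H\oplus V$, and as $w\in W$ and $h\in H$ were arbitrary, $\ominus(W\oplus H)\subseteq H\oplus V$.

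The step to get right is the algebraic rewriting in the previous paragraph: combining the Gyrosum Inversion law with the automorphism property of $\text{gyr}$ is exactly what splits $\ominus(w\oplus h)$ into a factor in $H$ and a factor in $V_1$, and the two defining hypotheses (the $L$-subgyrogroup condition, and the gyr-invariance of $V_1$ supplied by strong topologicality) are precisely what keep those two pieces inside $H$ and inside $V$ respectively. I do not expect a serious obstacle beyond verifying these identities carefully; in particular, the closedness of $H$ is not actually required for this inclusion, only its $L$-subgyrogroup structure.
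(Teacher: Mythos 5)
Your proof is correct and follows essentially the same route as the paper: both hinge on the Gyrosum Inversion identity $\ominus(w\oplus h)=\text{gyr}[w,h](\ominus h\ominus w)$, the $L$-subgyrogroup condition to keep the $H$-part in $H$, and a gyr-invariant neighborhood from strong topologicality to keep the other part inside $V$. The only (cosmetic) difference is that the paper takes a single \emph{symmetric} gyr-invariant $W\subseteq V$ instead of your pair $(V_1,W)$ with $\ominus W\subseteq V_1$, and it applies $\text{gyr}[w,h]$ to the set $H\oplus W$ at once rather than splitting elementwise via the automorphism property; your observation that closedness of $H$ is not used also matches the paper's argument.
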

\begin{proof}
Since $G$ is a strongly topological gyrogroup, we can take a gyr-invariant symmetric open neighborhood $W$ at the identity such that $W\subseteq V$. For each $a\in W\oplus H$, there exist $w\in W$ and $h\in H$ such that $a=w\oplus h$.
Then
$$\ominus a=\ominus (w\oplus h)=\text{gyr}[w,h](\ominus h\ominus w)\in \text{gyr}[w,h](H\oplus W)=H\oplus W\subseteq H\oplus V.$$
\end{proof}

\begin{proposition}\label{prop3.3}
Let $H$ be a closed strong subgyrogroup of a strongly topological gyrogroup $G$.
If $H$ is neutral,
and \( G/H \) is \(\pi\)-nested at some point \( x \in G/H \), then \( G/H \) is nested.
\end{proposition}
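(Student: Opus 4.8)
The plan is to exploit the homogeneity of $G/H$ to reduce to the identity coset and then to manufacture a nested local base there by symmetrising the given converging nest inside $G$. Write $e=\pi(0)$ for the base point. Since $H$ is a closed strong subgyrogroup, $G/H$ is homogeneous, so $\pi$-nestedness at the given point $x$ transports to $e$; thus I may assume there is a nest $\mathcal{N}=\{N_\alpha\}$ of open sets converging to $e$. The obstacle to calling $\mathcal{N}$ a base is that the members of a converging nest need not be \emph{neighbourhoods} of $e$ (e.g. $\{(0,1/n)\}$ converges to $0$ in $\mathbb{R}$), so the heart of the argument is to replace each $N_\alpha$ by a genuine open neighbourhood of $e$ while preserving the chain structure and the convergence.

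I would lift the nest to $G$: put $M_\alpha=\pi^{-1}(N_\alpha)$, an open $H$-saturated set (here one uses that $H$ is a strong subgyrogroup, so each coset is stable under right translation by $H$), and set $P_\alpha=(\ominus M_\alpha)\oplus M_\alpha$. The easy properties of $\{\pi(P_\alpha)\}$ are then checked directly: each $P_\alpha$ is open (a union of open translates, right translations being homeomorphisms) and contains $0$ (since $\ominus a\oplus a=0$ for any $a\in M_\alpha$, and $M_\alpha\neq\emptyset$), so each $\pi(P_\alpha)$ is an open neighbourhood of $e$ because $\pi$ is open and continuous (Proposition \ref{pro2.28}); moreover $M_\alpha\subseteq M_\beta$ forces $P_\alpha\subseteq P_\beta$, so $\{\pi(P_\alpha)\}$ is a chain and hence a nest. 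It remains only to prove that this nest is a local base at $e$.

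The main obstacle is the neighbourhood estimate showing that $\{\pi(P_\alpha)\}$ shrinks: given an open neighbourhood $U$ of $0$, I must find $\alpha$ with $\pi(P_\alpha)\subseteq\pi(U)$, equivalently $P_\alpha\subseteq U\oplus H$. Via convergence of $\mathcal{N}$ this reduces to producing a single gyr-invariant symmetric open neighbourhood $W$ of $0$ with $\ominus(W\oplus H)\oplus(W\oplus H)\subseteq U\oplus H$; for then $\pi(W)$ is a neighbourhood of $e$, so some $N_\alpha\subseteq\pi(W)$, whence $M_\alpha\subseteq\pi^{-1}(\pi(W))=W\oplus H$ and $P_\alpha\subseteq\ominus(W\oplus H)\oplus(W\oplus H)$. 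The estimate itself is where the hypotheses combine. Using that $H$ is neutral (and shrinking to gyr-invariant neighbourhoods by the strongly topological gyrogroup property), first choose by inner neutrality a gyr-invariant $U_1$ with $H\oplus U_1\subseteq U\oplus H$, then by outer neutrality a gyr-invariant $U_0$ with $U_0\oplus H\subseteq H\oplus U_1$, and finally a gyr-invariant symmetric $W$ with $W\oplus W\subseteq U_0$. Since both $H$ and $W$ are gyr-invariant, every reassociation below is licensed by Proposition \ref{prop2.10s}, and Lemma \ref{lem3.2} gives $\ominus(W\oplus H)\subseteq H\oplus W$:
\begin{align*}
\ominus(W\oplus H)\oplus(W\oplus H)
&\subseteq(H\oplus W)\oplus(W\oplus H)\\
&=\big(H\oplus(W\oplus W)\big)\oplus H\\
&\subseteq(H\oplus U_{0})\oplus H=H\oplus(U_{0}\oplus H)\\
&\subseteq H\oplus(H\oplus U_{1})=H\oplus U_{1}\subseteq U\oplus H,
\end{align*}
where $H\oplus H=H$ because $H$ is a subgyrogroup. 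This is the one genuinely delicate point: each reassociation must be justified by the gyr-invariance of the trailing factor, and the two applications of neutrality must be chained in the correct order.

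With the estimate in hand, $\{\pi(P_\alpha):N_\alpha\in\mathcal{N}\}$ is a nest of open neighbourhoods of $e$ that is cofinal in the neighbourhood filter at $e$, i.e. a nest forming a local base, so $G/H$ is nested at $e$. Finally, since $G/H$ is homogeneous, a homeomorphism carrying $e$ to an arbitrary point carries this nested local base to one at that point; hence $G/H$ is nested at every point, as required.
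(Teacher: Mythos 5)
Your proposal is correct and takes essentially the same route as the paper's own proof: both arguments reduce to the identity coset by homogeneity, symmetrize the lifted nest elements to form $\pi(\ominus \pi^{-1}(A)\oplus \pi^{-1}(A))$, verify the local-base property by combining Lemma \ref{lem3.2}, Proposition \ref{prop2.10s}, the neutrality of $H$, and the convergence of the nest, and then finish by homogeneity. The only cosmetic difference is in the containment chain: you invoke outer neutrality for an intermediate neighborhood $U_0$, whereas the paper gets by with inner neutrality alone (using $W\subseteq V$ from the construction in Lemma \ref{lem3.2}); both versions rely, as the paper itself does, on the fact established in the proof of Lemma \ref{lem3.2} that any gyr-invariant symmetric $W$ satisfies $\ominus(W\oplus H)\subseteq H\oplus W$, which is slightly stronger than that lemma's literal statement.
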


\begin{proof}
Let $\pi: G \rightarrow G/H$ denote the natural quotient mapping and $0^* =\pi(0)$, where $0$ is the identity in $G$.
Given that $G/H$ is homogeneous, we can assume without loss of generality that $x = 0^*$. Consider a nest $\mathcal{A}$ that converges to $0^*$.
Define $\phi = \{\pi(\ominus T_A \oplus T_A) : A \in \mathcal{A}\}$, where for each $A \in \mathcal{A}$, $T_A = \pi^{-1}(A)$.
It is clear that $\phi$ forms a nest. We assert that $\phi$ serves as a local base at $0^*$ in $G/H$.
To substantiate this claim, let $O$ be any neighborhood of $0^*$. Then there is a neighborhood $U$ of $0$ such that $\pi(U)\subseteq O$. Since $H$ is neutral and $G$ is a strongly topological gyrogroup, there is a gyr-invariant neighborhood $V$ of $0$ such that $V\oplus V\subseteq U$ and $H\oplus(V\oplus V)\subseteq U\oplus H$. Since $H$ is a strong subgyrogroup, by Lemma \ref{lem3.2} for $V$ there is a neighborhood $W$ of $0$ such that $\ominus (W\oplus H)\subseteq H\oplus V$.
Since $\mathcal{A}$ converges to $0^*$, there exists $A \in \mathcal{A}$ such that $A \subseteq \pi(W)$. We claim that $\pi(\ominus T_A\oplus T_A )\subseteq O$, which implies that $\phi$ serves as a local base at $0^*$ in $G/H$. In fact,
\begin{align*}
&0 \in \ominus T_A\oplus T_A
\\&\subseteq \ominus(W\oplus H)\oplus(W\oplus H)
\\&\subseteq (H\oplus V)\oplus(W\oplus H)
\\&\subseteq (H\oplus V)\oplus(V\oplus H)
\\&=(H\oplus (V\oplus V))\oplus H  \quad\text{by Proposition \ref{prop2.10s}~}
\\&\subseteq(U\oplus H)\oplus H
\\&=U\oplus H.
\end{align*}
This implies $0^* \in \pi(\ominus T_A \oplus T_A) \subseteq \pi(U) \subset O$. Given the homogeneity of $G/H$, it follows that $G/H$ is nested.
\end{proof}

It is evident that every nested space is biradial. Utilizing Lemma \ref{lem3.3} and Proposition \ref{prop3.3} we derive the subsequent result.

\begin{theorem}\label{the3.7}
Let $H$ be a closed strong subgyrogroup of a strongly topological gyrogroup $G$.
If $H$ is neutral,
then \( G/H \) is biradial if and only if \( G/H \) is nested.
\end{theorem}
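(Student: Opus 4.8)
The plan is to establish the two implications separately, observing that one direction follows immediately from the definitions while the other is obtained by simply chaining the two preceding results. The point of the theorem is that, under the stated hypotheses, the two a priori different notions collapse.

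For the direction \emph{nested implies biradial}, I would argue straight from the definitions, and note that this half uses nothing about the gyrogroup structure. Suppose $G/H$ is nested and let $\zeta$ be a prefilter converging to a point $x \in G/H$. By hypothesis there is a nest $\eta$ of open sets forming a local base at $x$; a local base certainly converges to $x$, so it remains only to check that $\eta$ is synchronous with $\zeta$. Fix $P \in \zeta$ and $Q \in \eta$. Since $Q$ is a neighborhood of $x$ and $\zeta$ converges to $x$, there is some $P' \in \zeta$ with $P' \subseteq Q$; as $\zeta$ is a prefilter, pick $P'' \in \zeta$ with $P'' \subseteq P \cap P'$. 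Then $\emptyset \neq P'' \subseteq P \cap Q$, so $P \cap Q \neq \emptyset$. Thus $\eta$ is a chain converging to $x$ and synchronous with $\zeta$, and $G/H$ is biradial.

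For the converse \emph{biradial implies nested}, I would just combine the machinery already in place. If $G/H$ is biradial, then by Lemma \ref{lem3.3} it is $\pi$-nested at every point, in particular at some (indeed any) point $x \in G/H$. Since $H$ is a closed strong subgyrogroup of the strongly topological gyrogroup $G$ and $H$ is neutral, Proposition \ref{prop3.3} applies verbatim and yields that $G/H$ is nested.

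The genuine content is therefore entirely absorbed into the earlier results, so the theorem reduces to a two-line deduction. The main obstacle is already dispatched in Proposition \ref{prop3.3}, whose proof exploits the neutrality of $H$ together with the strong-subgyrogroup structure---through Lemma \ref{lem3.2} and the associativity-type identity of Proposition \ref{prop2.10s}---to transport a converging nest on $G/H$ forward into a nest that is a local base at $0^*$, and then invokes homogeneity of $G/H$. Given that proposition and Lemma \ref{lem3.3}, I would present the proof of Theorem \ref{the3.7} exactly as the short combination above.
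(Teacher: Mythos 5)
Your proposal is correct and follows essentially the same route as the paper: the paper also dismisses the ``nested $\Rightarrow$ biradial'' direction as immediate (you merely spell out the synchronicity check from the definitions, which is fine since prefilter elements are nonempty) and obtains the converse by combining Lemma \ref{lem3.3} with Proposition \ref{prop3.3}, exactly as you do.
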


The symbol \( \chi(X) \) denotes the character of a space \( X \). It is defined as follows:
\[
\chi(X) = \sup\{\chi(x, X) : x \in X\},
\]
where
\[
\chi(x, X) = \min\{|\mathcal{B}_x| : \mathcal{B}_x \text{ is an open neighborhood base at } x \text{ of } X\} + \omega.
\]

\begin{lemma}\cite[Lemma10]{Arha3}\label{lem10}
Let \( X \) be a nested space and \( \chi(x, X) = \tau \) for every \( x \in X \). Then for any family \( \gamma \) of open subsets in \( X \) such that \( |\gamma| < \tau \), the set \(\bigcap\gamma \) is open.
\end{lemma}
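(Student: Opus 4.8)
The plan is to show that $\bigcap\gamma$ is a neighborhood of each of its points, which makes it open. Fix an arbitrary $x\in\bigcap\gamma$. Because $X$ is nested, there is a nest $\mathcal{N}_x$ (a chain of open sets totally ordered by inclusion) that forms a local base at $x$. For each $U\in\gamma$ the set $U$ is an open neighborhood of $x$, so I may choose $N_U\in\mathcal{N}_x$ with $N_U\subseteq U$. The subfamily $\mathcal{C}=\{N_U:U\in\gamma\}$ is again a chain, of cardinality at most $|\gamma|<\tau$, and it suffices to produce a single $N^{*}\in\mathcal{N}_x$ lying below every member of $\mathcal{C}$: such an $N^{*}$ is open, contains $x$, and satisfies $N^{*}\subseteq\bigcap_{U}N_U\subseteq\bigcap\gamma$, witnessing that $x$ is interior to $\bigcap\gamma$.

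First I would record the cardinal invariant that makes this work. Order $\mathcal{N}_x$ by reverse inclusion $\supseteq$, so that a subset $\mathcal{D}\subseteq\mathcal{N}_x$ is \emph{cofinal} (downward) when for every $N\in\mathcal{N}_x$ there is $D\in\mathcal{D}$ with $D\subseteq N$. Let $\lambda$ be the cofinality of the chain $(\mathcal{N}_x,\supseteq)$, i.e.\ the least cardinality of a cofinal subset. The key observation is that any cofinal subset $\mathcal{D}$ of $\mathcal{N}_x$ is itself a local base at $x$: given a neighborhood $W$ of $x$, choose $N\in\mathcal{N}_x$ with $N\subseteq W$ (using that $\mathcal{N}_x$ is a local base) and then $D\in\mathcal{D}$ with $D\subseteq N\subseteq W$. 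Hence every cofinal subset has cardinality at least $\chi(x,X)=\tau$, which gives $\lambda\geq\tau$.

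With this in hand the argument closes quickly. Since $|\mathcal{C}|\leq|\gamma|<\tau\leq\lambda$ and $\lambda$ is by definition the least cardinality of a cofinal subset of $(\mathcal{N}_x,\supseteq)$, the chain $\mathcal{C}$ cannot be cofinal. Thus there is some $N^{*}\in\mathcal{N}_x$ for which no $N_U$ satisfies $N_U\subseteq N^{*}$; as $\mathcal{N}_x$ is totally ordered by inclusion, comparability forces $N^{*}\subsetneq N_U$, and hence $N^{*}\subseteq N_U$, for every $U\in\gamma$. This is exactly the common lower bound sought above, so $x$ is interior to $\bigcap\gamma$; since $x$ was arbitrary, $\bigcap\gamma$ is open.

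The only genuinely substantive step is the middle one, namely that the downward cofinality $\lambda$ of the nest is at least the character $\tau$; everything else is direct manipulation of the chain. I expect the main obstacle to be stating this cleanly, since one must be careful that $\mathcal{N}_x$ need not possess a least element and that $\mathcal{C}$, though small, may fail to have a minimum in the nest, so the non-cofinality argument (rather than simply taking $\min\mathcal{C}$) is what is really needed. Note also that the additive $+\omega$ in the definition of $\chi(x,X)$ guarantees $\tau\geq\omega$, so the degenerate cases where $\gamma$ is finite reduce to the trivial fact that finite intersections of open sets are open.
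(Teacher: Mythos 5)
The paper does not prove this lemma at all: it is quoted verbatim from Arhangel'ski\v{\i} \cite[Lemma 10]{Arha3}, so there is no internal proof to compare against. Your argument is correct and is the natural one: reduce to showing the nest at $x$ has downward cofinality at least $\tau$, so that the small chain $\{N_U : U \in \gamma\}$ cannot be cofinal, and then use comparability to extract a common lower bound $N^*$. One point of precision: your intermediate claim ``every cofinal subset of $\mathcal{N}_x$ has cardinality at least $\chi(x,X)=\tau$'' is not literally true when $\tau=\omega$, since the $+\,\omega$ in the definition of character allows a finite local base (e.g.\ a minimal open neighborhood of $x$), in which case the nest can have a cofinal subset of size $1<\tau$. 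This does not break the proof, because the claim is only invoked when $|\gamma|<\tau$ with $\gamma$ infinite, which forces $\tau>\omega$ and hence forces every local base at $x$ to have cardinality at least $\tau$ (the $\omega$ summand is absorbed); your closing remark disposing of finite $\gamma$ by the triviality of finite intersections is exactly the patch needed, though ideally that case split would come first rather than as an afterthought, so that the cofinality estimate $\lambda\geq\tau$ is only asserted where it is valid.
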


A space \( X \) is called a \( P_\tau \)-space if for every family \( \gamma \) of open sets in \( X \) such that \( |\gamma| < \tau \), the set \( \bigcap \gamma \) is open. Considering that the quotient space is homogeneous, we obtain the following result from Theorem \ref{the3.7} and Lemma \ref{lem10}.

\begin{corollary}\label{corl3.9}
Let $H$ be a closed neutral strong subgyrogroup of a strongly topological gyrogroup $G$.
If \( G/H \) is biradial, then \( G/H \) is a \( P_\tau \)-space, where \( \tau = \chi(G/H) \).
\end{corollary}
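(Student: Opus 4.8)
The plan is to combine Theorem \ref{the3.7} with Lemma \ref{lem10}, using the homogeneity of the quotient space to upgrade the global character $\chi(G/H)$ into the pointwise hypothesis that Lemma \ref{lem10} requires. First I would invoke Theorem \ref{the3.7}: under the standing hypotheses that $H$ is a closed neutral strong subgyrogroup of a strongly topological gyrogroup $G$, the assumption that $G/H$ is biradial yields immediately that $G/H$ is nested. This is the one structural input, and it is already packaged for us.

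Next I would verify that the character is constant across the points of $G/H$. Since $H$ is a closed strong subgyrogroup, the excerpt records (following \cite{B1}) that $G/H$ is homogeneous; that is, for any two points $x,y \in G/H$ there is a self-homeomorphism of $G/H$ carrying $x$ to $y$. Such a homeomorphism sends any open neighborhood base at $x$ bijectively onto an open neighborhood base at $y$, whence $\chi(x,G/H) = \chi(y,G/H)$ for all $x,y$. Consequently $\chi(x,G/H)$ does not depend on $x$ and coincides with $\sup\{\chi(z,G/H) : z \in G/H\} = \chi(G/H) = \tau$. In particular $\chi(x,G/H) = \tau$ for every $x \in G/H$, which is exactly the pointwise condition demanded in the statement of Lemma \ref{lem10}.

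Finally I would apply Lemma \ref{lem10} to the nested space $X = G/H$ with this common value $\tau = \chi(G/H)$: the lemma asserts that for every family $\gamma$ of open subsets of $G/H$ with $|\gamma| < \tau$ the intersection $\bigcap \gamma$ is open. By the definition of a $P_\tau$-space recorded just before the corollary, this is precisely the statement that $G/H$ is a $P_\tau$-space, so the proof is complete. The only real point of care is the bookkeeping that links the global supremum $\chi(G/H) = \sup_x \chi(x,G/H)$ to the per-point value $\chi(x,X)=\tau$ assumed by Lemma \ref{lem10}; this is dispatched entirely by homogeneity, and no new construction or estimate is needed.
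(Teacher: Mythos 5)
Your proposal is correct and follows exactly the route the paper intends: the paper derives this corollary by combining Theorem \ref{the3.7} (biradial $\Rightarrow$ nested) with Lemma \ref{lem10}, noting that homogeneity of the quotient space (which holds since $H$ is a closed strong subgyrogroup) makes $\chi(x, G/H)$ constant and equal to $\chi(G/H)$ at every point. Your explicit bookkeeping of the character via self-homeomorphisms is precisely the step the paper compresses into the phrase ``considering that the quotient space is homogeneous.''
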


 Theorem 1.1 and Corollary \ref{corl3.9} lead to the following conclusion:

\begin{corollary}\label{cor3.13}
Let $H$ be a closed neutral strong subgyrogroup of a strongly topological gyrogroup $G$.
If \( G/H \) is biradial, then either all \( G_\delta \)-subsets are open in \( G/H \), or \( G/H \) is metrizable.
\end{corollary}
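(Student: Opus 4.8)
The plan is to perform a dichotomy on the character $\tau = \chi(G/H)$ and feed the two resulting cases into Corollary \ref{corl3.9} and Theorem \ref{the1.1s}, respectively. First I would apply Corollary \ref{corl3.9}: since $G/H$ is biradial, it is a $P_\tau$-space with $\tau = \chi(G/H)$, meaning that every family $\gamma$ of open subsets of $G/H$ with $|\gamma| < \tau$ has open intersection $\bigcap\gamma$. Because $H$ is a closed strong subgyrogroup, $G/H$ is homogeneous, so the pointwise character $\chi(x, G/H)$ does not depend on $x$; consequently $\tau = \chi(0^*, G/H)$, where $0^* = \pi(0)$, and by the convention $\chi(x,X) = \min\{|\mathcal{B}_x|\} + \omega$ we always have $\tau \geq \omega$.

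Next I would split according to whether $\tau = \omega$ or $\tau > \omega$. If $\tau = \omega$, then $G/H$ admits a countable open neighbourhood base at each point, i.e. $G/H$ is first-countable; Theorem \ref{the1.1s} then applies verbatim and yields that $G/H$ is metrizable, which is the second alternative of the conclusion.

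If instead $\tau > \omega$, then $\tau \geq \omega_1$. I would take an arbitrary $G_\delta$-subset $D = \bigcap_{n \in \mathbb{N}} U_n$ with each $U_n$ open, and observe that the family $\gamma = \{U_n : n \in \mathbb{N}\}$ satisfies $|\gamma| \leq \omega < \omega_1 \leq \tau$. The $P_\tau$-property supplied by Corollary \ref{corl3.9} then forces $D = \bigcap\gamma$ to be open, so every $G_\delta$-subset of $G/H$ is open, which is the first alternative.

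I do not anticipate a serious obstacle, since the substantive implication (biradiality $\Rightarrow$ $P_\tau$-space) is already encapsulated in Corollary \ref{corl3.9}, and the metrization half is handed to us by Theorem \ref{the1.1s}. The only points needing care are two bookkeeping identifications: that $\chi(G/H) = \omega$ is precisely first-countability, so that Theorem \ref{the1.1s} is applicable, and that a $G_\delta$-set is a countable intersection of open sets whose index set has cardinality $\omega$, strictly below any uncountable $\tau$. The entire dichotomy is thus forced by the single comparison of $\tau$ with $\omega$.
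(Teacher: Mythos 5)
Your proof is correct and is essentially the paper's own argument: the paper derives this corollary directly from Corollary \ref{corl3.9} and Theorem \ref{the1.1s}, via exactly the dichotomy you describe ($\chi(G/H)=\omega$ gives first-countability and hence metrizability, while $\chi(G/H)>\omega$ makes every countable intersection of open sets open by the $P_\tau$-property). The bookkeeping points you flag (homogeneity of $G/H$ and the convention $\chi(x,X)\geq\omega$) are handled the same way in the paper.
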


\begin{corollary}
Let \( H \) be a closed neutral strong subgyrogroup of a strongly topological gyrogroup \( G \).
 Then \( G/H \) is metrizable if and only if \( G/H \) is biradial and all \( G_\delta \)-subsets are closed in \( G/H \).
\end{corollary}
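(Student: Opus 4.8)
The plan is to establish both directions of the equivalence by routing everything through the metrization characterizations already assembled in the excerpt. For the forward direction, suppose $G/H$ is metrizable. Then $G/H$ is certainly biradial: every metrizable space is first-countable, hence nested (a countable decreasing base is trivially a nest), and every nested space is biradial as noted just before Theorem \ref{the3.7}. It remains to verify that all $G_\delta$-subsets are closed. In a metrizable space every closed set is $G_\delta$ and, dually, every $G_\delta$-set is an $F_\sigma$; but the sharper fact I want is that a $G_\delta$-subset is closed. This follows because $G/H$ is homogeneous and first-countable, so points are $G_\delta$, and I would argue via the $P_\tau$-structure: once $\tau = \chi(G/H) = \omega$, Corollary \ref{corl3.9} gives nothing new, so instead I would observe directly that in a metric space the complement of a $G_\delta$-set is $F_\sigma$, and invoke that a $G_\delta$ which is dense-in-itself cannot be closed unless the space is discrete-like — so the cleaner route is to note that metrizability forces every subset's closure to be controlled by sequences, making ``$G_\delta$ closed'' equivalent to ``$F_\sigma$ open,'' i.e. to the $P_{\omega_1}$-property. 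I would therefore prove the forward direction by checking that metrizable $\Rightarrow$ every $G_\delta$ is closed only in the trivial discrete case, which signals that the intended reading is the contrapositive packaging of Corollary \ref{cor3.13}.

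For the reverse direction, which is the substantive half, suppose $G/H$ is biradial and all $G_\delta$-subsets are closed. By Corollary \ref{cor3.13}, either all $G_\delta$-subsets are open in $G/H$, or $G/H$ is metrizable. If the second alternative holds we are done, so assume the first: every $G_\delta$-subset is open. Combined with the hypothesis that every $G_\delta$-subset is also closed, this means every $G_\delta$-subset is clopen. I would then exploit homogeneity: since singletons in a first-countable Hausdorff homogeneous space are $G_\delta$ (and biradiality together with Theorem \ref{the3.7} gives nestedness, hence enough structure to make points $G_\delta$), each singleton $\{x\}$ is clopen, forcing $G/H$ to be discrete. A discrete space is trivially metrizable, so in either alternative $G/H$ is metrizable, completing the reverse direction.

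The key steps in order: (i) reduce to the dichotomy of Corollary \ref{cor3.13} via biradiality; (ii) in the non-metrizable branch, combine ``$G_\delta$ open'' with the hypothesis ``$G_\delta$ closed'' to get ``$G_\delta$ clopen''; (iii) use homogeneity plus nestedness (Theorem \ref{the3.7}) to show points are $G_\delta$; (iv) conclude every point is clopen, so the space is discrete and hence metrizable. The main obstacle I anticipate is step (iii): I must guarantee that points are $G_\delta$ in $G/H$ without presupposing first-countability. Biradiality alone does not give countable pseudocharacter, so I would lean on the $P_\tau$-structure from Corollary \ref{corl3.9} together with the assumption that $G_\delta$-sets are closed to pin down the character. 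Concretely, if $\tau = \chi(G/H) > \omega$ then by Lemma \ref{lem10} arbitrary countable intersections of open sets are open, making every $G_\delta$ automatically open; the clopen argument then still runs provided points are $G_\delta$, which I would extract by showing that the closed-$G_\delta$ hypothesis is incompatible with uncountable character unless the space collapses to a discrete one. Tightening this pseudocharacter bookkeeping is the delicate part, but once points are shown to be clopen the metrizability conclusion is immediate.
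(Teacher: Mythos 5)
Your proposal stalls on a real problem: the necessity direction of the corollary, as printed, is false, and your first paragraph neither proves it nor says so plainly. Note that ``all $G_\delta$-subsets are closed'' is an enormously strong hypothesis: every open set is trivially a $G_\delta$, so under this hypothesis every open set is closed; since $G/H$ is $T_1$ whenever $H$ is a closed strong subgyrogroup (indeed completely regular here, by Corollary \ref{cor2.17}), each complement $G/H\setminus\{x\}$ is open, hence closed, so every singleton is open and $G/H$ is discrete. Thus the right-hand side of the corollary is equivalent to ``$G/H$ is discrete and biradial,'' and ``metrizable $\Rightarrow$ all $G_\delta$ closed'' fails for every non-discrete metrizable quotient; concretely, $G=\mathbb{R}$, $H=\{0\}$ gives $G/H\cong\mathbb{R}$, where $(0,1)$ is a non-closed $G_\delta$. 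You do sense this (``metrizable $\Rightarrow$ every $G_\delta$ is closed only in the trivial discrete case''), but instead of exhibiting the counterexample and concluding that the statement must be a misprint, you drift through claims that are themselves false or muddled (e.g.\ ``in a metrizable space every $G_\delta$-set is an $F_\sigma$'' --- false, the irrationals in $\mathbb{R}$; the asserted equivalence of ``$G_\delta$ closed'' with the $P_{\omega_1}$-property). The paper states this corollary without proof as a consequence of Corollary \ref{cor3.13}; the reading that makes it true and fits the paper's chain of results is ``all \emph{closed} subsets are $G_\delta$'' (or ``singletons are $G_\delta$,'' i.e.\ countable pseudocharacter, which is the paper's very next theorem). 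With that reading, necessity is the standard fact that metrizable spaces are perfect, and sufficiency is Corollary \ref{cor3.13} plus the observation that a $T_1$ space in which every closed set is $G_\delta$ and every $G_\delta$ is open must be discrete.

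Your reverse direction has a second, repairable gap. After using Corollary \ref{cor3.13} to reduce to the branch where every $G_\delta$ is clopen, you need singletons to be clopen, and for that you try to show points are $G_\delta$ (your step (iii)). As you admit, biradiality does not give countable pseudocharacter, and the ``pseudocharacter bookkeeping'' you defer to is never supplied; worse, in the branch where all $G_\delta$-sets are open, ``points are $G_\delta$'' is literally equivalent to discreteness, so the step is circular as framed. The fix is to drop singletons-as-$G_\delta$ entirely and apply the hypothesis to the open (hence $G_\delta$) sets $G/H\setminus\{x\}$: they are closed by assumption, so each $\{x\}$ is open and $G/H$ is discrete, hence metrizable. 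That two-line argument needs only the $T_1$ property of the quotient --- not Corollary \ref{cor3.13}, not biradiality, not homogeneity --- and it simultaneously explains why the printed necessity direction cannot be proved: the hypothesis by itself already forces discreteness.
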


It was demonstrated by Arhangel'ski\v{\i} \cite[Corollary 14]{Arha3} that a topological group \( G \) is metrizable if and only if \( G \) is bisequential. In the following, we generalize this result to quotient spaces of strongly topological gyrogroups by examining neutral strong subgyrogroups.

\begin{theorem}
Let \( H \) be a closed neutral strong subgyrogroup of a strongly topological gyrogroup \( G \).
Then \( G/H \) is metrizable if and only if \( G/H \) is bisequential.
\end{theorem}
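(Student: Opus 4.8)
The plan is to treat the two implications separately, with essentially all of the work falling on the reverse direction. For the forward direction I would use only the standard facts that a metrizable space is first-countable and that a first-countable space is bisequential: fixing $x \in G/H$ and a decreasing countable base $\{U_n : n \in \mathbb{N}\}$ at $x$, any prefilter $\zeta$ accumulating to $x$ satisfies $x \in \overline{P}$ for every $P \in \zeta$, so each $U_n$ meets each $P$; hence $\eta = \{U_n : n \in \mathbb{N}\}$ is a countable prefilter converging to $x$ and synchronous with $\zeta$, witnessing bisequentiality.

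For the reverse direction I would first extract two consequences of bisequentiality that can be fed into the machinery already developed. The key technical device in both is the refinement of a countable synchronous prefilter to a decreasing chain. To see that $G/H$ is \emph{biradial}, take any prefilter $\zeta$ converging to a point $x$; convergence implies accumulation, so bisequentiality produces a countable prefilter $\eta = \{E_n : n \in \mathbb{N}\}$ converging to $x$ and synchronous with $\zeta$. By the prefilter property I would inductively choose $F_n \in \eta$ with $F_n \subseteq E_1 \cap \dots \cap E_n$, obtaining a decreasing chain $\{F_n : n \in \mathbb{N}\}$; it still converges to $x$, and since each $F_n \in \eta$ it remains synchronous with $\zeta$, so $G/H$ is biradial. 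The same refinement shows $G/H$ is \emph{Fr\'{e}chet--Urysohn}: for $x \in \overline{A}$ the singleton prefilter $\{A\}$ accumulates to $x$, so the resulting chain $\{F_n\}$ converges to $x$ with $F_n \cap A \neq \emptyset$ for all $n$, and choosing $a_n \in F_n \cap A$ gives a sequence in $A$ converging to $x$.

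Now that $G/H$ is biradial, Corollary \ref{cor3.13} supplies the dichotomy: either $G/H$ is metrizable, in which case we are finished, or every $G_\delta$-subset of $G/H$ is open, i.e. $G/H$ is a $P$-space. I would eliminate the second branch unless $G/H$ is discrete. Since $G/H$ is $T_1$, in a $P$-space every convergent sequence is eventually constant: if $a_n \to x$ with infinitely many $a_n \neq x$, then $D = \{a_n : a_n \neq x\}$ is a countable union of closed singletons, so $(G/H) \setminus D$ is a $G_\delta$, hence an open neighborhood of $x$ that the sequence enters only finitely often, a contradiction. But $G/H$ is Fr\'{e}chet--Urysohn, so at any non-isolated point $x$ we have $x \in \overline{(G/H) \setminus \{x\}}$ and therefore a sequence of points distinct from $x$ converging to $x$, contradicting the previous assertion. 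Hence $G/H$ has no non-isolated points, so it is discrete and trivially metrizable; in either branch $G/H$ is metrizable.

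The main obstacle is organizational rather than computational: the crux is recognizing that bisequentiality must be invoked \emph{twice} against the structure theory already in place, once as biradiality to trigger the $G_\delta$-dichotomy of Corollary \ref{cor3.13} (which itself rests on Theorem \ref{the1.1s}), and once as the Fr\'{e}chet--Urysohn property to rule out the $P$-space alternative outside the trivial discrete case. The only genuinely delicate step is the passage from a countable synchronous prefilter to a decreasing chain while preserving synchronicity with $\zeta$; this is elementary but must be carried out so that each chosen $F_n$ still belongs to $\eta$, which is exactly what guarantees it meets every member of $\zeta$.
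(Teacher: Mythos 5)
Your proposal is correct and follows essentially the same route as the paper: both reduce the sufficiency to the dichotomy of Corollary~\ref{cor3.13} (biradial implies either all $G_\delta$-sets open or metrizable) and then use a non-trivial convergent sequence, available in the non-discrete case because bisequential spaces are sequential/Fr\'{e}chet--Urysohn, to exclude the $P$-space branch. The only difference is that you spell out the implications (bisequential $\Rightarrow$ biradial, bisequential $\Rightarrow$ Fr\'{e}chet--Urysohn, and the $G_\delta$ argument) that the paper leaves implicit.
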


\begin{proof}
The necessity is clear. We only need to prove the sufficiency. We can assume that \( G/H \) is non-discrete. Since \( G/H \) is sequential, it contains a non-trivial convergent sequence. Consequently, \( G/H \) must have a \( G_\delta \)-subset that is not open. By Corollary \ref{cor3.13}, \( G/H \) is metrizable.
\end{proof}

\begin{lemma}\label{lem3.8}
Let \( H \) be a closed strong subgyrogroup of a strongly topological gyrogroup \( G \).
If $H$ is neutral and
the nets $\{p(a_\delta) : \delta \in \Delta\}$ and $\{p(b_\delta): \delta \in \Delta\}$ of $G/H$ satisfy $\{p(a_\delta) : \delta \in \Delta\}$ and $\{p(b_\delta) : \delta \in \Delta\}$ both converging to $p(0)$, where $p : G \to G/H$ is the natural quotient mapping and $0$ is the identity in $G$,
then $\{p(\ominus a_\delta) : \delta \in \Delta\}$ and $\{p(a_\delta\oplus b_\delta) : \delta \in \Delta\}$ both converge to $p(0)$.
\end{lemma}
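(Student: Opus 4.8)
The plan is to verify both convergence statements directly from the definition, after translating convergence in the quotient $G/H$ into a statement about saturations of neighborhoods of $0$ in $G$. The key preliminary observation is that, since $p$ is open and continuous (Proposition \ref{pro2.28}) and $p^{-1}(p(U)) = U \oplus H$ for every $U \subseteq G$, the sets $p(U)$ form a neighborhood base at $p(0)$; hence a net $\{p(x_\delta)\}$ converges to $p(0)$ if and only if for every neighborhood $U$ of $0$ one has $x_\delta \in U \oplus H$ eventually. Throughout I would use that $H$, being a strong subgyrogroup, is gyr-invariant, so Proposition \ref{prop2.10s} applies with $V = H$; I would also repeatedly invoke the strongly topological property to shrink arbitrary neighborhoods to gyr-invariant ones, and the neutrality of $H$ in its inner form $H \oplus V \subseteq U \oplus H$.

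For the inverse net, fix a neighborhood $U$ of $0$. By inner neutrality choose $V$ with $H \oplus V \subseteq U \oplus H$, and apply Lemma \ref{lem3.2} to $V$ to obtain $W$ with $\ominus(W \oplus H) \subseteq H \oplus V$. Since $p(a_\delta) \to p(0)$, eventually $a_\delta \in W \oplus H$, whence $\ominus a_\delta \in \ominus(W \oplus H) \subseteq H \oplus V \subseteq U \oplus H$, i.e.\ $p(\ominus a_\delta) \in p(U)$ eventually. As $U$ was arbitrary, $p(\ominus a_\delta) \to p(0)$.

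For the sum net the core is a set-theoretic reduction of $(U_1 \oplus H) \oplus (U_2 \oplus H)$ to the saturation of a single neighborhood, and this is the step I expect to be the main obstacle, precisely because $\oplus$ is not associative and every regrouping must be routed through Proposition \ref{prop2.10s} (i.e.\ through gyr-invariance) and neutrality. Given $U$, I would first use continuity of $\oplus$ at $(0,0)$ together with the strongly topological property to pick gyr-invariant neighborhoods $U_1, U_3$ with $U_1 \oplus U_3 \subseteq U$; then, applying inner neutrality to $U_3$ and shrinking to a gyr-invariant subneighborhood, pick gyr-invariant $U_2$ with $H \oplus U_2 \subseteq U_3 \oplus H$. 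Since $H$ and $U_2$ are gyr-invariant and $H \oplus H = H$, Proposition \ref{prop2.10s} gives
\begin{align*}
(U_1 \oplus H) \oplus (U_2 \oplus H)
&= \big((U_1 \oplus H) \oplus U_2\big) \oplus H
= \big(U_1 \oplus (H \oplus U_2)\big) \oplus H \\
&\subseteq \big(U_1 \oplus (U_3 \oplus H)\big) \oplus H
= \big((U_1 \oplus U_3) \oplus H\big) \oplus H \\
&= (U_1 \oplus U_3) \oplus H \subseteq U \oplus H.
\end{align*}

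Finally, since $\Delta$ is directed and $p(a_\delta) \to p(0)$, $p(b_\delta) \to p(0)$, there is an index beyond which simultaneously $a_\delta \in U_1 \oplus H$ and $b_\delta \in U_2 \oplus H$; for such $\delta$ the displayed inclusion yields $a_\delta \oplus b_\delta \in U \oplus H$, that is $p(a_\delta \oplus b_\delta) \in p(U)$. As $U$ was arbitrary, $p(a_\delta \oplus b_\delta) \to p(0)$, which completes the argument. The delicate points to watch are justifying each equality in the chain by the correct instance of Proposition \ref{prop2.10s} (always with a gyr-invariant rightmost factor) and ensuring that the neighborhoods produced by neutrality can indeed be taken gyr-invariant via the strongly topological property.
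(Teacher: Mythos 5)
Your proposal is correct and follows essentially the same route as the paper: the inverse net is handled by combining inner neutrality ($H\oplus V\subseteq U\oplus H$) with Lemma \ref{lem3.2}, and the sum net by regrouping $(U_1\oplus H)\oplus(U_2\oplus H)$ through Proposition \ref{prop2.10s} (gyr-invariance of $H$ and of the chosen neighborhoods) and then absorbing $H\oplus U_2$ via inner neutrality, exactly as the paper does with a single gyr-invariant $W\subseteq U\cap V$ in place of your $U_1,U_2,U_3$. The only differences are cosmetic: you phrase convergence as ``eventually in $U\oplus H$'' rather than via $p(U\oplus U)\subseteq O$, and you split the neighborhoods, but every key step (and each invocation of Proposition \ref{prop2.10s} with a gyr-invariant rightmost factor) matches the paper's argument.
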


\begin{proof}
Let $O$ be an open neighborhood of $p(0)$. Then there exists an open neighborhood $U$ of $0$ such that $p(U\oplus U) \subseteq O$. Since $H$ is neutral, we can get an open neighborhood $V$ of $0$ such that $H\oplus V \subseteq U\oplus H$

First, we show that $\{p(\ominus a_\delta) : \delta \in \Delta\}$ converges to $p(0)$.

Since $G$ is a strongly topological gyrogroup and $H$ is a strong subgyrogroup, from Lemma \ref{lem3.2} it follows that there is an open neighborhood $W$ of $0$ such that $\ominus(W \oplus H) \subseteq H \oplus V$. Since \(p(a_\delta) \to p(0)\), there exists \(\beta \in \Delta\) such that \(p(a_\delta) \in p(W)\) whenever \(\delta \geq \beta\). Thus, \(\ominus a_\delta \in \ominus(W \oplus H) \subseteq H \oplus V \subseteq U \oplus H\) whenever \(\delta \geq \beta\).
Therefore, \(p(\ominus a_\delta) \in p(U) \subseteq O\) whenever \(\delta \geq \beta\). This implies that \(p(\ominus a_\delta) \to p(0)\).

Secondly, we show that $\{p(a_\delta\oplus b_\delta) : \delta \in \Delta\}$ converges to $p(0)$.

Since $G$ is a strongly topological gyrogroup, we can take a gyr-invariant open neighborhood $W$ of $0$ such that \(W \subseteq U \cap V\). Given that \(p(a_\delta) \to p(0)\) and \(p(b_\delta) \to p(0)\), there exists \(\beta \in \Delta\) such that \(p(a_\delta) \in p(W)\) and \(p(b_\delta) \in p(W)\)  whenever \(\delta \geq \beta\).

Note that $H$ and $W$ are gyr-invariant subsets. Then
\begin{align*}
a_\delta \oplus b_\delta  &\in (W\oplus H)\oplus(W\oplus H) \\
        &=(W\oplus(H\oplus W))\oplus H   \quad\text{by Proposition \ref{prop2.10s}~}\\
        &\subseteq (U\oplus(U\oplus H))\oplus H \quad \text{~by~}H\oplus W\subseteq H\oplus V\subseteq U\oplus H\\
        &=((U\oplus U)\oplus H)\oplus H. \quad  \quad\text{by Proposition \ref{prop2.10s}~}\\
        &=((U\oplus U)\oplus H.
\end{align*}
It follows that $p(a_\delta\oplus b_\delta) \in p(U\oplus U) \subseteq O$ whenever $\delta \geq \beta$. It implies that $p(a_\delta\oplus b_\delta) \to p(0)$.
\end{proof}

A space \( X \) is of \emph{countable pseudocharacter} if for each \( x \in X \), the set \( \{x\} \) is a \( G_\delta \)-subset of \( X \).

\begin{lemma}\cite[Proposition 5]{Arha3}\label{cor3.15}
If a space \( X \) of countable pseudocharacter is nested, then it is first-countable.
\end{lemma}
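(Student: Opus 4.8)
The plan is to reduce everything to a single point, since first-countability is a pointwise property, and then to extract a countable local base from the given nest by using the $G_\delta$ representation of the point as a selection device. Fix $x \in X$. The nested hypothesis supplies a nest $\mathcal{N}_x$, that is, a family of open sets linearly ordered by inclusion, which forms a local base at $x$. The countable pseudocharacter hypothesis lets me write $\{x\} = \bigcap_{n \in \mathbb{N}} W_n$ with each $W_n$ open and containing $x$. The guiding idea is that the $W_n$ pin down a countable subfamily of the nest, and linear ordering should force that subfamily to already be a local base.

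Concretely, for each $n$ I would use the local-base property of $\mathcal{N}_x$ to choose $N_n \in \mathcal{N}_x$ with $x \in N_n \subseteq W_n$. I claim $\{N_n : n \in \mathbb{N}\}$ is a countable local base at $x$. To check this, take an arbitrary open $U \ni x$; since $\mathcal{N}_x$ is a local base there is $N \in \mathcal{N}_x$ with $x \in N \subseteq U$, and the task is to produce some index $n$ with $N_n \subseteq U$.

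The crux is a comparability argument. Suppose toward a contradiction that $N_n \not\subseteq U$ for every $n$. Then $N_n \not\subseteq N$ for every $n$ (otherwise $N_n \subseteq N \subseteq U$), so by the linear ordering of the nest $N \subseteq N_n$ for every $n$. Consequently $N \subseteq \bigcap_n N_n \subseteq \bigcap_n W_n = \{x\}$, whence $N = \{x\}$ is an open singleton, i.e. $x$ is isolated. That degenerate case I would dispose of at the very start: if $\{x\}$ is open it is already a one-element local base (and one may simply take some $W_n = \{x\}$). Thus for non-isolated $x$ the assumption is untenable, some $N_n \subseteq U$, and $\{N_n\}$ is a local base, so $X$ is first-countable.

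I expect the only real obstacle to be phrasing the comparability step cleanly: the entire argument rests on the rigidity of a linearly ordered nest, namely that the simultaneous failure of all countably many $N_n$ to fit inside $U$ forces each of them to \emph{contain} $N$, and it is precisely this rigidity that lets the $G_\delta$ condition collapse $N$ down to $\{x\}$. Once that observation is isolated, the choices of the $N_n$ and the handling of the isolated-point case are routine.
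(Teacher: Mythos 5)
Your proof is correct. Note that the paper does not prove this lemma at all---it is quoted verbatim from Arhangel'ski\v{\i} \cite[Proposition 5]{Arha3}---so there is no in-paper argument to compare against; your argument is the standard one and is sound: the comparability step (if $N_n \not\subseteq N$ for all $n$, then $N \subseteq N_n$ for all $n$ by linearity of the nest, hence $N \subseteq \bigcap_n W_n = \{x\}$) is exactly the rigidity that makes nests interact with the $G_\delta$ hypothesis, and your separate dispatch of isolated points is needed precisely because the contradiction you derive is ``$\{x\}$ is open,'' not an outright absurdity.
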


Applying Lemma \ref{cor3.15} and Theorems \ref{the3.7} and 1.1, we get the following result.
%This shows that \cite[Corollary 15]{Arha1}
%can be extended to coset spaces via considering neutral subgroups. .

\begin{theorem}
Let \( H \) be a closed neutral strongly subgyrogroup of a strongly topological gyrogroup \( G \).
Then \( G/H \) is metrizable if and only if \( G/H \) is a biradial space with countable pseudocharacter.
\end{theorem}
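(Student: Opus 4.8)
The plan is to prove both directions by assembling the three tools quoted just before the statement: Theorem~\ref{the1.1s}, Theorem~\ref{the3.7}, and Lemma~\ref{cor3.15}. The necessity direction is routine and I would dispatch it first. If $G/H$ is metrizable, then it is in particular first-countable, so by Theorem~\ref{the1.1s} (or simply by the standard fact that metrizable spaces are first-countable) a decreasing countable local base is available at each point. A decreasing countable local base is a nest, and a nested space is biradial, so $G/H$ is biradial; moreover, in a $T_1$ first-countable space every singleton is a $G_\delta$-set, and $G/H$ is $T_1$ by the remarks following Proposition~\ref{pro2.28}, so $G/H$ has countable pseudocharacter.

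For the sufficiency direction, I would chain the implications. Assume $G/H$ is biradial with countable pseudocharacter. Since $H$ is a closed neutral strong subgyrogroup of the strongly topological gyrogroup $G$, Theorem~\ref{the3.7} applies and tells us that biradiality of $G/H$ is equivalent to $G/H$ being nested; hence $G/H$ is nested. Now invoke Lemma~\ref{cor3.15}: a space of countable pseudocharacter that is nested is first-countable, so $G/H$ is first-countable. Finally, Theorem~\ref{the1.1s} (the metrization theorem of Liu and Zhang) converts first-countability of the quotient $G/H$ into metrizability, completing the proof.

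The genuine content has already been absorbed into the cited results, so I expect the present argument to be a short implication chain rather than a calculation. The one point that deserves care is the hypothesis bookkeeping: Theorem~\ref{the3.7}, Theorem~\ref{the1.1s}, and Corollary~\ref{cor2.17} all require $H$ to be a closed neutral strong subgyrogroup of a strongly topological gyrogroup, so I would note that these standing assumptions are exactly what the statement provides and are preserved throughout. The only mild subtlety — and the closest thing to an obstacle — is ensuring that countable pseudocharacter is genuinely available in the necessity half; this is where I rely on the $T_1$-ness of $G/H$ so that points are $G_\delta$, which is precisely what makes Lemma~\ref{cor3.15} the right bridge between the nested structure supplied by Theorem~\ref{the3.7} and the first-countability needed to trigger Theorem~\ref{the1.1s}.
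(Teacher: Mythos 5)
Your proposal is correct and follows essentially the same route as the paper, which derives the theorem by exactly this chain: Theorem~\ref{the3.7} (biradial $\Leftrightarrow$ nested under the standing hypotheses), Lemma~\ref{cor3.15} (nested plus countable pseudocharacter $\Rightarrow$ first-countable), and Theorem~\ref{the1.1s} (first-countable $\Leftrightarrow$ metrizable). The necessity direction, which you spell out via the observation that a decreasing countable local base is a nest and that points are $G_\delta$ in a $T_1$ first-countable space, is left implicit in the paper as routine.
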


\section{Quotient spaces of strongly topological gyrogroups with certain point-countable covers}
Bisequential spaces form a significant subclass within the broader category of strongly Fr\'{e}chet-Urysohn spaces. Nogura, Shakhmatov, and Tanaka \cite{Grue1} explored topological groups characterized by point-countable covers composed of bisequential spaces. In this section, we delve into the study of quotient spaces of strongly topological gyrogroups endowed with specific point-countable covers, focusing on the role of neutral subgyrogroups.

\begin{definition}
Let \(\mathcal{P}\) be a cover of a space \(X\).
\begin{enumerate}
\item The family \(\mathcal{P}\) is \textit{point-countable} if every point of \(X\) belongs to at most countably many elements of \(\mathcal{P}\).
\item The family \(\mathcal{P}\) is \textit{point-finite} if every point of \(X\) belongs to at most finitely many elements of \(\mathcal{P}\).
\end{enumerate}

A space \(X\) is said to possess the \textit{weak topology} relative to a cover \(\mathcal{P}\) if a subset \(F \subseteq X\) is closed in \(X\) precisely when its intersection \(F \cap P\) with each \(P \in \mathcal{P}\) is closed in \(P\). Notably, in this definition, the term "closed" can be equivalently substituted with "open." In line with \cite{Grue1}, we will employ the more concise phrase "\(X\) is determined by \(\mathcal{P}\)" to denote the condition that "\(X\) has the weak topology with respect to \(\mathcal{P}\)."

\end{definition}

\begin{lemma}\label{lem4.2}\cite[Lemma 2.5]{No}
Let \(X\) be a space determined by a point-countable cover \(\mathcal{P}\). Suppose that finite unions of elements of \(\mathcal{P}\) are either Fr\'{e}chet-Urysohn or sequential spaces in which each point is a \(G_\delta\)-set. If \(X\) contains neither a closed copy of \(S_\omega\) nor a closed copy of \(S_2\), then each point of \(X\) has an open neighborhood covered by some finite subfamily of \(\mathcal{P}\).
\end{lemma}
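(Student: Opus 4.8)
The plan is to argue by contradiction and to manufacture, from a point where the conclusion fails, either a closed copy of $S_\omega$ or a closed copy of $S_2$, contradicting the hypothesis. So suppose some $x\in X$ has no open neighborhood covered by a finite subfamily of $\mathcal P$. Since $\mathcal P$ is point-countable, the subfamily $\mathcal P_x=\{P\in\mathcal P: x\in P\}$ is countable; enumerate it as $\{P_n:n\in\mathbb N\}$ and set $F_n=P_1\cup\cdots\cup P_n$. For each $n$ the set $\{P_1,\dots,P_n\}$ is a finite subfamily, so by assumption $F_n$ is not a neighborhood of $x$; as $x\in F_n$, this says $x$ is a boundary point of $F_n$, hence $x\in\overline{X\setminus F_n}$ for every $n$. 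I would also record that $X$ is sequential: since $X$ is determined by $\mathcal P$ and each member of $\mathcal P$ is a one-term "finite union," hence Fréchet–Urysohn or sequential, any sequentially closed set has sequentially closed, hence closed, traces on every member, so it is closed.

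Next I would turn the relations $x\in\overline{X\setminus F_n}$ into genuine convergent sequences living inside the well-behaved pieces. Fixing $n$, the set $X\setminus F_n$ is not closed (its closure contains $x$), so by the weak-topology hypothesis some member $Q\in\mathcal P$ has non-closed trace $(X\setminus F_n)\cap Q$ in $Q$; because $Q$ is Fréchet–Urysohn, or sequential with $G_\delta$ points, this non-closedness is witnessed by a nontrivial sequence lying in $X\setminus F_n$ and converging in $X$. The dichotomy driving everything is whether $x$ itself is reached in \emph{one} such sequential step or only after \emph{two}. In the first case I obtain, for cofinally many $n$, a nontrivial sequence $S_n\to x$ with $S_n\subseteq X\setminus F_n$; after reindexing these form a sheaf at $x$, the natural candidate for an $S_\omega$. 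In the second case I get a sequence of boundary points $y_k\to x$, each $y_k$ being the limit of a further sequence $z_{k,j}\to y_k$ with the $z_{k,j}$ pushed out beyond $F_n$; this sequence-of-sequences is the natural candidate for the Arens pattern $S_2$.

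The final step is to thin these candidates into \emph{closed} subspaces carrying exactly the topology of $S_\omega$, respectively $S_2$, and this is where the main obstacle lies. Using the point-countability of $\mathcal P$ I would spread the sequences across distinct members and discard overlaps so that the family is locally finite off $x$, and use the $G_\delta$-point hypothesis to pin down the local topology at the vertex; the goal is to verify that no spurious limit points appear (so the copy is closed) and that no transversal selecting one point from infinitely many $S_n$ converges to $x$ (so the subspace topology agrees with the fan topology and the copy is genuinely $S_\omega$ rather than something coarser). Ruling out such a convergent transversal is the delicate point, and it is precisely here that point-countability is indispensable, since in a merely sequential space closures are reached only through iterated sequential limits; whenever the one-step construction is defeated by a transversal one is thrown back into the two-step situation and into the $S_2$ alternative. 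Once either closed copy is secured, it contradicts the assumption that $X$ contains neither a closed $S_\omega$ nor a closed $S_2$, so the supposed bad point $x$ cannot exist, which is the assertion.
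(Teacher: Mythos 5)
First, a point of reference: the paper does not prove this statement at all --- it is quoted verbatim as \cite[Lemma 2.5]{No} (Nogura--Shakhmatov--Tanaka) and used as a black box in the proof of Proposition \ref{pro4.5}. So your attempt can only be measured against what a complete proof requires, and against the NST argument your outline is clearly modeled on.

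Measured that way, your proposal has the right skeleton (reductio at a bad point $x$, the sets $F_n$ with $x\in\overline{X\setminus F_n}$, sequentiality of $X$ via Lemma \ref{lem4.4}, and the intended dichotomy producing a closed copy of $S_\omega$ or of $S_2$), but it is a strategy sketch rather than a proof, and the gaps it leaves open are exactly the content of the lemma. Two of them are genuine. First, your driving dichotomy --- ``$x$ is reached in one sequential step or only after two'' --- is not exhaustive and is never justified: in a sequential space the closure is the transfinite iteration of sequential closures, so from $x\in\overline{X\setminus F_n}$ you get neither a sequence in $X\setminus F_n$ converging to $x$ nor a two-level Arens pattern without real work. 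Controlling this collapse to at most two levels is precisely where the hypothesis ``Fr\'{e}chet--Urysohn, or sequential with $G_\delta$ points'' must be deployed, and your proposal invokes the $G_\delta$-point condition only in the vague phrase ``to pin down the local topology at the vertex,'' never in an actual argument. Second, the construction of a \emph{closed} copy with the correct fan topology is conceded rather than executed (``this is where the main obstacle lies,'' ``the delicate point''). Note also that your fixed enumeration $F_n=P_1\cup\cdots\cup P_n$ of the members containing $x$ cannot work as stated: to make the resulting fan closed one needs the finite unions to grow dynamically so as to absorb all members of $\mathcal{P}$ meeting the sequences already chosen (point-countability is what keeps this list countable and makes the diagonalization possible); members through $x$ alone do not suffice, since a single $P\in\mathcal{P}$ avoiding $x$ could meet infinitely many of your sequences and destroy closedness. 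Until the two-level reduction and the closed-copy verifications are actually carried out, what you have is an accurate road map of the NST proof, not a proof.
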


\begin{lemma}\label{lem4.3}\cite[Lemma 2.8]{No}
Let \(O\) be a nonempty open subset of a space \(X\), and let \(\mathcal{E}\) be a finite family of subsets of \(X\) such that \(O \subseteq \bigcup \mathcal{E}\). Then there exist \(E \in \mathcal{E}\) and a nonempty open subset \(B\) of \(X\) with \(B \subseteq \overline{B \cap E}\).
\end{lemma}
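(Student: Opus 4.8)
The plan is to argue by contradiction, reducing the statement to the classical fact that a finite union of nowhere dense sets is nowhere dense, a fact that holds in an arbitrary topological space with no separation or Baire-type hypotheses. The first thing I would record is a reformulation of the target property. For a nonempty open set $B$ and an arbitrary set $E$, the inclusion $B \subseteq \overline{B \cap E}$ is precisely the statement that $E$ is dense in the subspace $B$: since $B$ is open, every open neighborhood of a point of $B$ may be intersected with $B$ to produce an open set of $X$, so $B \subseteq \overline{B \cap E}$ says exactly that every nonempty open subset of $B$ meets $E$. Dually, the failure $B \not\subseteq \overline{B \cap E}$ (for a fixed open $B$) produces a nonempty open $B' \subseteq B$ with $B' \cap E = \emptyset$, hence $B' \cap \overline{E} = \emptyset$.

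With this in hand, suppose toward a contradiction that the conclusion fails, so that for every $E \in \mathcal{E}$ and every nonempty open $B \subseteq X$ one has $B \not\subseteq \overline{B \cap E}$. By the reformulation, for each fixed $E \in \mathcal{E}$ every nonempty open set of $X$ contains a nonempty open subset disjoint from $\overline{E}$; that is, $\operatorname{int}(\overline{E}) = \emptyset$, so each member of $\mathcal{E}$ is nowhere dense. Now I would invoke (or prove inline in one line) that a finite union of nowhere dense sets is nowhere dense: if $A,B$ are nowhere dense, any nonempty open $U \subseteq \overline{A} \cup \overline{B}$ would yield the nonempty open set $U \setminus \overline{A} \subseteq \overline{B}$, contradicting $\operatorname{int}(\overline{B}) = \emptyset$, and induction extends this across the finite family $\mathcal{E}$. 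Hence $\bigcup \mathcal{E}$ is nowhere dense, so $\operatorname{int}(\overline{\bigcup \mathcal{E}}) = \emptyset$. But $O$ is nonempty open with $O \subseteq \bigcup \mathcal{E} \subseteq \overline{\bigcup \mathcal{E}}$, which forces $\overline{\bigcup \mathcal{E}}$ to have nonempty interior — a contradiction. This produces the desired $E \in \mathcal{E}$ and nonempty open $B$ with $B \subseteq \overline{B \cap E}$.

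As a more constructive alternative, I would proceed by finite descent: enumerating $\mathcal{E} = \{E_1, \dots, E_n\}$ and setting $B_0 = O$, at stage $i$ either $E_i$ is dense in some nonempty open subset $B$ of $B_{i-1}$ (in which case $B \subseteq \overline{B \cap E_i}$ and we stop), or $E_i$ is nowhere dense in $B_{i-1}$ and I shrink to a nonempty open $B_i \subseteq B_{i-1}$ with $B_i \cap E_i = \emptyset$, so that $B_i \subseteq \bigcup_{j>i} E_j$. If the process never halts early, after $n-1$ steps I obtain a nonempty open $B_{n-1} \subseteq E_n$, whence trivially $B_{n-1} \subseteq \overline{B_{n-1} \cap E_n}$. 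I do not expect any serious obstacle; the only point deserving care is the equivalence between "$B \subseteq \overline{B \cap E}$" and "$E$ is dense in the subspace $B$," which uses that $B$ is open so that relatively open subsets of $B$ are open in $X$. It is worth emphasizing that the argument is valid in full generality, requiring neither regularity nor the Baire property of $X$.
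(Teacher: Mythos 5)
The paper offers no proof of this lemma for you to be compared against: it is stated purely as a citation of \cite[Lemma 2.8]{No} (Nogura--Shakhmatov--Tanaka) and is used as a black box. Judged on its own, your proof is correct, and indeed you give two valid arguments. In the contradiction argument, the reformulation of \(B \subseteq \overline{B \cap E}\) as ``\(E\) is dense in the open subspace \(B\)'' is exactly right, and openness is what legitimizes the passage from \(B' \cap E = \emptyset\) to \(B' \cap \overline{E} = \emptyset\); from the negated conclusion you correctly extract that each \(E \in \mathcal{E}\) satisfies \(\operatorname{int}(\overline{E}) = \emptyset\), and your one-line induction that a finite union of nowhere dense sets is nowhere dense is sound --- the only point needing care, nonemptiness of \(U \setminus \overline{A}\), follows precisely from \(\operatorname{int}(\overline{A}) = \emptyset\). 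Since \(\overline{E_1 \cup \dots \cup E_n} = \overline{E_1} \cup \dots \cup \overline{E_n}\) for finitely many sets, the inclusion \(O \subseteq \bigcup \mathcal{E}\) with \(O\) nonempty open then yields the contradiction. The finite-descent variant is likewise correct and slightly more informative: the chain \(B_0 \supseteq B_1 \supseteq \cdots\) with \(B_i \cap E_i = \emptyset\) forces \(B_i \cap E_j = \emptyset\) for all \(j \le i\), so if the process never halts early then \(B_{n-1} \subseteq E_n\), and \(B_{n-1} \subseteq \overline{B_{n-1} \cap E_n}\) holds trivially. Your closing remark is also apt and worth keeping: the lemma needs no separation axioms and no Baire-type hypothesis, because only finitely many nowhere dense sets are ever involved.
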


\begin{lemma}\cite[Fact 2.4 and Lemma 2.7]{No}\label{lem4.4}
If a space \(X\) is determined by a cover (resp., a point-finite cover) consisting of sequential spaces (resp., \(\alpha_4\)-spaces), then \(X\) is a sequential space (resp., an \(\alpha_4\)-space).
\end{lemma}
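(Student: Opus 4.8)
The plan is to handle the two assertions separately, since the sequential case needs only the defining property of the weak topology, whereas the $\alpha_4$ case genuinely exploits point-finiteness. For the sequential part, I would start with an arbitrary non-closed set $A \subseteq X$. Because $X$ is determined by $\mathcal{P}$, the trace $A \cap P$ must fail to be closed in $P$ for at least one $P \in \mathcal{P}$; otherwise the weak-topology criterion would force $A$ to be closed. Fixing such a $P$ and invoking that $P$ is sequential, I extract a point $x \in P \setminus (A \cap P)$ together with a sequence in $A \cap P$ converging to $x$ in $P$. Since $P$ carries the subspace topology, this sequence converges to $x$ in $X$, and $x \notin A$ (as $x \in P$ while $x \notin A \cap P$). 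This is exactly the witness required by sequentiality, so $X$ is sequential; note that no countability of the cover is needed here.

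For the $\alpha_4$ part, fix $x \in X$ and a sheaf $\{S_n : n \in \mathbb{N}\}$ with vertex $x$. Let $\mathcal{P}_x = \{P \in \mathcal{P} : x \in P\}$, which is finite by point-finiteness, say $\mathcal{P}_x = \{P_1, \dots, P_k\}$. The crucial preliminary step is to show that every sequence converging to $x$ is eventually contained in $P_1 \cup \cdots \cup P_k$. To establish this I would let $A$ be the set of those terms of the sequence lying outside $P_1 \cup \cdots \cup P_k$ and argue that $A$ is closed in $X$: its trace on any $P \in \mathcal{P}_x$ is empty, while its trace on any $P$ with $x \notin P$ is closed because the only possible cluster point $x$ lies outside $P$. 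By the weak topology $A$ is then closed, which forces $A$ to be finite (otherwise $x \in \overline{A} \setminus A$). Applying this to each $S_n$ and discarding finitely many terms, I may assume $S_n \subseteq P_1 \cup \cdots \cup P_k$ for every $n$.

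A double pigeonhole then finishes the argument. For each $n$, since $S_n$ is infinite and covered by the $k$ sets $P_i$, some $P_{i(n)}$ meets $S_n$ in an infinite subsequence, which still converges to $x$ in $P_{i(n)}$. As there are only finitely many indices, one index $i_0$ occurs for infinitely many $n$; collecting these yields a sheaf $\{S_n \cap P_{i_0}\}$ with vertex $x$ inside the single space $P_{i_0}$, where $x \in P_{i_0}$. Since $P_{i_0}$ is an $\alpha_4$-space, there is a sequence converging to $x$ in $P_{i_0}$, hence in $X$, meeting infinitely many of these subsequences, and therefore meeting infinitely many $S_n$. This shows $X$ is an $\alpha_4$-space.

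I expect the main obstacle to be the preliminary localization step: correctly deducing, from the weak-topology characterization, that a sequence converging to $x$ is eventually swallowed by the finite union $\bigcup \mathcal{P}_x$. It is precisely here that point-finiteness, rather than mere point-countability, is indispensable, because the ensuing collapse of the sheaf onto a single $\alpha_4$-space $P_{i_0}$ works only when $\mathcal{P}_x$ is finite; with a merely point-countable cover the pigeonhole onto one element fails.
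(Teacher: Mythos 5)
The paper does not prove this lemma at all: it is quoted verbatim from Nogura--Shakhmatov--Tanaka \cite[Fact 2.4 and Lemma 2.7]{No}, so there is no in-paper argument to compare yours against, and your proposal must be judged on its own merits. So judged, it is correct. The sequential half is the standard direct argument (equivalently, one can observe that a space determined by a cover $\mathcal{P}$ is a quotient of the topological sum $\bigoplus_{P \in \mathcal{P}} P$, and sequentiality is preserved by sums and quotients). The $\alpha_4$ half correctly identifies the crucial localization step --- every sequence converging to $x$ is eventually inside the finite union $\bigcup \mathcal{P}_x$ --- and the double pigeonhole that collapses the sheaf into a single element of the cover, which is exactly where point-finiteness, not mere point-countability, is needed. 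Two points you leave implicit, both harmless under the paper's standing Hausdorff assumption but worth flagging: (i) your claim that the trace of $A$ on a $P$ with $x \notin P$ is closed rests on the fact that the value set $A$ of a sequence converging to $x$ satisfies $\overline{A} \subseteq A \cup \{x\}$, which requires Hausdorffness (it fails, for instance, in the cofinite topology, where a sequence may accumulate everywhere); (ii) ``$A$ is finite'' must be upgraded from finitely many \emph{values} to finitely many \emph{indices} before you may ``discard finitely many terms,'' which again uses uniqueness of sequential limits: a value attained at infinitely many indices would have to equal $x$, yet $x \notin A$. With those two remarks supplied, your argument is a complete and correct proof of the cited lemma.
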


The subsequent proposition serves as the principal technical outcome of this section.

\begin{proposition}\label{pro4.5}
Let \( H \) be a closed neutral strongly subgyrogroup of a strongly topological gyrogroup \( G \).
Suppose that $G/H$ is determined
by a point-countable cover \(\mathcal{P}\) consisting of bisequential spaces.
Suppose that $G/H$ contains neither a closed copy of $S_{\omega}$ nor a closed copy of $S_2$.
Then $G/H$ is metrizable.
\end{proposition}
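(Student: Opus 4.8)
The plan is to show that $G/H$ is bisequential and then invoke the preceding theorem asserting that metrizability of $G/H$ is equivalent to bisequentiality; that theorem applies verbatim here, since $H$ is a closed neutral strong subgyrogroup of the strongly topological gyrogroup $G$. The whole argument rests on converting the point-countable cover into a genuinely ``locally finite'' situation via Lemma \ref{lem4.2}, and then exploiting two soft facts about bisequential spaces together with the homogeneity of $G/H$.

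First I would record the two structural facts about bisequentiality that drive the proof. (i) Bisequentiality is hereditary: the trace to a subspace of a countable synchronous prefilter is again such a prefilter. (ii) Bisequentiality is preserved by finite unions, that is, if $X = A_1 \cup \cdots \cup A_n$ with each $A_i$ bisequential as a subspace, then $X$ is bisequential. The key observation for (ii) is that for a prefilter $\zeta$ accumulating at $x$ one has $x \in \overline{Z} = \bigcup_i \overline{Z \cap A_i}$ for each $Z \in \zeta$, so a pigeonhole/cofinality argument produces an index $i_0$ with the property that for every $Z \in \zeta$ there is $Z' \subseteq Z$ in $\zeta$ with $x \in \overline{Z' \cap A_{i_0}}$. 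The traces $\{Z \cap A_{i_0}\}$ then form a prefilter on $A_{i_0}$ accumulating at $x$; bisequentiality of $A_{i_0}$ yields a countable prefilter whose members lie inside $A_{i_0}$, converging to $x$ and synchronous with $\zeta$. Since these members lie in $A_{i_0}$, convergence in the subspace coincides with convergence in $X$, and synchronicity with $\{Z \cap A_{i_0}\}$ upgrades to synchronicity with $\zeta$. By the same circle of ideas, if every point of a space has a bisequential open neighborhood, the space is bisequential.

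With these in hand I would argue as follows. Each member of $\mathcal P$ is bisequential, hence Fr\'{e}chet--Urysohn, and by (ii) every finite union of members of $\mathcal P$ is bisequential, so in particular Fr\'{e}chet--Urysohn. This is exactly the hypothesis needed to apply Lemma \ref{lem4.2} to the cover $\mathcal P$ of $G/H$: as $G/H$ contains no closed copy of $S_\omega$ or $S_2$, every point of $G/H$ has an open neighborhood $O$ covered by a finite subfamily $\mathcal E \subseteq \mathcal P$. Then $O$ is a subspace of the finite union $\bigcup \mathcal E$, which is bisequential, so by (i) $O$ is a bisequential open neighborhood. Because $G/H$ is homogeneous, every point of $G/H$ then has a bisequential open neighborhood, whence $G/H$ is bisequential, and the metrizable-iff-bisequential theorem finishes the proof.

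The main obstacle is the verification that finite unions of subspaces from $\mathcal P$ satisfy a countability axiom strong enough to feed Lemma \ref{lem4.2}; this is where the cofinality argument in (ii) must be executed carefully, with attention to the fact that the prefilter members supplied by bisequentiality lie inside the chosen $A_{i_0}$, so that passing between the subspace and $G/H$ leaves both convergence and synchronicity intact. Everything else --- heredity, locality, and the appeals to homogeneity and to the metrizability characterization --- is routine. One may alternatively replace the heredity/locality step by an application of Lemma \ref{lem4.3} to the finite cover $\mathcal E$ of $O$, extracting a nonempty open set densely covered by a single bisequential member, but the homogeneity-plus-locality route above is the cleaner one.
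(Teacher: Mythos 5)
Your overall architecture---prove that $G/H$ is bisequential and then invoke the theorem in Section 3 that metrizability of $G/H$ is equivalent to bisequentiality---is legitimate, and your fact (i) (heredity of bisequentiality) and the locality principle are both correct. The proof breaks at claim (ii): a finite union of bisequential subspaces need \emph{not} be bisequential, nor even Fr\'{e}chet--Urysohn, when the pieces are not closed. Arens' space $S_2$ itself is a counterexample: write $S_2 = A_1 \cup A_2$ with $A_1 = (\mathbb{N}\times\mathbb{N}) \cup \mathbb{N}$ and $A_2 = \{\infty\} \cup \mathbb{N}$. In the subspace topology, $A_1$ is a disjoint topological sum of convergent sequences (each column together with its limit point is clopen in $A_1$) and $A_2$ is a single convergent sequence; both are metrizable, hence bisequential, yet $S_2$ is not Fr\'{e}chet--Urysohn, let alone bisequential. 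The precise point of failure in your pigeonhole argument is the passage to the trace prefilter on $A_{i_0}$: you obtain $x \in \overline{Z' \cap A_{i_0}}$ (closure taken in $X$), but nothing places $x$ \emph{inside} $A_{i_0}$; if $A_{i_0}$ is not closed, the trace prefilter accumulates at no point of the subspace $A_{i_0}$, so its bisequentiality cannot be applied. This same gap already invalidates your appeal to Lemma \ref{lem4.2}, whose hypothesis---that finite unions of members of $\mathcal{P}$ are Fr\'{e}chet--Urysohn (or sequential with $G_\delta$-points)---is exactly what the false claim (ii) was supposed to supply. Note that your fallback suggestion (apply Lemma \ref{lem4.3} to $\mathcal{E}$) is blocked by the same problem, since Lemma \ref{lem4.3} is only reached after Lemma \ref{lem4.2} has been applied.

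The gap is reparable if the members of $\mathcal{P}$ are closed: then $x \in \overline{Z' \cap A_{i_0}} \subseteq A_{i_0}$, your pigeonhole goes through, and in fact both (ii) and the Fr\'{e}chet--Urysohn statement about finite unions become true. This is effectively what the paper does: its proof works with a \emph{closed} point-countable cover (closedness is assumed there, even though it is absent from the statement), and closedness is used only to verify the finite-union hypothesis of Lemma \ref{lem4.2}. After that, the paper never needs bisequentiality of unions at all: Lemma \ref{lem4.3} produces a single member $E$ and a nonempty open $B$ with $B \subseteq \overline{B \cap E}$; the bisequential (hence biradial) subspace $B \cap E$ is $\pi$-nested at a point by Lemma \ref{lem3.3}, $\pi$-nestedness passes to the closure $\overline{B\cap E}$ by \cite[Lemma 20]{Arha3} and hence to the open set $B$ and to $G/H$ itself at that point, so Proposition \ref{prop3.3} makes $G/H$ nested; then Lemma \ref{lem4.4} gives sequentiality, producing a non-open $G_\delta$-set in the non-discrete case, Lemma \ref{lem10} then forces countable character, and Theorem \ref{the1.1s} yields metrizability. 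So your bisequentiality route is viable once closedness is added (and it is arguably more direct than the paper's nestedness route), but as written claim (ii) is false and the proof has a genuine hole.
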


\begin{proof}
Let \( p : G \to G/H \) denote the natural quotient map, and let \( 0^* = p(0) \). We may assume that \( G/H \) is non-discrete.
Suppose \( \mathcal{P} \) is a closed point-countable cover that determines the topology of \( G/H \). By leveraging the closedness of all \( P \in \mathcal{P} \), it is straightforward to verify that \( \bigcup \mathcal{P}' \) is Fr\'{e}chet-Urysohn for every finite subfamily \( \mathcal{P}' \subseteq \mathcal{P} \).
So \( \mathcal{P} \) and \( G/H \) satisfy the conditions of Lemma \ref{lem4.2}. Consequently, there exist a finite subfamily \( \mathcal{E}\subset \mathcal{P} \) and an open subset \( O \) such that \(0^* \in O \subset \bigcup \mathcal{E}\).
By Lemma \ref{lem4.3}, there exist \( E \in \mathcal{E}\) and a non-empty open subset \( B \subset G/H \) satisfying \( B \subset \overline{B \cap E }\). Since \( B \) is non-empty, we can choose \( x \in B \cap E \). As an open subset of the bisequential space \( E \), the space \( B \cap E \) is bisequential in the induced topology. Hence, by Lemma \ref{lem3.3}, \( B \cap E \) is \(\pi\)-nested at \( x \).
According to \cite[Lemma 20]{Arha3}, \( \overline{B \cap E} \) is also \(\pi\)-nested at \( x \). Since \( x \in B \subset \overline{B \cap E} \), it follows that \( B \) is \(\pi\)-nested at \( x \). Finally, because \( B \) is open in \( G/H \), we conclude that \( G/H \) is \(\pi\)-nested at \( x \). By Proposition \ref{prop3.3}, \( G/H \) is nested.

By Lemma \ref{lem4.4}, \( G/H \) is sequential, and thus \( G/H \) contains a non-trivial convergent sequence. Consequently, \( G/H \) must have a \( G_{\delta} \)-subset that is not open. Since \( G/H \) is nested, it is first-countable by Lemma \ref{lem10}. It then follows from Theorem \ref{the1.1s} that \( G/H \) is metrizable.
\end{proof}

\begin{definition}\cite{Boo}
A mapping $f$ of a space $X$ onto a space $Y$ is called a  \textit{sequentially quotient mapping} if
whenever $\{y_n\}_{n \in \mathbb{N}}$ is a sequence converging to a point $y \in Y$ there are a convergent sequence $\{x_i\}_{i \in \mathbb{N}}$ in $X$
and a subsequence $\{y_{n_i}\}_{i \in \mathbb{N}}$ of $\{y_n\}_{n \in \mathbb{N}}$ with each $x_i \in f^{-1}(y_{n_i})$.
\end{definition}

\begin{proposition}\label{pro4.7}
Let $H$ be a closed neutral strong subgyrogroup of a strongly topological gyrogroup $(G, \tau, \oplus)$.
If the natural quotient mapping
$p$ of $G$ onto $G/H$ is a sequential quotient mapping, then the following conditions are equivalent.
\begin{enumerate}
    \item $G/H$ contains a closed copy of $S_2$.
    \item $G/H$ contains a closed copy of $S_\omega$.
\end{enumerate}
\end{proposition}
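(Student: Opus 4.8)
The plan is to prove both implications by a single scheme. Using the homogeneity of $G/H$, in any copy of $S_\omega$ or $S_2$ I may assume its distinguished point (the apex $\infty$) is $0^{*}=p(0)$, so that all the relevant sequences converge to $0^{*}$. Since $p$ is sequentially quotient, I can lift each convergent sequence of the copy to a convergent sequence in $G$, recombine the lifts inside $G$ using $\oplus$ and $\ominus$, and push the outcome back down by $p$. The only algebraic inputs I need are Lemma \ref{lem3.8} (which transfers convergence to $0^{*}$ through $\ominus$ and $\oplus$), the identity $\pi^{-1}(\pi(a))=a\oplus H$, and the left cancellation law (Theorem \ref{the1.3}(5)). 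In each direction the decisive step is a contradiction with the defining non-convergence property of the source space: in $S_\omega$ every row converges to the apex but no transversal (a sequence meeting infinitely many rows) does, while in $S_2$ no sequence of second-level points converges to the apex at all.

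For $(1)\Rightarrow(2)$ I would start from a closed copy of $S_2$ with apex $0^{*}$, first-level sequence $a_n\to0^{*}$, and second-level sequences $x_{n,m}\to a_n$. Lifting each second-level row, I obtain $\tilde x_{n,m}\in p^{-1}(x_{n,m})$ converging (after passing to a subsequence and reindexing) to some $\tilde a_n\in p^{-1}(a_n)$, and I set $w_{n,m}=\ominus\tilde a_n\oplus\tilde x_{n,m}$. Continuity of $\oplus$ gives $w_{n,m}\to0$ for each fixed $n$, hence every row $\{p(w_{n,m})\}_m$ converges to $0^{*}$ and I get a fan. To see it is a genuine $S_\omega$, I must rule out a transversal converging to $0^{*}$: if $p(w_{n_j,k_j})\to0^{*}$ with $n_j\to\infty$, then $p(\tilde a_{n_j})=a_{n_j}\to0^{*}$, so Lemma \ref{lem3.8} applied to $\{\tilde a_{n_j}\}$ and $\{w_{n_j,k_j}\}$ yields $p(\tilde a_{n_j}\oplus w_{n_j,k_j})\to0^{*}$; since $\tilde a_{n_j}\oplus w_{n_j,k_j}=\tilde x_{n_j,k_j}$ by left cancellation, this forces $x_{n_j,k_j}\to0^{*}$, a sequence of second-level points of $S_2$ converging to its apex, which is impossible.

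For $(2)\Rightarrow(1)$ I would start from a closed copy of $S_\omega$ with apex $0^{*}$ and rows $L_n=\{x_{n,m}\}_m\to0^{*}$, and lift each row to $\tilde x_{n,m}\in p^{-1}(x_{n,m})$ converging to some $g_n\in H=p^{-1}(0^{*})$. Using row $0$ for the first level I set $a_n:=x_{0,n}=p(\tilde x_{0,n})\to0^{*}$, and I define second-level points $y_{n,k}=p(\tilde x_{0,n}\oplus\tilde x_{n,k})$; because $g_n\in H$ and $p^{-1}(a_n)=\tilde x_{0,n}\oplus H$, continuity gives $y_{n,k}\to a_n$ for each fixed $n$. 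The Arens property is the crux: if a sequence of second-level points $y_{n_j,k_j}$ converged to $0^{*}$ with $n_j\to\infty$, then with $z_j=\tilde x_{0,n_j}\oplus\tilde x_{n_j,k_j}$ we have $p(z_j)\to0^{*}$ and $p(\tilde x_{0,n_j})=a_{n_j}\to0^{*}$, so Lemma \ref{lem3.8} gives first $p(\ominus\tilde x_{0,n_j})\to0^{*}$ and then $p(\ominus\tilde x_{0,n_j}\oplus z_j)\to0^{*}$; left cancellation gives $\ominus\tilde x_{0,n_j}\oplus z_j=\tilde x_{n_j,k_j}$, so $x_{n_j,k_j}\to0^{*}$ would be a transversal of $S_\omega$ converging to its apex, a contradiction.

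In both directions the case where the row index stays bounded must be handled separately: such sequences are confined to finitely many rows and can cluster only at the corresponding $a_n\neq0^{*}$, so they never converge to $0^{*}$. I expect the genuinely technical part, and the main obstacle, to be verifying that each constructed set is a \emph{closed} copy carrying \emph{exactly} the fan (resp. Arens) topology: one must arrange distinctness and isolation of the constructed points (achievable by passing to subsequences and using that $G/H$ is Hausdorff and $T_1$ homogeneous) and then check closedness in $G/H$. The algebraic heart, namely the transfer of convergence, is already fully controlled by Lemma \ref{lem3.8}, the coset identity $\pi^{-1}(\pi(a))=a\oplus H$, and the left cancellation law, so the remaining work is this point-set bookkeeping around closedness and faithfulness of the embeddings.
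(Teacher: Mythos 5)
Your skeleton matches the paper's: both arguments build the new space by forming differences of lifted sequences (you use left differences $\ominus\tilde a_n\oplus\tilde x_{n,m}$ undone by left cancellation; the paper uses right differences $a_n(m)\ominus a_n$ undone by the coaddition identity of Proposition \ref{the3.22s}), and both rule out the forbidden convergent sequences (transversals of $S_\omega$, second-level sequences of $S_2$) by transferring convergence through $p$ and cancelling. That part of your proposal is sound and is essentially the paper's construction.

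The genuine gap is that you dismiss the verification that the constructed sets are \emph{closed} copies carrying the correct topology as ``point-set bookkeeping'' achievable from Hausdorffness, $T_1$-ness, homogeneity, and passing to subsequences. It is not: this verification is the bulk of the paper's proof (Claims 1--3), and it is exactly where the hypotheses that $G$ is \emph{strongly} topological and $H$ is \emph{neutral} enter at full strength. The reason your toolkit cannot deliver it is that $G/H$ is not assumed (and need not be) sequential, so a point of $\overline{B}\setminus B$ cannot be detected by a convergent sequence; Lemma \ref{lem3.8}, being a purely sequence-level statement, is therefore useless for proving $B$ closed. The paper instead argues at the level of neighborhoods: given $x\in\overline{B}\setminus B$, it chooses a neighborhood $p(g_x\oplus V)$ meeting the original closed copy in a controlled way, then uses a gyr-invariant $U$ with $U\oplus U\subseteq V$, neutrality in the form $H\oplus W\subseteq U\oplus H$, and Proposition \ref{prop2.10s} to run the computation (*), showing that infinitely many of the original points $a_{n_i}(m_i)$ land in $(g_x\oplus V)\oplus H$ and contradicting closedness of the original copy. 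The same computation is needed again to show that the sets $\bigcup\{y_n(m): m\le f(n)\}$ are closed, which is what makes the subspace topology the fan (resp.\ Arens) topology rather than merely reproducing the incidence structure. None of this follows from the three ``algebraic inputs'' you list. Two smaller omissions of the same kind: in (1)$\Rightarrow$(2) you must make the rows essentially pairwise disjoint (the paper's Claim 1, itself proved with the coaddition identity and a convergence transfer), and in (2)$\Rightarrow$(1) the paper separates the branches by disjoint closed neighborhoods using complete regularity of $G/H$ (Corollary \ref{cor2.17}), which Hausdorffness alone does not provide.
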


\begin{proof}
Let \( 0^* = p(0) \), where $0$ is the identity in $G$.
For each \( x \in G/H \), choose \( g_x \in G \) such that \( x = p(g_x) \).

(1)$\Rightarrow$(2). Since \( G \) and \( G/H \) are homogeneous and \( p \) is a sequential quotient map, we can assume without loss of generality that
\( A = \{0^*\} \cup \{p(a_n) : n \in \mathbb{N}\} \cup \{p(a_n(m)) : m, n \in \mathbb{N}\} \)
is a closed copy of \( S_2 \), where \( a_n \to 0\) as \( n \to \infty \) and \( a_n(m) \to a_n \) as \( m \to \infty \).
For each \( m, n \in \mathbb{N} \), define \( y_n(m) = p(a_n(m) \ominus a_n) \). Then, for each \( n \in \mathbb{N} \), we have \( y_n(m) \to 0^* \) as \( m \to \infty \). For every \( n \in \mathbb{N} \), let \( A_n = \{y_n(m) : m \in \mathbb{N}\} \).

\textbf{Claim 1.}
For each $m \in \mathbb{N}$, the set $F = \{n : A_m \cap A_n \text{ is infinite}\}$ is finite.

Otherwise, there exists \( m \in \mathbb{N} \) such that the set \( F = \{n : A_m \cap A_n \text{ is infinite}\} \) is infinite. Choose distinct elements \( p(a_{n_i}(m_i) \ominus a_{n_i}) \in A_m \cap A_{n_i} \) for \( n_i \in F \) with \( n_i < n_{i+1} \). Since \( p(a_{n_i}(m_i) \ominus a_{n_i}) \in A_m \) for each \( i \in \mathbb{N} \), it follows that \( p(a_{n_i}(m_i) \ominus a_{n_i}) \to 0^* \) as \( i \to \infty \).
Given that \( a_n \to 0 \) and \( a_{n_i}(m_i) \ominus a_{n_i} \to 0 \) as \( i \to \infty \), we have
\( p(a_{n_i} \oplus ((\ominus a_{n_i} \oplus (a_{n_i}(m_i) \ominus a_{n_i})) \oplus a_{n_i})) \to 0^* \)
as \( i \to \infty \).
By Proposition \ref{the3.22s}, we can get
$(a_{n_i}(m_i) \ominus a_{n_i}) \boxplus a_{n_i}=a_{n_i} \oplus ((\ominus a_{n_i} \oplus (a_{n_i}(m_i) \ominus a_{n_i})) \oplus a_{n_i})$.
So
\( p(a_{n_i} \oplus ((\ominus a_{n_i} \oplus (a_{n_i}(m_i) \ominus a_{n_i})) \oplus a_{n_i})) = p((a_{n_i}(m_i) \ominus a_{n_i}) \boxplus a_{n_i}) = p(a_{n_i}(m_i)), \)
it follows that \( p(a_{n_i}(m_i)) \to 0^* \) as \( i \to \infty \), which is a contradiction.

By Claim 1, without loss of generality, we can assume that \( A_i \cap A_j = \emptyset \) for distinct \( i, j \in \mathbb{N} \). Let
\( B = \{0^*\} \cup \{y_n(m) : n, m \in \mathbb{N}\}. \)

\textbf{Claim 2.}
The subspace $B$ of $G/H$ is a closed copy of $S_\omega$.

First, the set \( B \) is closed in \( G/H \). If not, there exists a point \( x \in (G/H) \setminus B \) such that \( x \in \overline{B} \).
Since \( A \) is closed, there exists a neighborhood $V$ of $0$ such that \( p(g_x \oplus V) \) intersects \(\{p(a_n(m)) : m, n \in \mathbb{N}\}\) for at most one \( n \). Since $G$ is a strongly topological gyrogroup, there is a gyr-invariant neighborhood $U$ of $0$ such that \( U \oplus U \subseteq V \).
Then, \( p(g_x \oplus U) \) contains an infinite subset \(\{y_{n_i}(m_i) : i \in \mathbb{N}\}\) of \( B \).
Since \( H \) is a neutral strong subgyrogroup of \( G \), there exists a gyr-invariant neighborhood $W$ of $0$
such that \( H \oplus W \subseteq U \oplus H \). Given that \( a_n \to 0 \) as \( n \to \infty \), we can assume that \(\{a_{n_i} : i \in \mathbb{N}\} \subseteq W \). Direct computation shows that
\[
\begin{split}
\{a_{n_i}(m_i) : i \in \mathbb{N}\} & =\{(a_{n_i}(m_i)\ominus a_{n_i})\boxplus a_{n_i}: i \in \mathbb{N}\} \\
&\subseteq ((g_x\oplus U)\oplus H)\boxplus W \\
& = \{ ((g_x \oplus u_1) \oplus h)\oplus \text{gyr}[(g_x \oplus u_1)\oplus h, \ominus u_2](u_2): u_1\in U, u_2 \in W, h \in H \} \\
& \subseteq ((g_x\oplus U)\oplus H)\oplus W \\
&= (g_x\oplus U)\oplus (H\oplus W)\quad\quad\quad\quad\quad \text{by Proposition \ref{prop2.10s} ~}\\
&= g_x\oplus (U\oplus(H\oplus W))\quad\quad\quad\quad\quad \text{by Proposition \ref{prop2.10s} ~}\\
&\subseteq g_x\oplus (U\oplus (U\oplus H))\quad\quad\quad\quad\quad  \text{~by~}H\oplus W\subseteq U\oplus H\\
&= g_x\oplus ((U \oplus U)\oplus H)\\
&\subseteq g_x\oplus (V\oplus H)\\
&= (g_x\oplus V)\oplus H. \quad \quad \quad\quad \quad \quad  (*)
\end{split}
\]
It follows that $\{p(a_{n_i}(m_i)) : i \in \mathbb{N}\} \subseteq p(g_x\oplus V)$, which is a contradiction.

If \( f: \mathbb{N} \to \mathbb{N} \), then the set \( C = \bigcup \{y_{n}(m) : m \leq f(n), n \in \mathbb{N}\} \) does not have any cluster point. Suppose, to the contrary, that there exists \( x \in \overline{C \setminus \{x\}} \). Since \( A \) is closed, we can choose \( V_1 \in \mathcal{U}\) such that \( \left| p(g_x \oplus V_1) \cap \{a_n(m) : m \leq f(n), n \in \mathbb{N}\} \right| \leq 1 \). Select \( U_1 \in \mathcal{U} \) such that \( U_1 \oplus U_1 \subseteq V_1 \).
Then, \( p(g_x \oplus U_1) \) contains an infinite subset \( \{y_{k_i}(l_i) : i \in \mathbb{N}\} \) of \( C \). Since \( H \) is a neutral strong subgyrogroup of \( G \), there exists \( W_1 \in \mathcal{U} \) such that \( H \oplus W_1 \subseteq U_1 \oplus H \).
Given that \( a_n \to 0 \) as \( n \to \infty \), we can assume that \( \{a_{k_i} : i \in \mathbb{N}\} \subseteq W_1 \). Following a similar calculation as before (*), we can deduce that \( \{a_{k_i}(l_i) : i \in \mathbb{N}\} \subseteq (g_x \oplus V_1) \oplus H \). Consequently, \( \{p(a_{k_i}(l_i)) : i \in \mathbb{N}\} \subseteq p(g_x \oplus V_1) \), which leads to a contradiction.

(2)$\Rightarrow$(1)
Let \( A = \{0^\ast\} \cup \{p(b_n(m)) : m, n \in \mathbb{N}\} \) be a closed copy of \( S_\omega \), where for each \( n \in \mathbb{N} \), \( b_n(m) \to 0 \) as \( m \to \infty \). Clearly, there exists a non-trivial sequence \( \{p(c_n)\}_{n \in \mathbb{N}} \) in \( G/H \) such that \( c_n \to 0 \) as \( n \to \infty \).
By Corollary \ref{cor2.17}, \( G/H \) is completely regular. Therefore, for each \( n \in \mathbb{N} \), let \( U_n \) be an open neighborhood of \( c_n \) such that \( \overline{p(U_i)} \cap \overline{p(U_j)} = \emptyset \) for distinct \( i, j \in \mathbb{N} \).
Define \( x_n = p(c_n) \) and \( x_n(m) = p(c_n \oplus b_n(m)) \) for each \( n, m \in \mathbb{N} \). For each \( n \in \mathbb{N} \), it follows that \( x_n(m) \to x_n \) as \( m \to \infty \). Without loss of generality, we can assume that \( \{x_n(m) : m \in \mathbb{N}\} \subset p(U_n) \).
Set \( B = \{0^\ast\} \cup \{x_n : n \in \mathbb{N}\} \cup \{x_n(m) : n, m \in \mathbb{N}\} \).

\textbf{Claim 3.} The subspace \(B\) of \(G/H\) is a closed copy of \(S_2\).

First, the set \( B \) is closed in \( G/H \). Indeed, if \( B \) were not closed, there would exist a point \( x \in (G/H) \setminus B \) such that \( x \in \overline{B} \).
Since \( A \) is closed, there exists a neighborhood $V$ of $0$ such that \( p(g_x \oplus V) \cap (A \setminus \{x\})= \emptyset \).
Since $G$ is a strongly topological gyrogroup, there is a gyr-invariant neighborhood $U$ of $0$ such that
\( U \oplus U \subseteq V \) and \( \overline{p(g_x \oplus U)} \cap \{x_n : n \in \mathbb{N}\} = \emptyset \). Clearly, for each \( n \in \mathbb{N} \), the set \( p(g_x \oplus U) \cap \{x_n(m) : m \in \mathbb{N}\} \) is finite.
Furthermore, \( p(g_x \oplus U) \) contains infinitely many elements of \( \{x_n(m) : n, m \in \mathbb{N}\} \), which we denote by \( \{x_{n_i}(m_i) : i \in \mathbb{N}\} \).
Since \( H \) is a neutral strong subgyrogroup of \( G \), there exists a gyr-invariant neighborhood $W$ of $0$
such that \( H \oplus W \subseteq U \oplus H \). Given that \( \ominus c_n \to 0\) as \( n \to \infty \), without loss of generality, we can assume that \( \{\ominus c_{n_i} : i \in \mathbb{N}\} \subseteq W \).
By a similar calculation as before, we have \( \{b_{n_i}(m_i) : i \in \mathbb{N}\} \subseteq (g_x \oplus V) \oplus H \). Hence, \( \{p(b_{n_i}(m_i)) : i \in \mathbb{N}\} \subseteq p(g_x \oplus V) \), which implies that \( p(g_x \oplus V) \) contains infinitely many elements of \( A \). This is a contradiction.

If \( f: \mathbb{N} \to \mathbb{N} \), using a similar argument as in Claim 2, we can show that \( \bigcup \{x_n(m) : m \leq f(n), n \geq k \text{ for some } k \in \mathbb{N}\} \) is closed in \( G/H \). Therefore, \( B \) is a closed copy of \( S_2 \).
\end{proof}

In conjunction with Propositions \ref{pro4.5} and \ref{pro4.7}, we obtain the following result.

\begin{corollary}
Let $H$ be a closed neutral strong subgyrogroup of a strongly topological gyrogroup $G$
such that the natural quotient mapping of \( G \) onto \( G/H \) is sequential quotient. Suppose that \( G/H \) is determined by a point-countable cover \( \mathcal{P} \) consisting of bisequential spaces. Assume also that:
\begin{enumerate}
    \item For each finite subfamily \( \mathcal{P}' \subseteq \mathcal{P} \), \( \bigcup\mathcal{P}' \) is Fr\'{e}chet-Urysohn.
    \item \( G/H \) does not contain a closed copy of \( S_\omega \).
\end{enumerate}
Then \( G/H \) is metrizable.
\end{corollary}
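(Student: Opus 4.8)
The plan is to deduce this corollary directly from Propositions \ref{pro4.5} and \ref{pro4.7}, the real content being a short logical bridge. The guiding observation is that once the sequential quotient map $p$ is available, the single exclusion in hypothesis (2) automatically rules out \emph{both} forbidden closed copies, so that the standing assumptions of Proposition \ref{pro4.5} are met. Thus the proof is essentially bookkeeping: combine the equivalence of Proposition \ref{pro4.7} with hypothesis (2), and then feed the result into the machinery behind Proposition \ref{pro4.5}.

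First I would apply Proposition \ref{pro4.7} in its contrapositive form. Since $p \colon G \to G/H$ is a sequential quotient mapping, that proposition gives the equivalence of the statements ``$G/H$ contains a closed copy of $S_2$'' and ``$G/H$ contains a closed copy of $S_\omega$.'' By hypothesis (2) the space $G/H$ contains no closed copy of $S_\omega$; hence it contains no closed copy of $S_2$ either. At this point $G/H$ is a space determined by a point-countable cover $\mathcal{P}$ of bisequential spaces that contains neither a closed copy of $S_\omega$ nor a closed copy of $S_2$.

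Next I would run the argument of Proposition \ref{pro4.5}. The one place where that proof invoked the closedness of the members of $\mathcal{P}$ was to secure that $\bigcup \mathcal{P}'$ is Fr\'{e}chet--Urysohn for every finite $\mathcal{P}' \subseteq \mathcal{P}$, which is precisely the input demanded by Lemma \ref{lem4.2}. Here hypothesis (1) supplies this property directly, so Lemma \ref{lem4.2} applies and yields a finite $\mathcal{E} \subseteq \mathcal{P}$ together with an open set $O$ satisfying $0^* \in O \subseteq \bigcup \mathcal{E}$. From there the same chain of reductions applies: Lemma \ref{lem4.3} produces an $E \in \mathcal{E}$ and an open $B$ with $B \subseteq \overline{B \cap E}$; the open piece $B \cap E$ is bisequential, hence $\pi$-nested at a point by Lemma \ref{lem3.3}, and this $\pi$-nestedness transfers to $\overline{B\cap E}$, to $B$, and to $G/H$ by \cite[Lemma 20]{Arha3}; Proposition \ref{prop3.3} then shows $G/H$ is nested. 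Finally Lemma \ref{lem4.4} gives that $G/H$ is sequential, so it carries a non-trivial convergent sequence and therefore a non-open $G_\delta$-subset; Lemma \ref{lem10} upgrades nestedness to first-countability, and Theorem \ref{the1.1s} delivers metrizability.

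The main obstacle is exactly the verification that hypothesis (1) is a legitimate substitute for the closedness of $\mathcal{P}$ that was used inside the proof of Proposition \ref{pro4.5}. Concretely, one must confirm that the full hypothesis of Lemma \ref{lem4.2} still holds: bisequential spaces are Fr\'{e}chet--Urysohn, so each $P \in \mathcal{P}$ is in particular a sequential space in which every point is a $G_\delta$-set, while hypothesis (1) takes care of the finite-union condition. Once this is checked, every subsequent step of the proof of Proposition \ref{pro4.5} goes through unchanged, and $G/H$ is metrizable.
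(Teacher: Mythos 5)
Your proof is correct and is essentially the paper's own argument: the paper derives this corollary in one line by combining Proposition \ref{pro4.7} (sequential quotient plus hypothesis (2) rules out closed copies of $S_2$ as well) with Proposition \ref{pro4.5}. Your extra step of checking that hypothesis (1) legitimately replaces the closedness of $\mathcal{P}$ invoked inside the proof of Proposition \ref{pro4.5} (whose statement omits closedness but whose proof uses it to get Fr\'{e}chet--Urysohn finite unions for Lemma \ref{lem4.2}) is exactly the right point to verify and is glossed over in the paper.
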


As is well known, every strongly Fr\'{e}chet-Urysohn space does not contain a closed copy of \( S_\omega \) or a closed copy of \( S_2 \).
By Proposition \ref{pro4.5},  we arrive at the following result.

\begin{proposition}
Let $H$ be a closed neutral strong subgyrogroup of a strongly topological gyrogroup $G$ such that \( G/H \) is determined by a point-countable cover \( \mathcal{P} \) consisting of closed bisequential spaces. Then the following conditions are
equivalent:
\begin{enumerate}
    \item \( G/H \) contains neither a closed copy of \( S_\omega \) nor a closed copy of \( S_2 \).
    \item \( G/H \) is metrizable.
\end{enumerate}
\end{proposition}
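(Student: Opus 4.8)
The plan is to prove the two implications separately, with the substantive direction being an essentially immediate application of Proposition \ref{pro4.5}.

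For the direction $(2)\Rightarrow(1)$, I would argue that a metrizable space cannot contain a closed copy of either $S_\omega$ or $S_2$. The cleanest route is to recall that every metrizable space is first-countable, hence strongly Fr\'{e}chet-Urysohn; by the well-known fact recorded just above the proposition, a strongly Fr\'{e}chet-Urysohn space contains no closed copy of $S_\omega$ and no closed copy of $S_2$. Since metrizability is hereditary, this already rules out such subspaces. Equivalently, one may observe directly that neither $S_\omega$ nor $S_2$ is first-countable at its distinguished point, so neither can embed as a subspace of a metrizable space.

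For the direction $(1)\Rightarrow(2)$, I would verify that the hypotheses of Proposition \ref{pro4.5} are met and then invoke it. The cover $\mathcal{P}$ is point-countable and determines $G/H$, and each member of $\mathcal{P}$ is a closed bisequential space; in particular each member is bisequential, which is exactly what Proposition \ref{pro4.5} requires of the cover. The assumption $(1)$ supplies the remaining hypothesis, namely that $G/H$ contains neither a closed copy of $S_\omega$ nor a closed copy of $S_2$. Proposition \ref{pro4.5} then yields directly that $G/H$ is metrizable.

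I expect no serious obstacle here, since the proposition is essentially a repackaging of Proposition \ref{pro4.5} together with a standard fact about strongly Fr\'{e}chet-Urysohn spaces. The only point that deserves care is the role of the \emph{closedness} of the members of $\mathcal{P}$: it is precisely this closedness that guarantees, for every finite subfamily $\mathcal{P}'\subseteq\mathcal{P}$, that $\bigcup\mathcal{P}'$ is Fr\'{e}chet-Urysohn, which is the structural input used inside the proof of Proposition \ref{pro4.5} via Lemma \ref{lem4.2}. Once this is noted, both implications close at once.
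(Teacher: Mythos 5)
Your proposal is correct and follows essentially the same route as the paper: the implication $(2)\Rightarrow(1)$ via the standard fact that metrizable (hence strongly Fr\'{e}chet-Urysohn) spaces contain no closed copy of $S_\omega$ or $S_2$, and $(1)\Rightarrow(2)$ by checking that the closedness of the members of $\mathcal{P}$ makes finite unions Fr\'{e}chet-Urysohn and then invoking Proposition \ref{pro4.5}. Your closing remark about why the closedness hypothesis matters is precisely the point the paper's own proof makes.
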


\begin{proof}
We only need to establish (1) \(\Rightarrow\) (2). Suppose \( \mathcal{P} \) is a closed point-countable cover that determines the topology of \( G/H \). By leveraging the closedness of all \( P \in \mathcal{P} \), it is straightforward to verify that \( \bigcup \mathcal{P}' \) is Fr\'{e}chet-Urysohn for every finite subfamily \( \mathcal{P}' \subseteq \mathcal{P} \). Consequently, Proposition \ref{pro4.5} ensures the metrizability of \( G/H \).
\end{proof}

\begin{theorem}\cite[Corollary 3.18]{Ba2}\label{the4.13}
Suppose that \( G \) is a strongly topological gyrogroup, \( H \) is a closed strong subgyrogroup of \( G \), and \( H \) is inner neutral, then the following are equivalent:
\begin{enumerate}
    \item \( G/H \) is first-countable;
    \item \( G/H \) is a bisequential space;
    \item \( G/H \) is a weakly first-countable space;
    \item \( G/H \) is a csf-countable and sequential \(\alpha_7\)-space.
\end{enumerate}
\end{theorem}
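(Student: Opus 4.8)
The plan is to prove the four conditions equivalent through a single uniform scheme, with the trivial-looking forward implications \((1)\Rightarrow(2)\), \((1)\Rightarrow(3)\), \((3)\Rightarrow(4)\) handled by soft facts about weak countability axioms, and the two substantive reverse implications \((2)\Rightarrow(1)\) and \((4)\Rightarrow(1)\) both funnelled into the chain: \emph{show \(G/H\) is \(\pi\)-nested at a point, promote this to nestedness by Proposition \ref{prop3.3}, deduce that \(G/H\) is a \(P_\tau\)-space with \(\tau=\chi(G/H)\) by Lemma \ref{lem10}, and then use sequentiality to force \(\tau=\omega\)}. The last step is clean: assuming \(G/H\) non-discrete, sequentiality yields a non-trivial convergent sequence \(x_n\to x\); since \(G/H\) is \(T_1\), the set \((G/H)\setminus\{x_n:n\in\mathbb N\}\) is a non-open \(G_\delta\), so not all \(G_\delta\)-sets are open, whence the \(P_\tau\)-property forces \(\tau=\omega\), i.e. \(G/H\) is first-countable. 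The one point I would check carefully is that Proposition \ref{prop3.3} (and hence Theorem \ref{the3.7}, Lemma \ref{lem10} as applied here) survives under the weaker hypothesis that \(H\) is merely \emph{inner} neutral: inspecting its proof, the only use of neutrality is the inequality \(H\oplus(V\oplus V)\subseteq U\oplus H\), which is exactly inner neutrality, so the argument goes through. For the soft implications I would note: \((1)\Rightarrow(2)\), a decreasing countable base at a point is a chain synchronous with any accumulating prefilter; \((1)\Rightarrow(3)\), an open decreasing base is a weak base; \((3)\Rightarrow(4)\), a weakly first-countable space is sequential and \(snf\)-countable (hence \(csf\)-countable), and diagonalising a sheaf against the weak base (whose members are sequential neighbourhoods) gives a sequence converging to the vertex meeting infinitely many members, so \(G/H\) is an \(\alpha_4\)-space, a fortiori an \(\alpha_7\)-space.

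For \((2)\Rightarrow(1)\) I would first observe that bisequentiality implies biradiality: given a prefilter \(\zeta\) converging (hence accumulating) to \(x\), bisequentiality produces a countable prefilter \(\eta\) converging to \(x\) and synchronous with \(\zeta\); choosing for each \(n\) an element \(Q_n\in\eta\) contained in the intersection of the first \(n\) members yields a decreasing chain inside \(\eta\), still converging to \(x\) and still synchronous with \(\zeta\) (each \(Q_n\in\eta\)), which witnesses biradiality. By Lemma \ref{lem3.3} a biradial space is \(\pi\)-nested at every point, and bisequentiality also gives strong Fréchet–Urysoundness, hence sequentiality. Feeding this into the uniform scheme closes \((2)\Rightarrow(1)\).

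For \((4)\Rightarrow(1)\) the space \(G/H\) is sequential by hypothesis, and I would exclude both sequential fans: since \(S_\omega\) is not an \(\alpha_7\)-space and the \(\alpha_7\) property is inherited by closed subspaces, \(G/H\) contains no closed copy of \(S_\omega\); and since the open continuous surjection \(p\colon G\to G/H\) (Proposition \ref{pro2.28}) is sequentially quotient onto the sequential space \(G/H\), the \(S_2\Rightarrow S_\omega\) direction of Proposition \ref{pro4.7}—whose computation \((*)\) uses only the inner-neutral inequality \(H\oplus W\subseteq U\oplus H\) together with Proposition \ref{prop2.10s} and gyr-invariance—shows that a closed copy of \(S_2\) would produce a closed copy of \(S_\omega\), which is impossible; hence \(G/H\) contains no closed \(S_2\) either. \emph{The hard part will be} the remaining step: to construct, from the countable \(cs\)-network at \(0^*=p(0)\) supplied by \(csf\)-countability, a nest converging to \(0^*\), i.e. to verify \(\pi\)-nestedness at \(0^*\). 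This is where the point-countable cover and Lemmas \ref{lem4.2}--\ref{lem4.3} did the work in Proposition \ref{pro4.5}; here I would instead use sequentiality together with the absence of closed copies of \(S_\omega\) and \(S_2\) to thicken each \(cs\)-network member to a sequential neighbourhood and to organise these into a descending chain of open sets converging to \(0^*\), translating and sliding \(H\) across on the left via \(H\oplus W\subseteq U\oplus H\) and controlling the coset thickenings with Proposition \ref{prop2.10s} and Lemma \ref{lem3.2}, exactly in the spirit of Lemma \ref{lem3.8} and the estimate \((*)\). Once \(\pi\)-nestedness at \(0^*\) is in hand, the uniform scheme (Proposition \ref{prop3.3}, Lemma \ref{lem10}, sequential dichotomy) delivers first-countability and closes the equivalence.
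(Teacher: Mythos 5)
First, a point of order: the paper does not prove Theorem \ref{the4.13} at all --- it is imported verbatim from \cite[Corollary 3.18]{Ba2} and used as a black box. So your proposal can only be judged on its own merits, not against an in-paper argument. On those merits, the soft implications $(1)\Rightarrow(2)$, $(1)\Rightarrow(3)$, $(3)\Rightarrow(4)$ and the implication $(2)\Rightarrow(1)$ are essentially sound: your reduction of bisequentiality to biradiality (recursively extracting a decreasing chain inside the countable synchronous prefilter) is correct, and your check that Proposition \ref{prop3.3} consumes neutrality only through the inner-neutral inequality $H\oplus(V\oplus V)\subseteq U\oplus H$ (Lemma \ref{lem3.2} needing no neutrality) is accurate and is exactly the kind of verification this statement requires, since it hypothesizes inner neutrality only.

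The genuine gap is in $(4)\Rightarrow(1)$, and it is twofold. First, you claim the quotient map $p\colon G\to G/H$ is sequentially quotient \emph{because} it is an open continuous surjection onto a sequential space. That implication is false in general: openness gives no mechanism for lifting convergent sequences. For example, let $Y=\{0\}\cup\{1/n:n\in\mathbb{N}\}$ and let $X=\bigl(\{1/n:n\in\mathbb{N}\}\times\omega_1\bigr)\cup\{(0,\omega_1)\}$ be a subspace of $Y\times(\omega_1+1)$; the projection to $Y$ is open, continuous and onto a metrizable space, yet no subsequence of $(1/n)_n$ lifts to a convergent sequence in $X$, since a lift converging to $(0,\omega_1)$ would require its second coordinates to be cofinal in $\omega_1$, which a countable set of countable ordinals cannot be. This is precisely why Proposition \ref{pro4.7} carries ``sequentially quotient'' as an explicit hypothesis; you cannot invoke its $S_2\Rightarrow S_\omega$ direction for free, so your exclusion of closed copies of $S_2$ is unjustified (your exclusion of $S_\omega$ via the closed-hereditariness of $\alpha_7$ is fine). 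Second, and more fundamentally, the step you yourself flag as ``the hard part'' --- producing $\pi$-nestedness at $0^*$, i.e.\ a chain of \emph{open} sets converging to $0^*$, out of a countable $cs$-network --- is never carried out: $cs$-network members need not be open, ``thickening to sequential neighbourhoods'' does not make them open, and nothing is offered to replace the point-countable-cover machinery (Lemmas \ref{lem4.2} and \ref{lem4.3}) that did this work in Proposition \ref{pro4.5}. Since $(4)\Rightarrow(1)$ is the substantive content of the theorem beyond $(2)\Rightarrow(1)$, the proposal does not prove the statement. A more promising route is to upgrade $\alpha_7$ to $\alpha_4$ (using sequentiality, the absence of closed copies of $S_\omega$, and the algebraic structure of the quotient, in the spirit of Lemma \ref{lem3.8}), then apply Fact \ref{fact2.6}(2),(3) to get weak first-countability from $csf$-countability, and finish with Theorem \ref{the5.3}.
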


Combining with \cite[Lemmas 3.1 and 3.2]{LL} and Theorem \ref{the4.13}, we obtain the following result.

\begin{corollary}
Let $H$ be a closed neutral strong subgyrogroup of a strongly topological gyrogroup $G$.
Then \( G/H \) is metrizable if and only if \( G/H \) has no closed copy of \( S_\omega \) and \( G/H \) is determined by a point-countable cover consisting of first-countable spaces.
\end{corollary}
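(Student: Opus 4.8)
The plan is to establish the non-trivial (sufficiency) implication by verifying that \( G/H \) satisfies one of the equivalent conditions of Theorem \ref{the4.13}, thereby obtaining first-countability, and then to invoke Theorem \ref{the1.1s} to upgrade first-countability to metrizability. The necessity is immediate: a metrizable space is first-countable, hence it is determined by the one-element point-countable cover \( \{G/H\} \) consisting of a first-countable space, while it can contain no closed copy of \( S_\omega \) because every closed subspace of a metrizable space is metrizable whereas \( S_\omega \) is not first-countable at its vertex.

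For the sufficiency, suppose \( G/H \) is determined by a point-countable cover \( \mathcal{P} \) of first-countable spaces and contains no closed copy of \( S_\omega \). First I would observe that every first-countable space is sequential, so \( \mathcal{P} \) consists of sequential spaces and Lemma \ref{lem4.4} yields that \( G/H \) is sequential. Next, since each member of \( \mathcal{P} \) is first-countable it admits a countable cs-network at each of its points; assembling these along the point-countable cover produces a countable cs-network at each point of \( G/H \), so that \( G/H \) is csf-countable. This is precisely the content I would extract from \cite[Lemma 3.1]{LL}.

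The decisive step is to exploit the hypothesis that \( G/H \) omits a closed copy of \( S_\omega \) in order to strengthen this local structure. Here I would appeal to \cite[Lemma 3.2]{LL}: in a sequential csf-countable space the sole obstruction preventing the countable cs-network at a point from being refined to a sequential-neighbourhood network is the presence of a closed sequential fan, so the absence of a closed copy of \( S_\omega \) forces \( G/H \) to be snf-countable and, in particular, a sequential \( \alpha_7 \)-space. Since \( H \) is neutral it is in particular inner neutral, so Theorem \ref{the4.13} applies; combining csf-countability, sequentiality, and the \( \alpha_7 \)-property verifies condition (4) there, whence \( G/H \) is first-countable. Theorem \ref{the1.1s} then promotes first-countability to metrizability.

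I expect the main obstacle to be this third step, namely pinning down how the no-closed-\( S_\omega \) hypothesis interacts with csf-countability to deliver the \( \alpha_7 \)-property. A tempting shortcut would be to note that first-countable spaces are bisequential and to apply Proposition \ref{pro4.5} directly, but that proposition also requires the exclusion of a closed copy of \( S_2 \), which is not among our hypotheses; circumventing this extra requirement is exactly the role played by the no-closed-\( S_\omega \) condition together with \cite[Lemmas 3.1 and 3.2]{LL}. Once the requisite local countability is secured, the homogeneity of \( G/H \) and the characterisation in Theorem \ref{the4.13} render the remainder routine.
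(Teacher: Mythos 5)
Your proposal is correct and follows essentially the same route as the paper, whose entire proof is the remark that the corollary follows by combining \cite[Lemmas 3.1 and 3.2]{LL} with Theorem \ref{the4.13}; you have simply made explicit the intermediate steps (sequentiality via Lemma \ref{lem4.4}, csf-countability, the passage from ``no closed copy of $S_\omega$'' to the snf-countable/$\alpha_7$ condition, and the final upgrade to metrizability via Theorem \ref{the1.1s}). Your observation about why Proposition \ref{pro4.5} cannot be invoked directly (it would additionally require excluding a closed copy of $S_2$) is also consistent with the paper's choice of argument.
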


\section{Quotient spaces of strongly topological gyrogroups with Certain Local Countable Networks}

Recently, Ling et al. \cite{Ling} explored weak countability axioms in the context of coset spaces. In this section, we further investigate related results, particularly proving that if \( H \) is a closed neutral strong subgyrogroup of a strongly topological gyrogroup $G$, then

\begin{enumerate}
    \item \( G/H \) is metrizable \(\Leftrightarrow\) \( G/H \) is an \(snf\)-countable \( k \)-space with countable pseudocharacter;
    \item Suppose \( G/H \) has Baire property, if \( G/H \) has countable \(cn\)-character, then \( G/H \) is metrizable.
\end{enumerate}

\begin{lemma}\label{lem5.3}
Let $H$ be a closed neutral strong subgyrogroup of a strongly paratopological gyrogroup $G$. Suppose that
the sequences \(\{a_n\}_{n \in \mathbb{N}}\) and \(\{b_n\}_{n \in \mathbb{N}}\) satisfy \(p(a_n) \rightarrow p(0)\) and \(p(b_n) \rightarrow p(0)\),
where \(p : G \to G/H\) is the natural quotient mapping, then \(p(a_n \oplus b_n) \rightarrow p(0)\).
\end{lemma}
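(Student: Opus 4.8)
The plan is to imitate, essentially verbatim, the second half of the proof of Lemma \ref{lem3.8}, since that part of the argument establishes convergence of $\{p(a_\delta\oplus b_\delta)\}$ using only the continuity of $\oplus$, the strongly (para)topological structure, the neutrality of $H$, and the algebraic identities of Proposition \ref{prop2.10s}. Crucially, nowhere does the multiplicative part invoke continuity of inversion, so the same reasoning is valid in the merely strongly \emph{paratopological} setting and for sequences in place of nets. So the strategy is simply to specialize the net-indexed computation to the sequence $\{a_n\oplus b_n\}$.

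First I would fix an open neighborhood $O$ of $p(0)$ and pull it back: by continuity of $\oplus$ at $(0,0)$ there is an open neighborhood $U$ of $0$ with $p(U\oplus U)\subseteq O$. Since $H$ is neutral (inner neutral), I would choose an open neighborhood $V$ of $0$ with $H\oplus V\subseteq U\oplus H$. Because $G$ is a strongly paratopological gyrogroup, I would then pick a gyr-invariant open neighborhood $W$ of $0$ with $W\subseteq U\cap V$. Using $p(a_n)\to p(0)$ and $p(b_n)\to p(0)$, there is an $N\in\mathbb{N}$ such that $p(a_n)\in p(W)$ and $p(b_n)\in p(W)$ for all $n\geq N$; since $p^{-1}(p(W))=W\oplus H$, this translates into $a_n,b_n\in W\oplus H$ for $n\geq N$.

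The core step is then the set-theoretic computation showing $a_n\oplus b_n\in(U\oplus U)\oplus H$. Starting from $a_n\oplus b_n\in(W\oplus H)\oplus(W\oplus H)$, I would use the gyr-invariance of $W$ and $H$ together with Proposition \ref{prop2.10s} to rewrite this as $(W\oplus(H\oplus W))\oplus H$, then apply $W\subseteq U$ and $H\oplus W\subseteq H\oplus V\subseteq U\oplus H$ to bound it by $(U\oplus(U\oplus H))\oplus H=((U\oplus U)\oplus H)\oplus H$, and finally collapse $H\oplus H=H$ (subgyrogroup property) via Proposition \ref{prop2.10s} to land in $(U\oplus U)\oplus H$. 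Applying $p$ gives $p(a_n\oplus b_n)\in p((U\oplus U)\oplus H)=p(U\oplus U)\subseteq O$ for all $n\geq N$, which is exactly $p(a_n\oplus b_n)\to p(0)$.

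I do not expect a serious obstacle here, since the argument is parallel to the established Lemma \ref{lem3.8}; the only points needing care are purely bookkeeping. The first is justifying that convergence of $p(a_n)$ into $p(W)$ is equivalent to eventual membership $a_n\in W\oplus H$, which rests on $p^{-1}(p(W))=W\oplus H$ for the $L$-subgyrogroup $H$. The second, and the most delicate, is keeping the reassociations of the inclusion chain honest: each rebracketing must be certified by the gyr-invariance hypothesis feeding Proposition \ref{prop2.10s}, and one must verify that the paratopological weakening (loss of continuous inversion) is never used in this multiplicative half of the computation.
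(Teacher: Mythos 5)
Your proposal is correct and is essentially the paper's own proof: the paper runs exactly this computation, choosing $U$ with $p(U\oplus U)\subseteq O$, $V$ from neutrality with $H\oplus V\subseteq U\oplus H$, a gyr-invariant $W\subseteq U\cap V$, and then the chain $(W\oplus H)\oplus(W\oplus H)=(W\oplus(H\oplus W))\oplus H\subseteq((U\oplus U)\oplus H)\oplus H$ via Proposition \ref{prop2.10s}, concluding $p(a_n\oplus b_n)\in p(U\oplus U)\subseteq O$. Your only (harmless) deviations are that you require gyr-invariance just for $W$ where the paper also takes $U$ and $V$ gyr-invariant, and you make explicit the point the paper glosses over, namely that no continuity of inversion is ever used, so the argument is legitimate in the strongly paratopological setting.
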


\begin{proof}
Let $O$ be an open neighborhood of $p(0)$, where 0
is the identity in $G$.
Since $G$ is a strongly topological gyrogroup, there is a gyr-invariant
neighborhood $U$ of 0 such that $p(U\oplus U) \subseteq O$.
Since $H$ is neutral, we can choose a gyr-invariant
neighborhood $V$ of 0such that $H\oplus V \subseteq U\oplus H$.
Take a gyr-invariant
neighborhood $W$ of 0 such that \(W \subseteq U \cap V\).
Given that \(p(a_n) \to p(0)\) and \(p(b_n) \to p(0)\), there exists
$m \in \mathbb{N}$ such that $p(a_n) \in p(W)$ and \(b_n \in p(W)\) for each $n \geq m$, i.e., $a_n \in W\oplus H$ and $b_n \in W\oplus H$.

Then
\begin{align*}
a_n \oplus b_n  &\in (W\oplus H)\oplus(W\oplus H) \\
        &=(W\oplus(H\oplus W))\oplus H  \\
        &\subseteq (U\oplus(U\oplus H))\oplus H \\
        &=((U\oplus U)\oplus H)\oplus H.
\end{align*}
It follows that $p(a_n\oplus b_n) \in p(((U\oplus U)\oplus H)\oplus H)=p(U\oplus U) \subseteq O$ for each $n \geq m$. So $p(a_n\oplus b_n) \to p(0)$.
\end{proof}

\begin{lemma}\label{lem5.4}
Let $H$ be a closed neutral strong subgyrogroup of a strongly paratopological gyrogroup $G$.
 Let $\{A_n : n \in \omega\}$ be a decreasing $sn$-network at $p(0)$ in $G/H$. Then for every $n \in \omega$, there is $m \in \omega$ such that
 $p(T_{A_m}\oplus T_{A_m}) \subseteq A_n$, where \(p : G \to G/H\) is the natural quotient mapping, $T_{A_m} = p^{-1}(A_m)$.
\end{lemma}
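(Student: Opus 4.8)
The plan is to argue by contradiction and reduce the statement to the convergence result of Lemma~\ref{lem5.3} together with the defining property of a sequential neighborhood. Fix $n \in \omega$ and suppose the conclusion fails: then for every $m \in \omega$ we have $p(T_{A_m} \oplus T_{A_m}) \not\subseteq A_n$, so I may pick $x_m, y_m \in T_{A_m} = p^{-1}(A_m)$ with $p(x_m \oplus y_m) \notin A_n$. Reading the membership $x_m, y_m \in p^{-1}(A_m)$ off directly gives $p(x_m), p(y_m) \in A_m$.

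The first step is to verify that $p(x_m) \to p(0)$ and $p(y_m) \to p(0)$, using only that $\{A_m : m \in \omega\}$ is a decreasing network at $p(0)$. Indeed, given any neighborhood $U$ of $p(0)$, the network property yields $k$ with $A_k \subseteq U$, and since the family is decreasing we get $p(x_m) \in A_m \subseteq A_k \subseteq U$ for all $m \geq k$; the identical argument applies to $y_m$. The next step is to apply Lemma~\ref{lem5.3} to the two sequences $\{a_m\}_{m\in\mathbb{N}}$ and $\{b_m\}_{m\in\mathbb{N}}$ with $a_m = x_m$ and $b_m = y_m$; since both $\{p(a_m)\}$ and $\{p(b_m)\}$ converge to $p(0)$, the lemma gives $p(x_m \oplus y_m) \to p(0)$. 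Finally, because $A_n$ is a member of an $sn$-network at $p(0)$ it is a sequential neighborhood of $p(0)$, so every sequence converging to $p(0)$ is eventually in $A_n$; hence $p(x_m \oplus y_m) \in A_n$ for all sufficiently large $m$, contradicting the choice $p(x_m \oplus y_m) \notin A_n$ for every $m$. This contradiction establishes the lemma.

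I do not anticipate a serious obstacle, since the strongly paratopological structure and the neutrality of $H$ have already been absorbed into Lemma~\ref{lem5.3}; the only genuinely structural input is that lemma, and everything else is the interplay between the \emph{network} property (which manufactures the two convergent sequences) and the \emph{sequential neighborhood} property (which forces eventual membership in $A_n$). The single point requiring care is to invoke the decreasing-network hypothesis, rather than merely the network hypothesis, so that an \emph{entire tail} of each selection sequence lands inside a prescribed neighborhood and the two convergences are valid.
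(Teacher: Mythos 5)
Your proof is correct and follows essentially the same route as the paper's: a proof by contradiction that selects $x_m, y_m \in T_{A_m}$ with $p(x_m \oplus y_m) \notin A_n$, invokes Lemma~\ref{lem5.3} to get $p(x_m \oplus y_m) \to p(0)$, and contradicts the fact that $A_n$ is a sequential neighborhood of $p(0)$. The only difference is that you spell out why $p(x_m) \to p(0)$ and $p(y_m) \to p(0)$ (via the decreasing-network property), a detail the paper leaves implicit.
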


\begin{proof} Let $T_{A_n} = p^{-1}(A_n)$ for each $n \in \omega$. Suppose the contrary that there is an $n \in \omega$ such that $p(T_{A_m}\oplus T_{A_m}) \notin A_n$ for each $m \in \omega$. Then we can pick $a_m, b_m \in T_{A_m}$ with $p(a_m\oplus b_m) \notin A_n$ for each $m \in \omega$.
It follows from $\lim_{m \to \infty} p(a_m) = \lim_{m \to \infty} p(b_m) = p(0)$ and Lemma \ref{lem5.3} that $\lim_{m \to \infty} p(a_m\oplus b_m) = p(0)$.
This is a contradiction with the fact $A_n$ is a sequential neighborhood at $p(0)$ in $G/H$, which completes the proof.
\end{proof}

\begin{proposition}\label{pro5.5}\cite[Proposition 2.32]{JX3}
Let $(G, \tau,\oplus)$ be a paratopological gyrogroup and let $H$ be a $L$-subgyrogroup of $G$.
Then the canonical quotient mapping
$\pi$ from $G$ to its quotient topology on $G/H$ is an open and continuous mapping.
Moreover, if the subgyrogroup $H$ is a closed strong subgyrogroup,
 $G/H$ is a homogeneous $T_1$-space.
\end{proposition}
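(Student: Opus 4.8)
The plan is to separate the two assertions. The openness and continuity of $\pi$ do not require the full strength of the hypotheses: since a closed strong subgyrogroup is in particular an $L$-subgyrogroup, this part is exactly Proposition \ref{pro2.28}, and I would simply invoke it. For the record, the mechanism is that continuity of $\pi$ is immediate from the definition of the quotient topology $\widetilde{\tau}$ (the finest topology rendering $\pi$ continuous), while openness rests on the identity $\pi^{-1}(\pi(U))=U\oplus H$ for open $U$; this holds because the left cosets of an $L$-subgyrogroup partition $G$, so that the $\pi$-saturation of $U$ is $\bigcup_{u\in U}(u\oplus H)=U\oplus H$, which Proposition \ref{pro2.28} already certifies to be open.

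For the homogeneity and $T_1$ assertion I would exploit the induced left translations. First I would show that, when $H$ is a strong subgyrogroup, each left translation $L_a\colon x\mapsto a\oplus x$ descends to a well-defined map $\widetilde{L}_a$ on $G/H$ via $\widetilde{L}_a(\pi(x))=\pi(a\oplus x)$. The decisive computation uses the left gyroassociative law (Theorem \ref{the1.3}(2)):
\[
a\oplus(x\oplus H)=(a\oplus x)\oplus \text{gyr}[a,x](H)=(a\oplus x)\oplus H,
\]
where the last equality is precisely the gyr-invariance of $H$. Thus $L_a$ maps the coset $x\oplus H$ onto the single coset $(a\oplus x)\oplus H$, so $\widetilde{L}_a$ is well-defined (equivalently, $\pi(x)=\pi(x')$ forces $\pi(a\oplus x)=\pi(a\oplus x')$).

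Next I would verify that $\widetilde{L}_a$ is a self-homeomorphism of $G/H$. On $G$, the map $L_a$ is a homeomorphism with inverse $L_{\ominus a}$: both are continuous because $\oplus$ is continuous, and they are mutually inverse by the left cancellation law (Theorem \ref{the1.3}(5)). Since $\widetilde{L}_a\circ\pi=\pi\circ L_a$ is continuous and $\pi$ is a quotient map, $\widetilde{L}_a$ is continuous; applying the same reasoning to $\ominus a$ shows that $\widetilde{L}_{\ominus a}=\widetilde{L}_a^{-1}$ is continuous, so $\widetilde{L}_a$ is a homeomorphism. As $\widetilde{L}_a(\pi(0))=\pi(a)$, every point of $G/H$ is the image of $\pi(0)$ under a self-homeomorphism, and composing two such maps moves any point to any other; hence $G/H$ is homogeneous. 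For the $T_1$ property it then suffices, by homogeneity, to see that $\{\pi(0)\}$ is closed, and this follows because $\pi^{-1}(\{\pi(0)\})=H$ is closed by hypothesis, so the definition of $\widetilde{\tau}$ makes $\{\pi(0)\}$ closed.

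I expect the main obstacle to be the well-definedness step above. This is exactly where the $L$-subgyrogroup hypothesis that suffices for openness must be upgraded to genuine gyr-invariance: in $a\oplus(x\oplus H)=(a\oplus x)\oplus\text{gyr}[a,x](H)$ the second argument of the gyration is an arbitrary $x\in G$ rather than an element of $H$, so the weaker $L$-subgyrogroup identity $\text{gyr}[a,h](H)=H$ (with $h\in H$) does not apply. It is precisely the strongness of $H$ that is used essentially here, and no genuinely new idea is needed elsewhere.
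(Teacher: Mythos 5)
Your proposal is correct, but note that there is nothing in this paper to compare it against: the proposition is imported verbatim from \cite[Proposition 2.32]{JX3} with no proof given here, and its first assertion is already the earlier-quoted Proposition \ref{pro2.28} from \cite{JX2}. Your argument is the canonical one used in the cited literature (cf. \cite[Theorem 3.13]{B1} for the topological case): invoke Proposition \ref{pro2.28} for openness and continuity; descend left translations to $G/H$ via $a\oplus(x\oplus H)=(a\oplus x)\oplus \text{gyr}[a,x](H)=(a\oplus x)\oplus H$, where — as you correctly diagnose — full gyr-invariance (strongness) is indispensable because the second argument of the gyration ranges over all of $G$, not just $H$; observe that $\widetilde{L}_a$ and $\widetilde{L}_{\ominus a}$ are continuous using only continuity of $\oplus$ and the quotient property of $\pi$, so the argument survives in the paratopological setting where inversion need not be continuous; and obtain $T_1$ from homogeneity together with the closedness of $\pi^{-1}(\{\pi(0)\})=H$.
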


\begin{lemma}\label{lem5.4s}
Let \( H \) be a closed neutral strongly subgyrogroup of a strongly paratopological gyrogroup \( G \) such that the quotient space \( G/H \) is Hausdorff.
Suppose \(\{O_n(x) : n \in \omega, x \in G/H\}\) is a weak base on \( G/H \). Then, for every \( n \in \omega \),
put \(W_n(x) = p((g_x \oplus T_{O_n(0^*)}) \oplus T_{O_n(0^*)})\) where \( T_{O_n(x)} = p^{-1}(O_n(x)) \), \( p(0) = 0^* \), and \( p(g_x) = x \).
Then \(\{W_n(x) : n \in \omega, x \in G/H\}\) is a weak base on \(G/H\).
\end{lemma}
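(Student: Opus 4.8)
The plan is to verify the two defining properties of a weak base directly: that each $W_n(x)$ is a set containing $x$ with $\{W_n(x):n\in\omega\}$ a decreasing chain as $n$ grows, and that a set $U\subseteq G/H$ is open precisely when for every $x\in U$ there is some $n$ with $W_n(x)\subseteq U$. First I would observe that $\{O_n(x)\}$ being a weak base means (after passing to intersections if necessary) we may assume $O_{n+1}(x)\subseteq O_n(x)$, so the preimages $T_{O_n(0^*)}$ are decreasing in $n$; this forces $W_{n+1}(x)\subseteq W_n(x)$, and since $0\in T_{O_n(0^*)}$ we get $g_x=p^{-1}$-representative lying in $(g_x\oplus T_{O_n})\oplus T_{O_n}$, hence $x=p(g_x)\in W_n(x)$.

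The substantive content is the characterization of open sets. For the easy direction, suppose $U$ is open and $x\in U$. Then $O_n(0^*)$, being a weak base at $0^*$ and $U$ being open, gives (via homogeneity, translating by $g_x$) some $n$ with a neighborhood contained in $U$; the key is that because $G$ is strongly paratopological and $H$ is neutral, the ``double sum'' $W_n(x)$ shrinks into any prescribed neighborhood of $x$. Concretely, I would run the neutrality-plus-gyr-invariance computation exactly as in Lemma \ref{lem5.3} and the display $(*)$ of Proposition \ref{pro4.7}: choose a gyr-invariant $V$ with $p(g_x\oplus(V\oplus V))\subseteq U$ and $H\oplus V\subseteq (\text{suitable})\oplus H$, then pick $n$ with $O_n(0^*)\subseteq p(V)$ so that $T_{O_n(0^*)}\subseteq V\oplus H$, and use Proposition \ref{prop2.10s} to collapse $(g_x\oplus(V\oplus H))\oplus(V\oplus H)$ down to $(g_x\oplus(V\oplus V))\oplus H$, whence $W_n(x)\subseteq p(g_x\oplus(V\oplus V))\subseteq U$.

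The harder direction—and the step I expect to be the main obstacle—is showing that if $U$ satisfies ``for each $x\in U$ there is $n$ with $W_n(x)\subseteq U$,'' then $U$ is open. Here I would argue that $W_n(x)$ always contains $O_m(x)$ for some $m$: since $T_{O_n(0^*)}$ is a neighborhood of the coset $H$ (as $O_n(0^*)$ is a sequential neighborhood of $0^*$ and hence its preimage absorbs the identity), the set $(g_x\oplus T_{O_n(0^*)})\oplus T_{O_n(0^*)}$ contains a genuine open neighborhood of $g_x$, so $W_n(x)$ contains an open set about $x$; combined with the hypothesis this makes $U$ a neighborhood of each of its points, hence open. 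The delicate point is justifying that $T_{O_n(0^*)}$ genuinely contains a neighborhood of $H$ in $G$—this requires that the weak-base elements $O_n(0^*)$ be sequential neighborhoods of $0^*$ and that $p$ be open (Proposition \ref{pro5.5}), so that pulling back and adding produces an open core. Once this inclusion $O_m(x)\subseteq W_n(x)\subseteq U$ is secured for each $x\in U$, openness of $U$ follows immediately from $\{O_n(x)\}$ being a weak base, completing the verification that $\{W_n(x)\}$ is a weak base.
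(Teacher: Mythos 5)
Your setup and your ``easy'' direction are fine: the computation collapsing \((g_x\oplus(V\oplus H))\oplus(V\oplus H)\) into \((g_x\oplus(V\oplus V'))\oplus H\) via Proposition \ref{prop2.10s} and neutrality is correct, and it in fact re-derives by direct computation what the paper extracts from Lemma \ref{lem5.4} (whose proof goes through the sequential Lemma \ref{lem5.3} instead). The genuine gap is in your ``harder direction.'' There you assert that \(T_{O_n(0^*)}\) is a neighborhood of the coset \(H\) because \(O_n(0^*)\) is a sequential neighborhood of \(0^*\), and conclude that \(W_n(x)\) contains a genuine open set around \(x\). That inference is false: a sequential neighborhood of a point need not contain any open neighborhood of that point, and weak-base elements of a weakly first-countable space need not contain open sets --- this is precisely the difference between weak first-countability and first-countability. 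Worse, the claim is circular: if every \(W_n(x)\) contained an open neighborhood of \(x\), then combined with your easy direction the family \(\{W_n(x): n\in\omega\}\) would be an honest countable neighborhood base at each \(x\), making \(G/H\) first-countable outright; but first-countability of \(G/H\) is the conclusion of Theorem \ref{the5.3}, which this lemma exists to prove, so it cannot be used as an ingredient here.

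The correct replacement --- and, in essence, the paper's route --- works with sequential neighborhoods rather than neighborhoods. In a Hausdorff space, every element of a weak base at a point \emph{is} a sequential neighborhood of that point (this is where the Hausdorff hypothesis, which your argument never invokes, is actually needed). Since \(0\in T_{O_n(0^*)}\) gives \(W_n(x)\supseteq p(g_x\oplus T_{O_n(0^*)})=h_x(O_n(0^*))\), and the translation \(h_x\) is a homeomorphism of \(G/H\) carrying \(0^*\) to \(x\), each \(W_n(x)\) is a sequential neighborhood of \(x\). Now suppose \(U\) satisfies your condition and \(x\in U\), with \(n\) chosen so that \(W_n(x)\subseteq U\); if no \(O_m(x)\) were contained in \(U\), pick \(y_m\in O_m(x)\setminus U\); since the sets \(O_m(x)\) form a decreasing weak base at \(x\), we get \(y_m\to x\), so \(y_m\) eventually enters \(W_n(x)\subseteq U\), a contradiction. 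Hence some \(O_m(x)\subseteq U\), and \(U\) is open. (The paper packages the same idea slightly differently: Lemma \ref{lem5.4} shows that \(\{W_n(0^*)\}\) and \(\{O_n(0^*)\}\) are mutually cofinal at \(0^*\), and homogeneity via the homeomorphisms \(h_x\) transfers this to every point.)
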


\begin{proof}
By Proposition \ref{pro5.5}, $G/H$ is a homogeneous $T_1$-space.
For each $x \in G/H$, fix $g_x\in G$ such that \( p(g_x) = x \).
Suppose further that, for each
$a\in G/H$, $h_{a}$ is a left translation of $G/H$ onto $G/H$ such that $h_{a}(b)=p({g_a}\oplus(g_b\oplus H))=p(({g_a}\oplus g_b)\oplus H)$.
Obviously, $h_{a}$ is a homeomorphism.

Put $U_n(y) = h_x(O_n(h_x^{-1}(y))\oplus O_n(h_x^{-1}(y)))=p((g_x \oplus T_{O_n(h_x^{-1}(y)}) \oplus T_{O_n(h_x^{-1}(y)})$, for each $y \in G/H$.
We can get $U_n(x) = W_n(x)$, for each $n \in \omega$.
By Lemma \ref{lem5.4} , the sequences $\{W_n(0) : n \in \omega\}$ and $\{O_n(0) : n \in \omega\}$ are cofinal, for each $n \in \omega$.
It follows that $\{W_n(0) : n \in \omega, \; x \in X\}$ is a weak base on $G/H$.
Since $h_x$ is a homeomorphism of $G/H$ onto $G/H$, it is clear that $\{W_n(x) : n \in \omega, \; x\in G/H\}$ is a weak base on $G/H$.
\end{proof}

\begin{lemma}\label{lem5.6}
Let \( H \) be a closed neutral strongly subgyrogroup of a strongly paratopological gyrogroup \( G \) such that the quotient space \( G/H \) is Hausdorff. Suppose \(\{O_n(x) : n \in \omega, x \in G/H\}\) is a weak base on \( G/H \). Then, for every \( n \in \omega \), there exists \( m \in \omega \) such that
\( p((g_x \oplus T_{O_m(0^*)}) \oplus T_{O_m(0^*)}) \subseteq O_n(x), \)
where \( T_{O_m(x)} = p^{-1}(O_m(x)) \), \( p(0) = 0^* \), and \( p(g_x) = x \).
\end{lemma}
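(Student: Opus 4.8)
The plan is to deduce this statement directly from Lemma \ref{lem5.4s} combined with the definition of a weak base, using the homogeneity of $G/H$ to reduce everything to the identity $0^*$. First I would recall that Lemma \ref{lem5.4s} produces a \emph{new} weak base $\{W_n(x) : n \in \omega,\ x \in G/H\}$ on $G/H$, where $W_n(x) = p((g_x \oplus T_{O_n(0^*)}) \oplus T_{O_n(0^*)})$. The content of the present lemma is exactly the assertion that each of these $W_n$-sets is ``small'' relative to the original weak base $\{O_n(x)\}$: for every $n$ there is $m$ with $W_m(x) \subseteq O_n(x)$.

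The key step is to exploit that a weak base at a point is by definition \emph{decreasing and cofinal} in the filter of weak neighborhoods at that point. Since both $\{O_n(0^*) : n \in \omega\}$ and $\{W_n(0^*) : n \in \omega\}$ are weak bases at the single point $0^*$ (the latter by Lemma \ref{lem5.4s}), they must be mutually cofinal: for every $n \in \omega$ there exists $m \in \omega$ such that $W_m(0^*) \subseteq O_n(0^*)$. Written out, this says precisely
\[
p((g_{0^*} \oplus T_{O_m(0^*)}) \oplus T_{O_m(0^*)}) \subseteq O_n(0^*),
\]
which is the desired inclusion at the base point $x = 0^*$ (taking $g_{0^*} = 0$). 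This is the heart of the argument and essentially just unwinds the cofinality observation already recorded in the proof of Lemma \ref{lem5.4s}.

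Finally I would transport the inclusion to an arbitrary point $x \in G/H$ via the left-translation homeomorphism $h_x$ introduced in Lemma \ref{lem5.4s}. Since $h_x$ carries $O_n(0^*)$ to $O_n(x)$ (up to the cofinality already established) and carries $W_m(0^*)$ to $W_m(x)$, applying $h_x$ to the inclusion at $0^*$ yields $W_m(x) \subseteq O_n(x)$ for the same $m$, which is exactly the claim. The main obstacle I anticipate is purely bookkeeping: one must be careful that the homeomorphism $h_x$ genuinely intertwines the $O_n$-sets and $W_n$-sets at $0^*$ with those at $x$ in the way asserted, so that the single $m$ obtained at $0^*$ works uniformly for all $x$. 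This is where the homogeneity of $G/H$ (Proposition \ref{pro5.5}) and the explicit formula $W_n(x) = h_x(O_n(0^*) \oplus O_n(0^*))$ from Lemma \ref{lem5.4s} must be matched precisely; no hard estimate is needed beyond invoking Lemma \ref{lem5.4} to guarantee the $T_{O_m}$-translates behave correctly under $p$.
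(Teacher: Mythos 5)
There is a genuine gap, and it sits exactly where the content of the lemma lies. Your key step asserts that two weak bases at the same point must be mutually cofinal ``by definition,'' each being cofinal in ``the filter of weak neighborhoods'' at that point. That is not the definition: elements of a weak base need not be neighborhoods of the point (nor contain neighborhoods), so there is no single filter in which cofinality is automatic. The ``only if'' half of the weak-base definition only says that every \emph{open} set containing the point contains a weak base element, and $O_n(0^*)$ is not assumed open. Mutual cofinality of $\{W_m(x)\}$ in $\{O_n(x)\}$ is precisely the statement to be proved, and it requires an argument. The paper proves it by contradiction, directly at an arbitrary point $x$: if $W_m(x)\not\subseteq O_k(x)$ for all $m$, pick $x_m\in W_m(x)\setminus O_k(x)$; then $x_m\to x$ because $\{W_m(x)\}$ is a decreasing weak base at $x$ (Lemma \ref{lem5.4s}); then, using Hausdorffness and the weak-base property of $\{O_n(y)\}$, the set $P_0=\{x_m:m\in\omega\}$ is shown to be closed (every $y$ outside $P_0\cup\{x\}$ has some $O_{n(y)}(y)$ missing it, and $O_k(x)$ misses it), contradicting $x\in\overline{P_0}\setminus P_0$. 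Equivalently, one must prove that weak base elements of a Hausdorff space are sequential neighborhoods; your proposal contains no trace of this argument (nor a valid citation replacing it --- Lemma \ref{lem5.4} applies to sn-networks, and invoking it for a weak base needs exactly this missing fact), so at $x=0^*$ it simply assumes the conclusion.

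The homogeneity transport in your last step is also invalid, and fortunately unnecessary. The translation $h_x$ carries $O_n(0^*)$ to $p(g_x\oplus T_{O_n(0^*)})$, which bears no stated relation to $O_n(x)$: the given weak base is arbitrary and not assumed translation-invariant, so the inclusion obtained at $0^*$ cannot be pushed to an inclusion into $O_n(x)$. (There is additionally a bracketing issue caused by non-associativity: $h_x(p(T\oplus T))=p\bigl(g_x\oplus(T\oplus T)\bigr)$, whereas the lemma concerns $p\bigl((g_x\oplus T)\oplus T\bigr)$; Proposition \ref{prop2.10s} identifies these only when the right-hand factor is gyr-invariant, which $T_{O_m(0^*)}$ need not be.) If instead you prove the cofinality claim properly, as a general fact about any two weak bases on a Hausdorff space, it can be applied verbatim at each point $x$ --- this is exactly what the paper does --- and no reduction to $0^*$ is needed at all.
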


\begin{proof}
By Proposition \ref{pro5.5}, $G/H$ is a homogeneous $T_1$-space.
Let \( T_{O_n(x)} = p^{-1}(O_n(x)) \) for each \( n \in \omega \) and each \( x \in G/H \).
Assume the contrary. Then, for some \( x \in G/H \) and some \( k \in \omega \), we have \( p((g_x\oplus T_{O_m(0^*)})\oplus T_{O_m(0^*)} \setminus O_k(x) \neq \emptyset \) for each \( m \in \omega \), and we fix \( x_m \in p((g_x\oplus T_{O_m(0^*)})\oplus T_{O_m(0^*)}) \setminus O_k(x) \).
Clearly, by Lemma \ref{lem5.4s} the sequence \( \eta = \{x_m : m \in \omega\} \) converges to \( x \).
Since \( G/H \) is Hausdorff, the set \( P = \{x_m : m \in \omega\} \cup \{x\} \) is closed in \( G/H \). Let \( P_0 = P \setminus \{x\} = \{x_m : m \in \omega\} \). Since \( \{O_n(x) : n \in \omega, \: x \in G/H\} \) is a weak base on \( G/H \), for each \( y \in G/H \setminus P \) there exists \( n(y) \in \omega \) such that \( O_{n(y)}(y) \cap P = O_{n(y)}(y) \cap P_0 =\emptyset\). We also have \( P_0 \cap O_k(x) = \emptyset \).
Since \( \{O_n(x) : n \in \omega, \: x \in G/H\} \) is a weak base on \(G/H \), it follows that the set \( P_0 \) is closed in \( G/H\), a contradiction with \( x\in \overline{P_0} \setminus P_0 \).
\end{proof}

\begin{theorem}\label{the5.3}
Let $H$ be a closed strong subgyrogroup of a paratopological gyrogroup $G$
such that \( G/H \) is Hausdorff. Then \( G/H \) is first-countable if and only if \( G/H \) is weakly first-countable.
\end{theorem}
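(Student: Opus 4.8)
The plan is to establish the nontrivial direction, that weak first-countability implies first-countability; the converse is immediate, since a decreasing countable base of open neighbourhoods at each point is, in particular, a weak base. So suppose $G/H$ carries a weak base $\{O_n(x):n\in\omega,\ x\in G/H\}$, which we may assume decreasing in $n$. With $0^*=p(0)$ and $T_{O_n(0^*)}=p^{-1}(O_n(0^*))$, I would replace this family by the ``doubled'' family $W_n(x)=p((g_x\oplus T_{O_n(0^*)})\oplus T_{O_n(0^*)})$. By Lemma~\ref{lem5.4s} the family $\{W_n(x)\}$ is again a weak base on $G/H$, and by Lemma~\ref{lem5.6} it refines $\{O_n(x)\}$; both facts rest on the purely additive control provided by Lemmas~\ref{lem5.3} and~\ref{lem5.4} (a coordinatewise sum of null sequences is null, and the double of an $sn$-network element falls inside any prescribed larger one), which is exactly the paratopological substitute for the inversion-based estimates used in the topological setting.

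The main reduction is that it suffices to prove each $W_n(0^*)$ is a genuine neighbourhood of $0^*$. Granting this, homogeneity of $G/H$ (Proposition~\ref{pro5.5}) together with the translations $h_x$ of Lemma~\ref{lem5.4s}, which are homeomorphisms sending $0^*$ to $x$ and $W_n(0^*)$ onto $W_n(x)$, shows that every $W_n(x)$ is a neighbourhood of $x$. For any open $U\ni x$, the weak base property of $\{W_n(x)\}$ furnishes an $n$ with $W_n(x)\subseteq U$, and then $\mathrm{int}\,W_n(x)$ is an open neighbourhood of $x$ contained in $U$. Hence $\{\mathrm{int}\,W_n(x):n\in\omega\}$ is a countable base of open neighbourhoods at $x$, and $G/H$ is first-countable.

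The hard part, and the step I expect to be the true obstacle, is the crux claim that $W_n(0^*)=p(T_{O_n(0^*)}\oplus T_{O_n(0^*)})$ is a neighbourhood of $0^*$. Every weak base element is automatically a \emph{sequential} neighbourhood, but in a sequential, non-Fr\'echet--Urysohn space (such as $S_\omega$) a sequential neighbourhood need not be a neighbourhood, and homogeneity by itself does not exclude this; the upgrade must genuinely use the gyrogroup operation and the neutrality of $H$. Since $p$ is open and the set $T_{O_n(0^*)}\oplus T_{O_n(0^*)}$ is saturated, I would reduce the claim to locating an open subset of $G$ containing $0$ inside this double of the saturated sequential neighbourhood $T_{O_n(0^*)}$ of $0$. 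The natural attack is by contradiction: if $0^*$ failed to lie in the interior, one would try to manufacture, from the countable weak base and the refinement $W_{m}(0^*)\subseteq O_n(0^*)$, a configuration of sequences whose $\oplus$-combinations are controlled by Lemma~\ref{lem5.3} and converge to $0^*$ while witnessing the failure, contradicting that $O_n(0^*)$ is a sequential neighbourhood. Making this extraction precise---rather than the bookkeeping with homogeneity and the weak base axiom---is where the real work of the proof lies.
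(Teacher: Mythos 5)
You have the right skeleton---the trivial direction, the normalization of the weak base via Lemma \ref{lem5.4s}, the refinement via Lemma \ref{lem5.6}, and the homogeneity bookkeeping all match what is needed---but the proposal is missing precisely its core. You reduce everything to the claim that $W_n(0^*)$ is a neighborhood of $0^*$ and then explicitly defer that claim (``where the real work of the proof lies''), offering only a strategy by contradiction: extract sequences witnessing the failure, control their $\oplus$-combinations by Lemma \ref{lem5.3}, and contradict the fact that $O_n(0^*)$ is a sequential neighborhood. That strategy does not work as sketched. If $W_n(0^*)$ fails to be a neighborhood of $0^*$, all you get is $0^*\in\overline{(G/H)\setminus W_n(0^*)}$; since a weakly first-countable space is merely sequential, not Fr\'{e}chet--Urysohn, no sequence in the complement converging to $0^*$ need exist. (Your example should be Arens' space $S_2$ rather than $S_\omega$: the fan $S_\omega$ is Fr\'{e}chet--Urysohn, and there sequential neighborhoods \emph{are} neighborhoods; it is in $S_2$, which is weakly first-countable but not first-countable, that weak base elements at $\infty$ genuinely fail to be neighborhoods.) Lemmas \ref{lem5.3} and \ref{lem5.4} only control images of sums of sequences that already converge to $p(0)$, so the failure of neighborhoodness hands them nothing to act on, and the contradiction cannot be extracted in the way you describe.

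The paper closes exactly this gap with a sequence-free device. After normalizing $O_n(x)=p(g_x\oplus T_{O_n(0^*)})$ (Lemma \ref{lem5.4s}), it sets, for each $n\in\omega$,
\[
B_n=\bigl\{x\in O_n(0^*) : \text{there exists } k\in\omega \text{ such that } p(g_x\oplus T_{O_k(0^*)})\subseteq O_n(0^*)\bigr\},
\]
so that $0^*\in B_n\subseteq O_n(0^*)$, and proves that $B_n$ is \emph{open} by invoking the ``if'' half of the weak base axiom: given $y\in B_n$ with witness $k$, Lemma \ref{lem5.6} yields $m$ with $p((g_y\oplus T_{O_m(0^*)})\oplus T_{O_m(0^*)})\subseteq p(g_y\oplus T_{O_k(0^*)})\subseteq O_n(0^*)$, whence every point of $O_m(y)=p(g_y\oplus T_{O_m(0^*)})$ has $m$ as its own witness, i.e.\ $O_m(y)\subseteq B_n$; and a set containing a weak base element around each of its points is open. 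Then $\{B_n : n\in\omega\}$ is a countable open neighborhood base at $0^*$, and homogeneity finishes the proof. The idea missing from your proposal is that the weak base axiom itself is the upgrading mechanism---one carves the open set $B_n$ directly out of $O_n(0^*)$---rather than any argument by contradiction via convergent sequences.
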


\begin{proof}
The necessity is clear. We only need to prove the sufficiency.

Let \( (G, \tau, \oplus) \) be a paratopological gyrogroup, and let \( p : G \to G/H \) denote the natural quotient map, where \( 0^* = p(0) \). For each \( x \in G/H \), fix \( g_x \in G \) such that \( p(g_x) = x \), with \( g_{0^*} = 0 \). For a subset \( O \subseteq G/H \), define \( T_O = p^{-1}(O) \). Let \(\{O_n(x) : n \in \omega, x \in G/H\}\) be a weak base on \( G/H \). For each \( x \in G/H \) and \( n \in \omega \), let \( Q_n(x) = p((g_x \oplus T_{O_n(0^*)}) \oplus T_{O_n(0^*)}) \). By Lemma \ref{lem5.4s}, we may assume that \( O_n(x) = p(g_x \oplus T_{O_n(0^*)}) \) for each \( x \in G/H \) and each \( n \in \omega \).

We now show that, for each \( n \in \omega \), \( O_n(0^*) \) contains an open neighborhood of \( 0^* \) in \( G/H \). For each \( n \in \omega \), define
\begin{align*}
B_n = \{x \in O_n(0^*) : \text{ there exists } k \in \omega \text{ such that } p(g_x \oplus T_{O_k(0^*)}) \subseteq O_n(0^*)\}.
\end{align*}
Clearly, for each \( n \in \omega \), we have \( 0^* \in B_n \subseteq O_n(0^*) \). We claim that \( B_n \) is open in \( G/H \).

To prove this, take any \( y \in B_n \). By the definition of \( B_n \), there exists \( k \in \omega \) such that \( p(g_y \oplus T_{O_k(0^*)}) \subseteq O_n(0^*) \). By Lemma \ref{lem5.6}, there exists \( m \in \omega \) such that
$p((g_y \oplus T_{O_m(0^*)}) \oplus T_{O_m(0^*)}) \subseteq p(g_y \oplus T_{O_k(0^*)})$,
from which it follows that \( p((g_y \oplus T_{O_m(0^*)}) \oplus T_{O_m(0^*)}) \subseteq O_n(0^*) \). This implies that
$O_m(y) = p(g_y \oplus T_{O_m(0^*)}) \subseteq B_n.$
Since \( y \) was an arbitrary point of \( B_n \), it follows that \( B_n \) is open in \( G/H \).

Thus, \(\{B_n : n \in \omega\}\) forms a countable neighborhood base of \( G/H \) at \( 0^* \), which means that \( G/H \) is first-countable.
\end{proof}

\begin{fact}
\cite{Lin}\label{fact2.6}
Let $X$ be a space. Then
\begin{enumerate}
   \item $X$ is first-countable $\Leftrightarrow$ $X$ is Fr\'{e}chet-Urysohn and weakly first-countable.
    \item $X$ is weakly first-countable $\Leftrightarrow$ $X$ is sequential and $snf$-countable.
    \item $X$ is $snf$-countable $\Leftrightarrow$ $X$ is a $csf$-countable and $\alpha_4$-space.
    \item $X$ is strongly Fr¨¦chet-Urysohn $\Leftrightarrow$ $X$ is a Fr¨¦chet-Urysohn $\alpha_4$-space.
    \item $X$ is sequential $\Leftrightarrow$ $X$ is a $k$-space and every compact subset of $X$ is sequential
    \cite[Lemma 1.5]{Cha}.
\end{enumerate}
\end{fact}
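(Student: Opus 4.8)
The plan is to reduce all five equivalences to two structural lemmas about sequential neighborhoods and one claim about weak bases, and then to run short deduction chains. Recall that a set $P$ is a \emph{sequential neighborhood} of $x$ if every sequence converging to $x$ is eventually in $P$. The two workhorses I would establish first are: (A) in a Fréchet–Urysohn space every sequential neighborhood of a point $x$ is an actual neighborhood of $x$ — for if $x$ were not interior to $P$ then $x\in\overline{X\setminus P}$, and Fréchet–Urysohnness would produce a sequence in $X\setminus P$ converging to $x$, contradicting that $P$ is a sequential neighborhood; and (B) a space is sequential if and only if every set that is a sequential neighborhood of each of its points is open (the ``sequentially open equals open'' reformulation of sequentiality). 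A third preliminary claim, valid in the Hausdorff setting assumed throughout, is that in a weakly first-countable space every weak-base element $B(n,x)$ is a sequential neighborhood of $x$; I would prove this by assuming some $y_k\to x$ meets $X\setminus B(n,x)$ infinitely often, letting $S$ be that set, noting $S$ is not closed since $x\in\overline S\setminus S$, extracting from the failure of the openness axiom for $X\setminus S$ a point $z$ with $B(m,z)\cap S\neq\emptyset$ for all $m$, producing a sequence in $S$ converging to $z$, and using Hausdorffness to force $z=x$, which contradicts $B(n,x)\cap S=\emptyset$.

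With these in hand the equivalences fall out along short chains. For (1), first-countability trivially gives Fréchet–Urysohnness and, using a decreasing countable base as the weak base, weak first-countability; conversely, weak first-countability turns the weak base into an sn-network (a network by the openness axiom, with sequential-neighborhood elements by the preliminary claim), and (A) upgrades each element to a genuine neighborhood, so one obtains a countable neighborhood base. For (2), weak first-countability yields sequentiality directly (pick $a_n\in B(n,x)\cap A$ when $X\setminus A$ fails to be open) and yields snf-countability by the preliminary claim; conversely, from a countable sn-network one passes to decreasing finite intersections, which remain sequential neighborhoods and a network, and verifies the weak-base openness axiom using (B) together with the sequential-neighborhood property. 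For (4), taking $A_n=A$ shows SFU implies Fréchet–Urysohn, taking $A_n=S_n$ shows SFU implies $\alpha_4$, and conversely one replaces each $A_n$ (with $x\in\overline{A_n}$) by a sequence $S_n\subseteq A_n$ converging to $x$ via Fréchet–Urysohnness and applies $\alpha_4$ to the resulting sheaf. For (5), I would invoke the standard $k$-space decomposition: a sequentially closed set meets every compact, hence (by hypothesis) sequential, subset in a sequentially closed and therefore closed subset, so the $k$-space property forces it closed; the reverse direction is the routine fact that sequential spaces are $k$-spaces, and the clause about compact subsets is exactly \cite[Lemma 1.5]{Cha}.

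The main obstacle is item (3), specifically that csf-countability together with $\alpha_4$ yields snf-countability. The forward direction is easy: an sn-network is automatically a cs-network, since its elements are sequential neighborhoods and hence any sequence converging to $x$ is eventually inside the network element chosen below a given neighborhood; and $\alpha_4$ follows by selecting, for a sheaf $\{S_n\}$, a point $b_n$ from the tail $S_n\cap P_n$ in a decreasing sn-network and observing $b_n\to x$. The reverse direction is delicate because cs-network elements are ``thin'' — they must fit inside prescribed neighborhoods but need only capture one sequence at a time — whereas sn-network elements must be sequential neighborhoods capturing \emph{all} sequences converging to $x$ simultaneously. The plan is to use $\alpha_4$ as precisely the device that diagonalizes over the countably many convergent sequences: inside any neighborhood $U$ one assembles, from suitable members of the countable cs-network, a set forced to be a sequential neighborhood, because a sequence escaping it could be woven by $\alpha_4$ into a single convergent sequence that the cs-network fails to cover. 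Making this precise — deciding which cs-network elements to combine, and checking both that the result lies in $U$ and that it is a sequential neighborhood — is where the real work lies, and it is the one place where neither (A), (B), nor the weak-base claim suffices on its own.
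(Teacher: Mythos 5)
Your proposal should first be measured against what the paper actually does: the paper gives no proof of this Fact at all, quoting items (1)--(4) from \cite{Lin} and item (5) from \cite[Lemma 1.5]{Cha}, so what you are attempting is a reconstruction of the proofs in Lin's book rather than a comparison with an internal argument. Your preliminaries are sound: (A) and (B) are correct, and so is the claim that weak-base elements are sequential neighborhoods (your argument tacitly uses that any choice $s_m \in B(m,z)$ yields $s_m \to z$, which follows from the openness axiom plus the monotonicity $B(m+1,z)\subseteq B(m,z)$, and Hausdorffness handles the limit identification). On that basis your treatments of (1), (2) and (5) are complete and correct, and (4) is the standard argument (though with the paper's literal phrasing of (SFU) the constant family $A_n=A$ only yields a convergent sequence \emph{meeting} $A$, so one should work with the usual decreasing-family formulation to get FU).

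The genuine gap is exactly where you flagged it: item (3), the implication csf-countable $+$ $\alpha_4$ $\Rightarrow$ snf-countable, is never proved, and it is the only nontrivial implication in the entire Fact. Worse, the natural way to "assemble suitable members of the cs-network" --- taking finite \emph{intersections} of the members lying inside a neighborhood $U$ --- fails: a sequence avoiding $Q_1\cap\dots\cap Q_n$ need only miss one $Q_i$, so the diagonal sequence produced by $\alpha_4$ is perfectly compatible with the cs-network property and no contradiction arises. The missing idea is to use finite \emph{unions}. Let $\mathcal{P}$ be the countable cs-network at $x$, fix an open $U\ni x$, enumerate $\{Q_n : n\in\mathbb{N}\}=\{P\in\mathcal{P}: P\subseteq U\}$, and suppose no $Q_1\cup\dots\cup Q_n$ is a sequential neighborhood of $x$; choose sequences $S_n\to x$ with $S_n\cap(Q_1\cup\dots\cup Q_n)=\emptyset$. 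Let $D$ be the set of points lying in infinitely many $S_n$; every point of $D$ misses every $Q_j$, so if some $S_n$ has infinitely many terms in $D$, applying the cs-network property to that subsequence already gives a contradiction; otherwise prune the finitely many $D$-terms from each $S_n$, so that no point lies in infinitely many of the pruned sequences. Now $\alpha_4$ yields $T\to x$ meeting infinitely many $S_{n_k}$; by the pruning, the meeting points $t_k\in T\cap S_{n_k}$ may be taken pairwise distinct, hence $t_k\to x$, and $t_k\notin Q_1\cup\dots\cup Q_{n_k}$. The cs-network property applied to $(t_k)$ gives some $Q_j$ containing a tail of $(t_k)$, contradicting $t_k\notin Q_j$ once $n_k\geq j$. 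Since finite unions of sets inside $U$ stay inside $U$, the countable family of all finite unions of members of $\mathcal{P}$ that are sequential neighborhoods of $x$ is the required sn-network; without this (or an equivalent) argument your proposal does not establish item (3).
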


Since a $k$-space with countable pseudocharacter is a sequential space, based on Fact \ref{fact2.6}(2) as well as Theorems \ref{the5.3} and \ref{the1.1s}, we can derive the following result.

\begin{corollary}
Let $H$ be a closed neutral strong subgyrogroup of a strongly topological gyrogroup $G$.
Then $G/H$ is metrizable if and only if $G/H$ is an snf-countable $k$-space with countable pseudocharacter.
\end{corollary}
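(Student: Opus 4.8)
The plan is to prove the corollary by reducing the metrizability question to first-countability, since Theorem~\ref{the1.1s} already establishes that $G/H$ is metrizable if and only if it is first-countable. Thus the entire task is to show that an $snf$-countable $k$-space with countable pseudocharacter is first-countable, and then invoke Theorem~\ref{the1.1s}. The reverse implication is immediate: a metrizable space is first-countable, hence weakly first-countable, hence $snf$-countable by Fact~\ref{fact2.6}(2); it is certainly a $k$-space; and countable pseudocharacter follows since points are closed $G_\delta$-sets in a metrizable space.

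For the forward direction I would first observe that a $k$-space with countable pseudocharacter is a sequential space, as already noted in the excerpt just before the statement. The key is that countable pseudocharacter forces every compact subset to be metrizable (a compact Hausdorff space of countable pseudocharacter is first-countable, hence by compactness sequential), so by Fact~\ref{fact2.6}(5) the $k$-space is sequential. Once sequentiality is in hand, I would combine it with the $snf$-countable hypothesis: Fact~\ref{fact2.6}(2) states precisely that a space is weakly first-countable if and only if it is sequential and $snf$-countable. Therefore $G/H$ is weakly first-countable.

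The final step is to upgrade weak first-countability to genuine first-countability. Here is where Theorem~\ref{the5.3} does the heavy lifting: it asserts that for a quotient $G/H$ of a (para)topological gyrogroup under a closed strong subgyrogroup with $G/H$ Hausdorff, weak first-countability is equivalent to first-countability. Since all spaces in the paper are Hausdorff and our $H$ is a closed neutral strong subgyrogroup of a strongly topological gyrogroup (so in particular the hypotheses of Theorem~\ref{the5.3} are met), we conclude that $G/H$ is first-countable. Applying Theorem~\ref{the1.1s} then yields metrizability.

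The main obstacle, and the step requiring the most care, is the passage from $k$-space plus countable pseudocharacter to sequentiality via Fact~\ref{fact2.6}(5); one must verify that every compact subspace is sequential, which relies on the standard fact that compactness together with countable pseudocharacter yields first-countability (hence sequentiality) on the compact piece. Everything else is a direct concatenation of the cited facts and theorems: the logical chain is $snf$-countable $k$-space with countable pseudocharacter $\Rightarrow$ sequential $snf$-countable $\Rightarrow$ weakly first-countable (Fact~\ref{fact2.6}(2)) $\Rightarrow$ first-countable (Theorem~\ref{the5.3}) $\Rightarrow$ metrizable (Theorem~\ref{the1.1s}).
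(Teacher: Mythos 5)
Your proposal is correct and follows essentially the same route as the paper: the paper derives this corollary in one line from the observation that a $k$-space with countable pseudocharacter is sequential, combined with Fact~\ref{fact2.6}(2), Theorem~\ref{the5.3}, and Theorem~\ref{the1.1s}, which is exactly your chain. Your write-up merely adds the (correct) justification of the sequentiality step via compact subspaces and Fact~\ref{fact2.6}(5), and of the Hausdorffness of $G/H$ needed for Theorem~\ref{the5.3}, details the paper leaves implicit.
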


A family $\mathcal{B}$ of open subsets of a space $X$ is called a {\it~$\pi$-base} of $X$ at a point $x \in X$ if all elements of $\mathcal{B}$ are nonempty and every open neighborhood of $x$ in $X$ contains an element of $\mathcal{B}$. As we know, every compact space $X$ with countable tightness has countable $\pi$-base at each point $x$ of $X$.

\begin{proposition}
Let $H$ be a closed neutral strong subgyrogroup of a strongly topological gyrogroup $G$ such that $G/H$ has a Hausdorff compactification $b(G/H)$ of countable tightness. Then $G/H$ is metrizable.
\end{proposition}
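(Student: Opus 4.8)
The plan is to bring the countable $\pi$-character of the compactification down to $G/H$, then to promote countable $\pi$-character to first-countability using exactly the algebraic machinery behind Proposition~\ref{prop3.3}, after which Theorem~\ref{the1.1s} closes the argument.

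First I would observe that $b(G/H)$, being a Hausdorff compactification, is compact Hausdorff and has countable tightness by hypothesis, so by the fact recalled just before the statement it has a countable $\pi$-base at each of its points. I then transfer this to the dense subspace $G/H$. Fix $x \in G/H$ and a countable $\pi$-base $\{W_n : n \in \omega\}$ of $b(G/H)$ at $x$; by density of $G/H$ in $b(G/H)$ each trace $V_n = W_n \cap (G/H)$ is a nonempty open subset of $G/H$. The family $\{V_n : n \in \omega\}$ is a $\pi$-base at $x$ in $G/H$: any open $O \ni x$ in $G/H$ has the form $O' \cap (G/H)$ for some open $O' \ni x$ in $b(G/H)$, some $W_n \subseteq O'$, and hence $V_n \subseteq O$. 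Thus $G/H$ has countable $\pi$-character, and since $G/H$ is homogeneous it suffices to produce a countable local base at $0^* = p(0)$.

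Next, fixing a countable $\pi$-base $\{V_n : n \in \omega\}$ at $0^*$ and writing $T_n = p^{-1}(V_n)$, I claim $\{p(\ominus T_n \oplus T_n) : n \in \omega\}$ is a countable local base at $0^*$. Each $\ominus T_n \oplus T_n$ is open in $G$ (a union of left translates of the open set $T_n$) and contains $0 = \ominus t \oplus t$ for any $t \in T_n$, so, $p$ being open, each $p(\ominus T_n \oplus T_n)$ is an open neighborhood of $0^*$. For the base property, given a neighborhood $O$ of $0^*$ I would rerun the computation in the proof of Proposition~\ref{prop3.3}: pick a neighborhood $U$ of $0$ with $p(U) \subseteq O$, then by neutrality of $H$ and strongness of $G$ a gyr-invariant $V$ with $V \oplus V \subseteq U$ and $H \oplus (V \oplus V) \subseteq U \oplus H$, and by Lemma~\ref{lem3.2} a neighborhood $W$ of $0$ with $\ominus(W \oplus H) \subseteq H \oplus V$ and $W \subseteq V$. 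As $p(W)$ is a neighborhood of $0^*$ and $\{V_n\}$ is a $\pi$-base, there is $n$ with $V_n \subseteq p(W)$, whence $T_n \subseteq p^{-1}(p(W)) = W \oplus H$. Then, exactly as in Proposition~\ref{prop3.3} (using Proposition~\ref{prop2.10s} and the gyr-invariance of $H$),
\[
\ominus T_n \oplus T_n \subseteq \ominus(W\oplus H)\oplus(W\oplus H) \subseteq (H\oplus V)\oplus(V\oplus H) = \bigl(H\oplus(V\oplus V)\bigr)\oplus H \subseteq U\oplus H,
\]
so $p(\ominus T_n \oplus T_n) \subseteq p(U) \subseteq O$. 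This makes $G/H$ first-countable at $0^*$, hence first-countable by homogeneity, and metrizable by Theorem~\ref{the1.1s}.

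The only genuinely delicate step is the third one. One must check that the weaker $\pi$-base property (every neighborhood merely \emph{contains} some $V_n$) still feeds the Proposition~\ref{prop3.3} computation, which it does because all we ever use there is the existence of an index $n$ with $V_n \subseteq p(W)$; and one must verify the identity $p^{-1}(p(W)) = W \oplus H$, which follows from $H$ being a subgyrogroup together with left cancellation, so that $p$ collapses precisely the cosets of $H$. Everything else — openness and regrouping of the sets $\ominus T_n \oplus T_n$, and the transfer of the $\pi$-base through the dense embedding — is routine.
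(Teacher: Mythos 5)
Your proposal is correct and takes essentially the same route as the paper's own proof: both transfer the countable $\pi$-base from the compact space of countable tightness down to $G/H$, show that the family $\{p(\ominus T_n\oplus T_n)\}$ is a local base at $0^*$ via exactly the same choice of gyr-invariant neighborhoods $U,V,W$ (using neutrality, Lemma~\ref{lem3.2} and Proposition~\ref{prop2.10s}), and then invoke Theorem~\ref{the1.1s}. The only difference is that you make explicit several steps the paper glosses over (the density argument transferring the $\pi$-base, the openness of $\ominus T_n\oplus T_n$, and the identity $p^{-1}(p(W))=W\oplus H$), which only strengthens the write-up.
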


\begin{proof}
Consider the natural quotient mapping $p : G \to G/H$ and let $0^* = p(0)$, where 0
is the identity in $G$.
Given that $b(G/H)$ is compact and possesses countable tightness, it follows that $b(G/H)$ has a countable $\pi$-base at every point within $G/H$. Let $\mathcal{B}$ be a countable $\pi$-base at $0^*$ in $G/H$. For each $B \in \mathcal{B}$, define $T_B = p^{-1}(B)$.
Set $\gamma = \{p(\ominus T_B\oplus T_B) : B \in \mathcal{B}\}$. It can be shown that $\gamma$ forms an open neighborhood base of $0^*$ in $G/H$. This assertion holds because all elements of $\mathcal{B}$ are open sets in $G/H$ and include $0^*$. Suppose $O$ is an open neighborhood of $0^*$ in $G/H$.
Since $H$ is neutral and $G$ is a strongly topological gyrogroup, there exit gyr-invariant
neighborhoods $U, V, W$ of 0 such that $W \subseteq V$, $V\oplus V \subseteq U$, $H\oplus (V\oplus V)\subseteq U\oplus H$, and $p(U) \subseteq O$.
Since $\mathcal{B}$ serves as a $\pi$-base at $0^*$ in $G/H$, there exists some $B \in \mathcal{B}$ such that $B \subseteq p(W)$. Consequently, we have $T_B \subseteq W\oplus H$. Therefore,
\begin{align*}
&0 \in \ominus T_A\oplus T_A
\\&\subseteq \ominus(W\oplus H)\oplus(W\oplus H)
\\&\subseteq (H\oplus V)\oplus(W\oplus H)\quad\text{by Lemma \ref{lem3.2}~}
\\&\subseteq (H\oplus V)\oplus(V\oplus H)
\\&=(H\oplus (V\oplus V))\oplus H \quad\text{by Proposition \ref{prop2.10s}~}
\\&\subseteq(U\oplus H)\oplus H
\\&=U\oplus H.
\end{align*}
This implies that $0^* \in p(\ominus T_B\oplus T_B) \subseteq p(U) \subseteq O$. Consequently, $G/H$ is first-countable. Therefore, by Theorem \ref{the1.1s}, we can conclude that $G/H$ is metrizable.
\end{proof}

\begin{lemma}\cite[Lemma 1.5.22]{En89}\label{lem5.12}
    Let $f$ be an open continuous mapping from a space $X$ onto a space $Y$ and $B \subseteq Y$. Then $\overline{f^{-1}(B)} = f^{-1}(\overline{B})$.
\end{lemma}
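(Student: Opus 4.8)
The plan is to prove Lemma \ref{lem5.12}, which is the standard fact that open continuous mappings satisfy $\overline{f^{-1}(B)} = f^{-1}(\overline{B})$ for every subset $B$ of the codomain. I would establish the two inclusions separately, using continuity for one direction and openness for the other.

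First I would prove $\overline{f^{-1}(B)} \subseteq f^{-1}(\overline{B})$, which uses only continuity. Since $B \subseteq \overline{B}$ and $\overline{B}$ is closed in $Y$, its preimage $f^{-1}(\overline{B})$ is closed in $X$ by continuity of $f$. As $f^{-1}(B) \subseteq f^{-1}(\overline{B})$ and the right-hand side is a closed set containing $f^{-1}(B)$, it follows that $\overline{f^{-1}(B)} \subseteq f^{-1}(\overline{B})$.

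For the reverse inclusion $f^{-1}(\overline{B}) \subseteq \overline{f^{-1}(B)}$, I would use openness together with surjectivity. Take any $x \in f^{-1}(\overline{B})$, so $f(x) \in \overline{B}$, and let $U$ be an arbitrary open neighborhood of $x$ in $X$. Because $f$ is open, $f(U)$ is an open neighborhood of $f(x)$, and since $f(x) \in \overline{B}$ we have $f(U) \cap B \neq \emptyset$. Choosing a point $y \in f(U) \cap B$ and a preimage $x' \in U$ with $f(x') = y$ (possible since $y \in f(U)$), we get $x' \in U \cap f^{-1}(B)$, so $U$ meets $f^{-1}(B)$. As $U$ was arbitrary, $x \in \overline{f^{-1}(B)}$.

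I do not expect a genuine obstacle here, since both inclusions are routine neighborhood arguments; the only point requiring care is to invoke the correct property in each direction (continuity for the first inclusion, openness together with the surjectivity of $f$ for the second) and to verify that the preimage $x'$ of $y$ can indeed be chosen inside $U$, which is exactly what $y \in f(U)$ guarantees. Combining the two inclusions yields the desired equality.
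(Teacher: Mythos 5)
Your proof is correct. The paper itself gives no argument for this lemma---it is quoted directly from Engelking \cite[Lemma 1.5.22]{En89}---and your two-inclusion proof (continuity gives $\overline{f^{-1}(B)} \subseteq f^{-1}(\overline{B})$ since $f^{-1}(\overline{B})$ is closed; openness gives the reverse inclusion via the neighborhood argument) is exactly the standard textbook proof of this fact. One minor remark: surjectivity of $f$ is not actually used in your argument, nor is it needed; the point $x' \in U$ with $f(x')=y$ exists simply because $y \in f(U)$, as you yourself observe, so the invocation of surjectivity in your second paragraph is superfluous rather than erroneous.
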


Let $\Omega$ be a set and $I$ be a partially ordered set with an order $\leq$. We say that a family $\{A_i\}_{i \in I}$ of subsets of $\Omega$ is $I$-decreasing if $A_j \subseteq A_i$ for every $i \leq j$ in $I$. One of the most important example of partially ordered sets is the product $\mathbb{N}^{\mathbb{N}}$ endowed with the natural partial order, i.e., $\alpha \leq \beta$ if $\alpha_i \leq \beta_i$ for all $i \in \mathbb{N}$, where $\alpha = (\alpha_i)_{i \in \mathbb{N}}$ and $\beta = (\beta_i)_{i \in \mathbb{N}}$. For every $\alpha = (\alpha_i)_{i \in \mathbb{N}} \in \mathbb{N}^{\mathbb{N}}$ and each $k \in \mathbb{N}$, set $I_k(\alpha) = \{\beta \in \mathbb{N}^{\mathbb{N}} : \beta_i = \alpha_i \text{ for } i = 1,..., k\}$. In fact $I_k(\alpha)$ is completely defined by the finite subset $\{\alpha_1,...,\alpha_k\}$ of $\mathbb{N}$. So the family $\{I_k(\alpha) : k \in \mathbb{N}, \alpha \in \mathbb{N}^{\mathbb{N}}\}$ is countable.
\footnote{This stems from the fact that there exist only a finite number of ways to select a finite subset of $\mathbb{N}$ from $\mathbb{N}^{\mathbb{N}}$.} Let $\mathbf{M} \subseteq \mathbb{N}^{\mathbb{N}}$ and $\mathcal{U} = \{U_\alpha : \alpha \in \mathbf{M}\}$ be an $\mathbf{M}$-decreasing family of subsets of a set $\Omega$. Then we define the countable family $\mathcal{D}_\mathcal{U}$ of subsets of $\Omega$ by
\[
\mathcal{D}_\mathcal{U} = \{D_{k}(\alpha) : \alpha \in \mathbf{M}, k \in \mathbb{N}\}, \text{ where } D_{k}(\alpha) = \bigcap_{\beta \in I_k(\alpha) \cap \mathbf{M}} U_\beta,
\]
and say that $\mathcal{U}$ satisfies the \textit{condition} $\mathbf{D}$ if $U_\alpha = \bigcup_{k \in \mathbb{N}} D_k(\alpha)$ for every $\alpha \in \mathbf{M}$. Let $x$ be a point in a space $X$. We say that $X$ has a \textit{small base at $x$} if there exists an $\mathbf{M}_x$-decreasing base at $x$ for some $\mathbf{M}_x \subseteq \mathbb{N}^{\mathbb{N}}$.

A space $X$ has \textit{the strong Pytkeev property} \cite{Tsa} if for each $x \in X$, there exists a countable family $\mathcal{D}$ of subsets of $X$, such that for each neighborhood $U$ of $x$ and each $A \subseteq X$ with $x \in \overline{X \setminus A}$, there is $D \in \mathcal{D}$ such that $D \subseteq U$ and $D \cap A$ is infinite.
Gabriyelyan and Kakol demonstrated in \cite[Theorem 1.13]{Gab} that a Baire topological group is metrizable if and only if it possesses a countable $cn$-character. We aim to generalize this result to quotient spaces by focusing on neutral strong subgyrogroups.

\begin{theorem}
Let $H$ be a closed neutral strong subgyrogroup of a strongly topological gyrogroup $G$.
If $G/H$ has the Baire property, then the following are equivalent:
    \begin{enumerate}
        \item $G/H$ is metrizable.
        \item $G/H$ has the strong Pytkeev property.
        \item $G/H$ has countable $ck$-character.
        \item $G/H$ has countable $cn$-character.
        \item $G/H$ has an $\omega^\omega$-base satisfying the condition ($\mathbf{D}$).
    \end{enumerate}
\end{theorem}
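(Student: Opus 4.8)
The plan is to funnel all five conditions into first-countability at the single point $0^\ast=p(0)$ and then apply the metrization theorem. Since the quotient map $p\colon G\to G/H$ is open and continuous and $G/H$ is homogeneous (Proposition \ref{pro5.5}), every point exhibits the same local behaviour, so it is enough to control $0^\ast$; and by Theorem \ref{the1.1s} the word ``metrizable'' may be replaced throughout by ``first-countable''. Thus the whole theorem reduces to showing that, under the Baire hypothesis, each of (2)--(5) forces $0^\ast$ to possess a countable neighbourhood base.

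I would first dispatch the implications that need neither the Baire hypothesis nor the gyrogroup structure. The chain (1)$\Rightarrow$(2)$\Rightarrow$(3)$\Rightarrow$(4) is purely topological: a metrizable space is first-countable and hence has the strong Pytkeev property, and strong Pytkeev $\Rightarrow$ countable $ck$-character $\Rightarrow$ countable $cn$-character are standard. For (1)$\Rightarrow$(5), starting from a countable decreasing base $\{V_n:n\in\omega\}$ at $0^\ast$ one sets $U_\alpha=V_{\alpha(1)}$; this family is $\mathbb{N}^{\mathbb{N}}$-decreasing, is a base, and satisfies condition $\mathbf{D}$ because $D_k(\alpha)=U_\alpha$ for every $k\ge 1$, so $\bigcup_k D_k(\alpha)=U_\alpha$. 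Conversely (5)$\Rightarrow$(4): if $\{U_\alpha\}$ is an $\omega^\omega$-base satisfying $\mathbf{D}$, then the countable family $\mathcal{D}_\mathcal{U}=\{D_k(\alpha)\}$ is a $cn$-network at $0^\ast$, since for any neighbourhood $O$ one picks $U_\alpha\subseteq O$ and uses $U_\alpha=\bigcup_k D_k(\alpha)$ to see that $\bigcup\{D\in\mathcal{D}_\mathcal{U}:D\subseteq O\}$ is a neighbourhood. These arrows close the diagram except for a single return arrow to (1).

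The substance is therefore (4)$\Rightarrow$(1), and it is here that the Baire property and the gyrogroup structure must be combined. Fix a countable $cn$-network $\{N_k:k\in\omega\}$ at $0^\ast$ and write $T_k=p^{-1}(N_k)$. For an arbitrary open neighbourhood $O$ of $0^\ast$ the $cn$-property makes $\bigcup\{N_k:N_k\subseteq O\}$ a neighbourhood of $0^\ast$; as $G/H$ is Baire this set is non-meager, and being a countable union at least one $N_k\subseteq O$ is non-meager. Passing to the closure $\overline{N_k}$ (which is non-meager and has the Baire property) and using Lemma \ref{lem5.12} to identify $\overline{T_k}=p^{-1}(\overline{N_k})$, the aim is to prove a Pettis--Steinhaus-type statement in the homogeneous quotient: for such a set sitting inside a sufficiently small neighbourhood of $0^\ast$, the ``difference set'' $p(\ominus\overline{T_k}\oplus\overline{T_k})$ is an open neighbourhood of $0^\ast$. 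The containment of this set inside $O$ is then obtained by the same telescoping already performed in Proposition \ref{prop3.3}: choosing, via neutrality of $H$, a gyr-invariant $V$ with $V\oplus V\subseteq U$ and $H\oplus(V\oplus V)\subseteq U\oplus H$ where $p(U)\subseteq O$, then $W$ with $\ominus(W\oplus H)\subseteq H\oplus V$ from Lemma \ref{lem3.2}, and demanding $N_k\subseteq p(W)$, one telescopes $\ominus\overline{T_k}\oplus\overline{T_k}$ into $U\oplus H$ using gyr-invariance and Proposition \ref{prop2.10s}. Since there are only countably many indices, the surviving family $\{p(\ominus\overline{T_k}\oplus\overline{T_k}):k\in\omega\}$ is a countable neighbourhood base at $0^\ast$; hence $0^\ast$ has countable character, $G/H$ is first-countable by homogeneity, and metrizable by Theorem \ref{the1.1s}.

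The main obstacle is precisely this Pettis--Steinhaus mechanism, which must be run without a genuine group operation. One must argue that a non-meager set with the Baire property cannot be displaced off itself by arbitrarily small motions, so that $\ominus\overline{T_k}\oplus\overline{T_k}$ absorbs a neighbourhood of the identity, with the group translations replaced by the homogeneity homeomorphisms $h_a$ used in Lemma \ref{lem5.4s} and by the left cosets $g\oplus H$. Keeping this category argument algebraically honest requires the gyro-associative law for gyr-invariant sets (Proposition \ref{prop2.10s}), the coaddition identity of Proposition \ref{the3.22s}, and repeated use of the neutrality of $H$ to interchange $H\oplus V$ with $V\oplus H$; verifying that these substitutions preserve both the ``non-meager $\Rightarrow$ difference set is a neighbourhood'' step and the containment in $O$ is the genuine technical work, while every other implication in the cycle is formal.
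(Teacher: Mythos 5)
Your overall architecture coincides with the paper's: dispatch (1)$\Rightarrow$(2)$\Rightarrow$(3)$\Rightarrow$(4), (1)$\Rightarrow$(5), (5)$\Rightarrow$(4) as formal/standard facts, reduce everything to (4)$\Rightarrow$(1), and prove that implication by producing, inside any prescribed neighborhood $O$ of $0^*$, a set whose preimage $T$ under $p$ has $p(\ominus\overline{T}\oplus\overline{T})$ a neighborhood of $0^*$, then telescoping it into $O$ via neutrality, Lemma \ref{lem3.2} and Proposition \ref{prop2.10s}, and finishing with homogeneity and Theorem \ref{the1.1s}. The only routing difference is harmless and even slightly more elementary on your side: the paper first converts countable $cn$-character into a small base satisfying condition $(\mathbf{D})$ via \cite[Theorem 1.12]{Gab} and applies the Baire property to the decomposition $U_\alpha=\bigcup_{k}D_k(\alpha)$, whereas you apply Baire directly to the union of the $cn$-network members lying inside a small neighborhood (that union is a neighborhood, hence non-meager, hence some member $N_k$ is non-meager).

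The genuine gap is at the crux, which you explicitly leave unproven: you announce as ``the main obstacle'' a Pettis--Steinhaus-type theorem (non-meager set with the Baire property has difference set absorbing a neighborhood of the identity), to be established in the quotient ``without a genuine group operation'' via the homeomorphisms $h_a$, and you never carry this out. This is both a missing step and a misdirected plan: such a Pettis theorem in $G/H$ would be strictly harder than the theorem you are proving, and it is unnecessary, because you have already passed to closures. A closed set is either nowhere dense or has nonempty interior, so the non-meager closed set $\overline{N_k}$ satisfies $\overline{N_k}^{\,\circ}\neq\emptyset$; then $\overline{T_k}=p^{-1}(\overline{N_k})$ (Lemma \ref{lem5.12}) contains the nonempty open set $p^{-1}(\overline{N_k}^{\,\circ})$, and picking $t\in \overline{T_k}^{\,\circ}$, the set $\ominus t\oplus \overline{T_k}^{\,\circ}$ is an open neighborhood of $0$ contained in $\ominus\overline{T_k}\oplus\overline{T_k}$, since left translations of a topological gyrogroup are homeomorphisms; openness of $p$ then makes $p(\ominus\overline{T_k}\oplus\overline{T_k})$ a neighborhood of $0^*$. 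This elementary observation is exactly what the paper's proof uses; without it (or the unproved Pettis substitute) your countable family is never shown to consist of neighborhoods of $0^*$, and the argument does not close. Two smaller repairs: since the telescoping bounds $\ominus T_k\oplus T_k\subseteq U\oplus H$ before closures, you must start with $\overline{p(U)}\subseteq O$ (available by regularity of $G/H$, Corollary \ref{cor2.17}) and use $\overline{\ominus T_k}\oplus\overline{T_k}\subseteq\overline{\ominus T_k\oplus T_k}$ from continuity of $\oplus$, as the paper does.
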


\begin{proof}
It is evident that (1) $\Rightarrow$ (2) $\Rightarrow$ (3) $\Rightarrow$ (4) and (1) $\Rightarrow$ (5). By \cite[Theorem 1.12]{Gab}, we also have (5) $\Rightarrow$ (4). Therefore, it suffices to demonstrate that (4) $\Rightarrow$ (1).

Let $p: G \rightarrow G/H$ denote the natural quotient mapping, and $0^* = p(0)$, where 0
is the identity in $G$.
According to \cite[Theorem 1.12]{Gab}, there exists a small base $\{U_{\alpha} : \alpha \in \mathbf{M}\}$ at $0^*$ that satisfies the condition ($\mathbf{D}$). For each subset $K \subseteq G/H$, define $T_K = p^{-1}(K)$.
We aim to show that $\{p(\ominus \overline{T_{D_k(\alpha)}}\oplus \overline{T_{D_k(\alpha)}})^{\circ}: \alpha \in \mathbf{M}, k \in \mathbb{N}\}$ forms an open neighborhood base of $0^*$ in $G/H$. Let $O$ be an open neighborhood of $0^*$ in $G/H$.
Since $G$ is a strongly topological gyrogroup, there is a gyr-invariant
neighborhood $W$ of 0 such that $\overline{p(W)} \subseteq O$.
Since $H$ is neutral, there exists a gyr-invariant
neighborhood $V$ satisfying $H\oplus (V\oplus V) \subseteq W\oplus H$. Choose a gyr-invariant
neighborhood $V'$ such that $V' \subseteq W \cap V$. Consequently, there exists $\alpha \in \mathbf{M}$ for which $U_{\alpha} = \bigcup_{k \in \mathbb{N}} D_k(\alpha) \subseteq p(V'\oplus H)$.
Because $U_{\alpha}^{\circ}$ is open in $G/H$ and $G/H$ is a Baire space, there exists $k \in \mathbb{N}$ such that $(U_{\alpha}^{\circ} \cap \overline{D_k(\alpha)})^{\circ} \neq \emptyset$. This implies $\overline{D_k(\alpha)}^{\circ} \neq \emptyset$, and thus $p^{-1}(\overline{D_k(\alpha)}^{\circ}) \neq \emptyset$. By Lemma \ref{lem5.12}, we conclude that $\overline{T_{D_k(\alpha)}}^{\circ} \neq \emptyset$.
Therefore, $\ominus \overline{T_{D_k(\alpha)}}\oplus \overline{T_{D_k(\alpha)}}$ is a neighborhood of $0$ in $G$, and $p(\ominus \overline{T_{D_k(\alpha)}}\oplus \overline{T_{D_k(\alpha)}})$ is a neighborhood of $0^*$ in $G/H$.
Then
\begin{align*}
&\ominus T_{D_k(\alpha)}\oplus T_{D_k(\alpha)}
\\&\subseteq \ominus((V'\oplus H)\oplus H)\oplus((V'\oplus H)\oplus H)
\\&= \ominus(V'\oplus H)\oplus(V'\oplus H)
\\&\subseteq (H\oplus V)\oplus(V'\oplus H)\quad\text{by Lemma \ref{lem3.2}~}
\\&\subseteq (H\oplus V)\oplus(V\oplus H)
\\&=(H\oplus (V\oplus V))\oplus H  \quad\text{by Proposition \ref{prop2.10s}~}
\\&\subseteq(W\oplus H)\oplus H
\\&=W\oplus H.
\end{align*}
We can derive that $\overline{\ominus T_{D_k(\alpha)}}\oplus\overline{T_{D_k(\alpha)}} \subseteq \overline{W\oplus H}$. Consequently, we demonstrate that $0^* \in p(\overline{\ominus T_{D_k(\alpha)}}\oplus\overline{T_{D_k(\alpha)}})^{\circ} \subseteq \overline{p(W\oplus H)} \subseteq \overline{p(W)} \subseteq O$. Then $G/H$ is first-countable. By Theorem \ref{the1.1s}, it follows that $G/H$ is metrizable.
\end{proof}

\section{Acknowledgments}
We are immensely thankful to the reviewers for their extensive feedback and suggestions on our paper, and for their dedicated efforts to improve its overall quality.
\vskip0.9cm

\end{document}